\let\oldtocsection=\tocsection
\let\oldtocsubsection=\tocsubsection
\renewcommand{\tocsection}[2]{\hspace{0em}\oldtocsection{#1}{#2}}
\renewcommand{\tocsubsection}[2]{\hspace{1em}\oldtocsubsection{#1}{#2}}
\tikzset{node distance=3cm, auto}
\def\@secnumfont{\bfseries}
\def\section{\@startsection{section}{1}%
  \z@{.7\linespacing\@plus\linespacing}{.5\linespacing}%
  {\normalfont\Large\bfseries}}
\def\subsection{\@startsection{subsection}{2}%
  \z@{.5\linespacing\@plus.7\linespacing}{-.5em}%
  {\normalfont\large\bfseries}}
  \def\subsubsection{\@startsection{subsubsection}{3}%
  \z@{.5\linespacing\@plus.7\linespacing}{-.5em}%
  {\normalfont\bfseries}}
\theoremstyle{plain}
\newtheorem{thm}{Theorem}[section]
\newtheorem{lemma}[thm]{Lemma}
\newtheorem{prop}[thm]{Proposition}
\newtheorem{cor}[thm]{Corollary}
\newtheorem{question}[thm]{Question}
\newtheorem{problem}[thm]{Problem}
\newtheorem{observation}[thm]{Observation}
\newtheorem{claim}[thm]{Claim}
\newtheorem{addendum}[thm]{Addendum}
\newtheorem{example}[thm]{Example}
\newtheorem{remark}[thm]{Remark}
\newtheorem{definition}[thm]{Definition}
\theoremstyle{definition}
\numberwithin{figure}{section}
\numberwithin{table}{section}
\newcommand{\NI}{{\noindent}}
\newcommand{\la}{{\lambda}}
\newcommand{\Z}{\mathbb{Z}}
\newcommand{\F}{\mathbb{F}}
\newcommand{\R}{\mathbb{R}}
\newcommand{\Q}{\mathbb{Q}}
\newcommand{\C}{\mathbb{C}}
\newcommand{\CP}{\mathbb{CP}}
\newcommand{\D}{\mathbb{D}}
\newcommand{\K}{\mathbb{K}}
\renewcommand{\sc}{\op{SC}}
\newcommand{\eps}{\varepsilon}
\newcommand{\calA}{\mathcal{A}}
\newcommand{\calL}{\mathcal{L}}
\renewcommand{\bar}{\mathcal{B}}
\newcommand{\bdy}{\partial}
\newcommand{\Li}{\mathcal{L}_\infty}
\newcommand{\calM}{\mathcal{M}}
\newcommand{\wh}{\widehat}
\newcommand{\wt}{\widetilde}
\newcommand{\ovl}{\overline}
\newcommand{\op}[1]{{\operatorname{#1}}}
\newcommand{\e}{\eps}
\newcommand{\sh}{\op{SH}}
\newcommand{\chlin}{\op{C}\mathbb{H}_{\op{lin}}}
\newcommand{\cclin}{\op{CH}_{\op{lin}}}
\newcommand{\whcclin}{\wh{\op{CH}}_{\op{lin}}}
\newcommand{\cz}{{\op{CZ}}}
\newcommand{\ind}{\op{ind}}
\newcommand{\Op}{\mathcal{O}p}
\newcommand{\nil}{\varnothing}
\newcommand{\sss}{\vspace{2.5 mm}}
\renewcommand{\leqq}{\preceq}
\newcommand{\aug}{\e}
\newcommand{\triv}{\tau_0}
\newcommand{\m}{\mathfrak{m}}
\newcommand{\gw}{\op{GW}}
\renewcommand{\lll}{\Langle}
\newcommand{\rrr}{\Rangle}
\newcommand{\T}{\mathcal{T}}
\newcommand{\TT}{\T^{\bullet}}
\newcommand{\simp}{s}
\renewcommand{\bar}{\mathcal{B}}
\newcommand{\id}{\mathbb{1}}
\newcommand{\shookrightarrow}{\overset{S}\hookrightarrow}
\newcommand{\es}{\overset{L}\hookrightarrow}
\newcommand{\ws}{\overset{W}\hookrightarrow}
\newcommand{\calS}{\mathcal{S}}
\renewcommand{\a}{\mathfrak{a}}
\renewcommand{\b}{\mathfrak{b}}
\newcommand{\mr}{\mathring}
\newcommand{\CC}{\mathcal{C}}
\newcommand{\crit}{\op{crit}}
\newcommand{\ww}{\mathbb{w}}
\newcommand{\formal}{F}
\newcommand{\flex}{\op{Flex}}
\newcommand{\wein}{\mathfrak{W}}
\newcommand{\G}{\mathbb{G}}
\renewcommand{\F}{\mathbb{F}}
\newcommand{\I}{\mathbb{I}}
\newcommand{\calJ}{\mathcal{J}}
\newcommand{\Int}{\op{Int}\,}
\newcommand{\wfuk}{\mathcal{W}}
\newcommand{\vk}{\varkappa}
\newcommand{\dashover}[2][\mathop]{#1{\mathpalette\df@over{{\dashfill}{#2}}}}
\newcommand{\fillover}[2][\mathop]{#1{\mathpalette\df@over{{\solidfill}{#2}}}}
\newcommand{\df@over}[2]{\df@@over#1#2}
\newcommand\df@@over[3]{%
  \vbox{
    \offinterlineskip
    \ialign{##\cr
      #2{#1}\cr
      \noalign{\kern1pt}
      $\m@th#1#3$\cr
    }
  }%
}
\newcommand{\dashfill}[1]{%
  \kern-.5pt
  \xleaders\hbox{\kern.5pt\vrule height.4pt width \dash@width{#1}\kern.5pt}\hfill
  \kern-.5pt
}
\newcommand{\dash@width}[1]{%
  \ifx#1\displaystyle
    2pt
  \else
    \ifx#1\textstyle
      1.5pt
    \else
      \ifx#1\scriptstyle
        1.25pt
      \else
        \ifx#1\scriptscriptstyle
          1pt
        \fi
      \fi
    \fi
  \fi
}
\newcommand{\solidfill}[1]{\leaders\hrule\hfill}
\begin{document}

\title{On the embedding complexity of Liouville manifolds}

\begin{abstract}
We define a family  of symplectic invariants which obstruct exact symplectic embeddings
between Liouville manifolds, using the general formalism of linearized contact homology and its $\Li$ structure.
As our primary application, we investigate embeddings between normal crossing divisor complements in complex projective space, giving a complete characterization in many cases. 
Our main embedding results are deduced explicitly from pseudoholomorphic curves, without appealing to Hamiltonian or virtual perturbations.
\end{abstract}

\author{Sheel Ganatra and Kyler Siegel}

\date{\today}

\maketitle

\tableofcontents

\section{Introduction}

\subsection{Context and motivation}\label{subsec:context}

A {\em Liouville domain} is a compact manifold-with-boundary equipped with a primitive one-form $\la$ such that $\omega := d\la$ is symplectic and $\la$ restricts to a positive contact form along the boundary. 
Liouville domains form a nice class of open symplectic manifolds which naturally arise in many geometric contexts, including:
\begin{itemize}
\item unit cotangent disk bundles of closed Riemannian manifolds 
\item sufficiently large compact pieces of smooth complex affine varieties
\item regular sublevel sets of Stein manifolds.
\end{itemize}
The principal goal of this paper is to develop tools to understand when one Liouville domain is ``larger'' or ``more complicated'' than another.
Specifically, given two Liouville domains $(X,\la)$ and $(X',\la')$ of the same dimension, we seek to understand when there is a {\em Liouville embedding} $X \es X'$.
This consists of a smooth embedding $\iota: X \hookrightarrow X'$ such that $\iota^*\la'$ agrees with $\la$ up to some positive scaling factor and the addition of an exact one-form (see \S\ref{subsubsec:embeddings}). The existence of a Liouville embedding $X \es X'$ is a qualitative notion, depending only on $X$ and $X'$ up to Liouville homotopy.
Equivalently, by attaching an infinite cylindrical end, any Liouville domain $(X,\la)$ can be completed to a (finite type) {\em Liouville manifold} $(\wh{X},\wh{\la})$ (e.g. the completion of a cotangent disk bundle is the full cotangent bundle), and the existence of a Liouville embedding $X \es X'$ is equivalent to having a smooth (but not necessarily proper) embedding $\iota: \wh{X} \hookrightarrow \wh{X'}$ such that $\iota^*(\wh{\la'}) - \wh{\la}$ is exact.

One reason for interest in Liouville embeddings comes from their connection to exact (compact) Lagrangian submanifolds. An {\em exact Lagrangian submanifold} of $(X,\la$) is a half-dimensional submanifold $L \subset X$ equipped with a function $f$ such that $\la|_{L} = df$ (so in particular $\omega|_L \equiv 0$).
The study of exact Lagrangian embeddings has played a prominent role in the symplectic topology literature, going back to Gromov's theorem \cite{gromov1985pseudo} that there are no closed exact Lagrangians in $\C^n$, and Arnold's ``nearby Lagrangian conjecture'' stating that there is a unique closed exact Lagrangian in the cotangent bundle of a closed manifold up to Hamiltonian isotopy.
By a version of the Weinstein neighborhood theorem, a given $L$ admits an exact Lagangian embedding into $(X,\la)$ if and only if there is a Liouville embedding $D^*_\eps L \es X$,
where $D^*_\eps L$ denotes the cotangent $\eps$-disk bundle of $L$ for some Riemannian metric and $\eps > 0$ sufficiently small.
As it turns out, most known examples of Liouville domains (such as $D^*_\eps L$) are a fortiori {\em Weinstein domains}, meaning they carry Morse functions suitably compatible with the Liouville structure.
A Weinstein domain deformation retracts onto its skeleton, which is an isotropic (but possibly singular)
closed subset; in the case of $D^*_\eps L$ with its canonical Liouville structure, the resulting skeleton is $L$ itself.
Hence, we intuitively view a more general Weinstein domain as the cotangent disk bundle of its singular skeleton, and Liouville embeddings of Weinstein domains as singular generalizations of exact Lagrangian embeddings.  

Liouville embeddings also constitute a primary class of morphisms under which functoriality holds
for the most widely studied symplectic invariants of Liouville domains, for instance symplectic cohomology, wrapped Fukaya categories and various other invariants built from the theory of pseudoholomorphic curves (see e.g. \cite{Seidel_biased_view,abouzaid2010open}).
Given a Liouville domain $X$ and a chosen ground ring $\K$, its symplectic cohomology $\sh(X)$ is, among other things, a unital $\K$-algebra whose isomorphism type depends only on $\wh{X}$ up to symplectomorphism.\footnote{In more detail, any two finite type Liouville manifolds which are symplectomorphic are also exact symplectomorphic by \cite[Lem. 11.2]{cieliebak2012stein}, and hence their symplectic cohomologies are isomorphic by a consequence of Viterbo functoriality as in \cite[\S7b]{Seidel_biased_view}.}
A Liouville embedding $X \es X'$ induces a transfer map $\sh(X') \rightarrow \sh(X)$ of unital $\K$-algebras.
We have also $\sh(B^{2n}) = 0$ and $\sh(D^*Q) \cong H_*(\calL Q)$,\footnote{More precisely, this isomorphism always holds over $\K=\Z/2$, and it holds for more general $\K$ if $Q$ is Spin, whereas the general case necessitates using a twisted coefficient system.} where $\calL Q$ denotes the free loop space of $Q$.
Combining these properties gives an elegant proof of Gromov's theorem as follows. Given a hypothetical exact Lagrangian $L \subset \C^n$, the transfer map $\sh(\C^n) \rightarrow \sh(D^*_\eps L)$ is necessarily the zero map. This forces the unit in $\sh(D^*_\eps L)$ to be zero, and hence $H_*(\calL L) =0$, but this is never the case.

The argument in the preceding paragraph can be formalized into the following simple but surprisingly powerful observation:
\begin{observation}\label{obs:vanishingSH}
Given a Liouville embedding $X \es X'$, if $\sh(X') = 0$, then we must also have $\sh(X) = 0$.
\end{observation}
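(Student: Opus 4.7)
The plan is to invoke the functoriality of symplectic cohomology under Liouville embeddings, as discussed in the paragraph immediately preceding the observation. The key inputs are exactly the two properties already recalled there: (i) $\sh$ assigns to any Liouville domain a unital $\K$-algebra, and (ii) a Liouville embedding $X \es X'$ induces a transfer map of unital $\K$-algebras $\sh(X') \to \sh(X)$. The word ``unital'' is doing all the work — the transfer is required to send the unit $1_{X'} \in \sh(X')$ to the unit $1_X \in \sh(X)$.

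First I would set up the unit chase: under the hypothesis $\sh(X') = 0$, the unit $1_{X'}$ is necessarily the zero element of $\sh(X')$. Applying the transfer map $\sh(X') \to \sh(X)$ and using the fact that it is a unital ring homomorphism, one obtains $1_X = 0$ in $\sh(X)$.

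Next I would deduce the vanishing of $\sh(X)$ from the vanishing of its unit: for any class $a \in \sh(X)$, we have $a = a \cdot 1_X = a \cdot 0 = 0$, so $\sh(X) = 0$. This is the standard fact that a unital ring in which $1=0$ is the zero ring.

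There is no real obstacle here beyond quoting the existence and unitality of the transfer map, which the preceding discussion has already attributed to the literature (e.g.\ \cite{Seidel_biased_view, abouzaid2010open}). The only subtlety worth flagging in a careful write-up would be to confirm that the transfer map is defined at the level of $\sh$ as a unital $\K$-algebra (not merely as a $\K$-module), and that it genuinely depends only on the Liouville embedding up to the natural equivalences; both are standard and already invoked in the text above.
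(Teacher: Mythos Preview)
Your proposal is correct and is exactly the argument the paper has in mind: the observation is stated without a formal proof, but the preceding paragraph makes clear that it is precisely this unit-chasing via the unital transfer map $\sh(X') \to \sh(X)$ that underlies it. There is nothing to add.
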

\NI For example, every Weinstein domain is diffeomorphic to one which is {\em flexible} \cite[\S11.8]{cieliebak2012stein}, with the ball and more generally subcritical Weinstein domains arising as special cases.
Since flexible Weinstein domains have vanishing symplectic cohomology (see \cite[\S3.3]{murphy2018subflexible}), we immediately extend Gromov's theorem to find that there are no exact Lagrangians in any flexible Weinstein domain.

\sss

However, Observation~\ref{obs:vanishingSH} is rather insufficient in situations where $\sh(X)$ and $\sh(X')$ are both vanishing or both nonvanishing, especially since the transfer map is generally neither injective nor surjective.
For instance, let $X_k^{2n}$ denote the complement of a small neighborhood of $k$ generic hyperplanes in $\CP^n$. As we explain in \S\ref{subsec:div_compl}, $X_k^{2n}$ has a canonical Weinstein structure.
Concretely, we can ask:
\begin{problem}\label{prob:hyperplane_comp}
For which $k,k' \in \Z_{\geq 1}$ is there a Liouville embedding $X_k^{2n} \es X_{k'}^{2n}$?
\end{problem}
\NI We make a few preliminary observations:
\begin{itemize}
\item For $k \leq n$, $X_k^{2n}$ is subcritical, in fact its symplectic completion is $(\C^*)^{k-1}\times \C^{n-k+1}$, and hence we have $\sh(X_k^{2n}) = 0$. From this it is straightforward to produce a Liouville (in fact Weinstein) embedding of $X_{k'}^{2n}$ into $X_k^{2n}$ whenever $k,k' \leq n$.
\item There is a spectral sequence \cite{ganatra2020symplectic, mcleanslides} for computing the symplectic cohomology of any ample simple normal crossing divisor complement from the ordinary cohomology of various combinatorial strata defined by the normal crossings configuration. When $k \geq n+1$, this spectral sequence degenerates (see \cite[Thm 1.4 and Example 5.1]{ganatra2020symplectic}) and in particular $\sh(X_k^{2n}) \neq 0$ for any coefficient field $\K$. Therefore, by Observation~\ref{obs:vanishingSH} there is no Liouville embedding $X_{k'}^{2n} \es X_{k}^{2n}$ when $k' \geq n+1$ and $k \leq n$.

\item There is a Weinstein embedding $X_k^{2n} \ws X_{k'}^{2n}$ whenever $k < k'$ (see \S\ref{subsubsec:constructions}).
There is also a symplectic (and in particular smooth) embedding from $X_k^{2n}$ into $X_{k'}^{2n}$ for $k > k'$, given by adding back in some of the hyperplanes.
\end{itemize}
We are left to wonder about Liouville embeddings of $X_{k'}^{2n}$ into $X_k^{2n}$ in the case $k' > k \geq n+1$, and more generally:
\begin{question}\label{question:complexity}
Is there a natural notion of ``complexity'' of Liouville domains, such that more complicated Liouville domains cannot Liouville embed into less complicated ones?
\end{question}

We point out right away that there are already several interesting partial answers to Question~\ref{question:complexity} appearing in the literature, though these approaches are not sufficient (to our knowledge) to solve Problem~\ref{prob:hyperplane_comp} (see the discussion in \S\ref{subsec:obs_higher}).
For one, Abouzaid--Seidel \cite{abouzaid2010altering} introduced a ``homological recombination'' construction which modifies a given Weinstein domain so as to kill its symplectic cohomology when the characteristic of $\K$ belongs to a chosen set of primes, and otherwise leaving $\sh$ intact. 
As a corollary, by appealing to Observation~\ref{obs:vanishingSH}, we can find e.g. an infinite sequence of Weinstein domains $W_1,W_2,W_3,\dots$, all diffeomorphic to $B^6$, such that $W_i$ does not Liouville embed into $W_j$ unless $i \leq j$. According to \cite{lazsyl}, we can also arrange that $W_i$ does admit a Weinstein embedding into $W_j$ for $i < j$.

In a different direction, which lies closer to the heart of this paper, we have the notion of {\em dilation} \cite{seidel2012symplectic}, and its generalizations and cousins (see e.g. \cite{zhao2016periodic,Zhou_semidilation,li2019exact}).
The basic observation here is that symplectic cohomology $\sh$ has an $S^1$-equivariant analogue $\sh_{S^1}$, which enjoys a more refined version of 
Observation~\ref{obs:vanishingSH} (see \S\ref{subsec:obs_cyls})
For example, \cite{Zhou_semidilation} constructs Brieskorn varieties having $k$-dilations but not $(k-1)$-dilations for any fixed $k \in \Z_{\geq 0}$, and this translates into a non-existence result for Liouville embeddings.
Essentially the same structure is exploited by Gutt--Hutchings to construct symplectic capacities in \cite{Gutt-Hu}. In fact, although these capacities were developed as quantitative invariants, they become qualitative if one only remembers whether each capacity is finite or infinite. 
As we will explain, these notions are closely related to the linear version of the invariants we define in \S\ref{sec:invariants}, whereas our more general invariants parallel the higher symplectic capacities defined in \cite{HSC}.

\sss

Problem~\ref{prob:hyperplane_comp} naturally fits into a wider framework as follows.
Fix a positive integer $n$.
Let $\vec{d}  = (d_1,\dots,d_k) \in \Z^k_{\geq 1}$ denote a tuple of positive integers for some $k \in \Z_{\geq 1}$. 
We denote by $X^{2n}_{\vec{d}}$ the natural Weinstein domain given by the complement of a small neighborhood of $k$ smooth hypersurfaces of degrees $d_1,\dots,d_k$ in general position in $\CP^n$.
Notably, this depends only on $\vec{d}$ and is independent of all other auxiliary choices up to Weinstein deformation equivalence (see \S\ref{subsec:div_compl}).
Similarly, put $\vec{d'} = (d_1',\dots,d_{k'}') \in \Z^{k'}_{\geq 1}$ for some $k' \in \Z_{\geq 1}$, and denote the corresponding Weinstein domain by $X^{2n}_{\vec{d'}}$.
Note that with this notation we have $X_k^{2n} = X^{2n}_{(\underbrace{1,\dots,1}_k)}$.

\begin{problem}\label{prob:hyp}
For which tuples $\vec{d}$ and $\vec{d'}$ is there a symplectic / Liouville / Weinstein embedding of $X^{2n}_{\vec{d}}$ into $X^{2n}_{\vec{d'}}$?
\end{problem}

\subsection{Main results}

The following result is representative of the techniques of this paper:
\begin{thm}\label{thm:main_liouville}
Fix $n \in \Z_{\geq 1}$ and tuples $\vec{d} = (d_1,\dots,d_k) \in \Z_{\geq 1}^k$ and $\vec{d'} = (d_1',\dots,d_{k'}') \in \Z_{\geq 1}^{k'}$ with $\sum\limits_{i=1}^kd_i,\sum\limits_{i=1}^{k'}d_i' \geq n+1$.
Assume that we have $\sum\limits_{i=1}^{k'} d_i' < 2\sum\limits_{i=1}^{k}d_i - n -1$.
Then there is a Liouville embedding $X^{2n}_{\vec{d}} \es X^{2n}_{\vec{d'}}$ if and only if $\vec{d} \leqq \vec{d'}$.
\end{thm}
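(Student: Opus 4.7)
The proof divides into an ``if'' direction (constructing a Liouville embedding when $\vec{d}\leqq\vec{d'}$) and an ``only if'' direction (obstructing the embedding when $\vec{d}\not\leqq\vec{d'}$). My plan is to handle the forward direction by a direct geometric construction and to invest the bulk of the effort into the obstruction, which is where the $L_\infty$-capacities developed in this paper come into play.

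For the ``if'' direction, I would factor $\vec{d}\leqq\vec{d'}$ into two elementary moves: appending an extra entry, and increasing a single entry in place. The first move generalizes the Weinstein embedding $X_k^{2n}\ws X_{k'}^{2n}$ from \S\ref{subsec:context}: starting from a representative of $X^{2n}_{\vec{d}}$ in $\CP^n$, I add a further hypersurface of degree $d_{k+1}$ in general position and disjoint from a chosen Weinstein core, so that passing to its complement and applying a Liouville homotopy produces the embedding into $X^{2n}_{\vec{d'}}$. The second move is handled by a degeneration--smoothing argument: choose a degree-$d_i'$ hypersurface that degenerates to the transverse union of the original degree-$d_i$ hypersurface with an auxiliary degree-$(d_i'-d_i)$ hypersurface, and verify that the complement of a small smoothing Liouville-embeds into the complement of the singular configuration.

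For the ``only if'' direction, I would use the family of $L_\infty$-capacities $\mathfrak{g}_A$ produced in the paper, which attach to each Liouville domain $X$ a sequence of action thresholds indexed by combinatorial data $A$, measuring the smallest energy at which certain iterated $L_\infty$-operations on linearized contact homology become nontrivial when paired against a distinguished class such as the unit. These invariants are monotone under Liouville embeddings by construction, so it suffices to exhibit, for each $\vec{d}\not\leqq\vec{d'}$, some $A$ for which $\mathfrak{g}_A(X^{2n}_{\vec{d}}) > \mathfrak{g}_A(X^{2n}_{\vec{d'}})$. Computing $\mathfrak{g}_A$ on $X^{2n}_{\vec{d}}$ proceeds by identifying the Reeb orbits on the contact boundary as linking orbits around each divisor component (with Conley--Zehnder indices that are explicit affine functions of the $d_i$), and by matching the rigid pseudoholomorphic curves in the completion $\wh{X^{2n}_{\vec{d}}}$ with rational curves in $\CP^n$ meeting the divisor components to prescribed contact orders. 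This should recover the values of $\mathfrak{g}_A$ as combinatorial invariants of the multiset of $d_i$'s that detect exactly the partial order~$\leqq$.

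The main obstacle is the moduli space analysis needed to compute these $L_\infty$-operations, which in full generality requires abstract virtual perturbations. The strict inequality $\sum d_i' < 2\sum d_i - n - 1$ is designed precisely to sidestep this: a dimension count shows that it forces the virtual dimension of every stratum of the compactified moduli space relevant to the decisive $L_\infty$-operation (nodal, multiply covered, or lower-energy configurations) other than the top stratum of embedded curves to be strictly negative. Consequently, for generic tame $J$ these problem strata are empty, transversality is automatic, and after SFT-type neck-stretching along $\partial X^{2n}_{\vec{d}}$ the count reduces to a classical enumerative count of rational curves in $\CP^n$ with prescribed tangencies along a hyperplane configuration, for which closed-form expressions are available. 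Carrying this reduction out carefully, and extracting from the resulting numbers the sharp strict inequality between $\mathfrak{g}_A(X^{2n}_{\vec{d}})$ and $\mathfrak{g}_A(X^{2n}_{\vec{d'}})$, is the technical heart of the argument and is where the numerical hypothesis is used sharply.
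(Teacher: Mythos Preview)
Your proposal has genuine gaps in both directions, though the ``only if'' direction is where the serious problem lies.

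For the ``if'' direction: your two elementary moves do not match the definition of $\leqq$. The partial order is generated by \emph{duplication} (append a copy of an \emph{existing} entry) and \emph{combination} (replace two entries $d_i,d_j$ by their sum $d_i+d_j$). ``Increasing a single entry in place'' is not a valid move; for instance $(2)\not\leqq(3)$. Your degeneration argument in fact realizes the combination move (the complement of the singular fiber has \emph{both} pieces as divisor components), so the geometry is right but your bookkeeping of what it proves is off.

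For the ``only if'' direction, there are two substantive problems. First, the invariants $\G\lll\T^m p\rrr$ are integer-valued (minimal number of positive ends), not action-valued capacities; action would be useless here since Liouville embeddings permit arbitrary rescaling. More importantly, Theorem~\ref{thm:main_G_computation} gives $\G\lll\T^{n-1}p\rrr(X^{2n}_{\vec{d}})=\sum_i d_i$, so monotonicity of this invariant yields only $\sum_i d_i\leq\sum_i d_i'$ (Corollary~\ref{cor:G_obstructions}), which is far weaker than $\vec{d}\leqq\vec{d'}$. No family of such scalar invariants indexed by $m$ will recover the partial order: the finer information lives in \emph{which} Reeb orbits occur as asymptotics, not just how many. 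The paper instead proves the combinatorial Theorem~\ref{thm:main_combinatorial}, which records from the neck-stretching limit the tuples $\vec{x}_1,\dots,\vec{x}_l$ and $\vec{y}_1,\dots,\vec{y}_l$ together with the induced map $\Phi:H_1(X_{\vec d}^{2n})\to H_1(X_{\vec{d'}}^{2n})$, and only then extracts $\vec{d}\leqq\vec{d'}$ from this structure.

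Second, you have misidentified the role of the hypothesis $\sum_i d_i'<2\sum_i d_i-n-1$. It is \emph{not} used for transversality: the arguments in \S\ref{sec:computationsII} establish regularity of the relevant simple curves without it, and the proof of Theorem~\ref{thm:main_combinatorial} holds unconditionally. The hypothesis enters only in the final combinatorial step: from $q(\sum_i d_i-n-1)\leq l-n-1\leq\sum_i d_i'-n-1$ it forces $q=1$, which in turn forces $l=\sum_i d_i$ and each $\vec{x}_j$ to be a standard basis vector, whence $\vec{d}\leqq\vec{d'}$ follows by an explicit sequence of duplications and combinations.
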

\NI We will deduce the obstructive part from a more general framework, a synopsis of which is given in \S\ref{subsubsec:obstructions}. The relevant embedding constructions, and in particular the definition of the combinatorial partial order ``$\leqq$'' are summarized in \S\ref{subsubsec:constructions}.

As an illustrative example, combining the above theorem with the observations from in the previous subsection solves Problem~\ref{prob:hyperplane_comp}:
\begin{cor}\label{cor:intro_X_k_emb}
For any $n \in \Z_{\geq 1}$ and $k' > k \geq n+1$, there is no Liouville embedding $X_{k'}^{2n} \es X_{k}^{2n}$.
\end{cor}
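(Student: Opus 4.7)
The plan is to derive the corollary as a direct application of Theorem~\ref{thm:main_liouville}, specialized to the all-ones tuples. Set $\vec{d} = (\underbrace{1,\dots,1}_{k'})$ and $\vec{d'} = (\underbrace{1,\dots,1}_{k})$, so that $X^{2n}_{\vec{d}} = X_{k'}^{2n}$ and $X^{2n}_{\vec{d'}} = X_{k}^{2n}$.

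The first step is to check the numerical hypotheses of Theorem~\ref{thm:main_liouville}. The sum conditions $\sum d_i = k' \geq n+1$ and $\sum d_i' = k \geq n+1$ hold by the standing assumption $k' > k \geq n+1$. For the strict inequality $\sum d_i' < 2\sum d_i - n - 1$, i.e.\ $k < 2k' - n - 1$, we estimate
\[
2k' - n - 1 \;\geq\; 2(k+1) - n - 1 \;=\; k + (k - n) + 1 \;\geq\; k + 2,
\]
using $k' \geq k+1$ and $k \geq n+1$, so the hypothesis is comfortably satisfied.

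The second step is to invoke Theorem~\ref{thm:main_liouville}, which tells us that a Liouville embedding $X_{k'}^{2n} \es X_{k}^{2n}$ exists if and only if $(1^{k'}) \preceq (1^{k})$. It remains to verify that this partial order comparison \emph{fails}. The partial order $\preceq$ is the combinatorial order summarized in \S\ref{subsubsec:constructions}, designed precisely to capture when the explicit Weinstein embedding constructions (essentially, adding back hyperplanes) produce a map $X^{2n}_{\vec{d}} \ws X^{2n}_{\vec{d'}}$. On all-ones tuples these constructions only go from fewer to more hyperplanes (cf.\ the bullet ``there is a Weinstein embedding $X_k^{2n} \ws X_{k'}^{2n}$ whenever $k < k'$''), so $(1^{k'}) \preceq (1^k)$ forces $k' \leq k$, contradicting our standing hypothesis $k' > k$.

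The one step that is not entirely formal is confirming $(1^{k'}) \not\preceq (1^k)$ directly from the definition of $\preceq$; this, however, is a routine combinatorial inspection of the order in \S\ref{subsubsec:constructions}, not a genuine obstacle. All the real content of the corollary — the fact that a Liouville embedding must come from such a combinatorial comparison — is already packaged inside Theorem~\ref{thm:main_liouville}.
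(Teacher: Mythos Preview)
Your proof is correct and follows the same route as the paper, which derives the corollary directly from Theorem~\ref{thm:main_liouville} applied to the all-ones tuples. The one place you leave slightly informal---that $(1^{k'}) \not\leqq (1^k)$ when $k' > k$---is indeed routine: both the combination and duplication moves are non-decreasing on the sum of entries, so $\vec{d} \leqq \vec{d'}$ forces $\sum d_i \leq \sum d_i'$, hence $k' \leq k$.
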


\begin{example}
Consider the case $n=1$ of Problem~\ref{prob:hyperplane_comp}, so that $X_k^{2}$ is the two-sphere minus $k$ open disks.
For $k < k'$, there is a Liouville embedding $X_k^2 \es X_{k'}^2$, given by iteratively attaching Weinstein $1$-handles.
By contrast, there is no Liouville embedding $\iota: X_{k'}^2 \es X_k^2$.
Indeed, given such an embedding, the complement $X_k^2 \setminus \iota(X_{k'}^2)$ would necessarily have at least one component which is disjoint  from $\bdy X_{k}^2$, and this violates Stokes' theorem (c.f. Example~\ref{ex:surfaces}).
\end{example}

\subsubsection{Obstructions}\label{subsubsec:obstructions}

In \S\ref{sec:invariants}, we define for each $m \in \Z_{\geq 0}$ a symplectomorphism invariant $\G \lll \T^m p \rrr$ of Liouville domains which takes values in positive integers and is monotone with respect to Liouville embeddings, i.e.
\begin{align*}
X \es X' \;\;\;\Longrightarrow\;\;\; \G\lll \T^m p \rrr(X) \leq \G\lll \T^m p\rrr(X')
\end{align*}
Heuristically, $\G \lll \T^m p \rrr(X)$ corresponds to the least number of positive ends of a rigid rational curve in $X$ which passes through a generic point $p$ and is tangent to order $m$ to a generic local divisor at $p$.
Whereas this would generally depend on the choices involved, the more precise definition of $\G\lll \T^m p \rrr(X)$ is based on the $\Li$ structure on linearized contact homology $\chlin(X)$.
In \S\ref{sec:computationsI}, we compute this invariant for divisor complements in projective space:
\begin{thm}\label{thm:main_G_computation}
For $n \in \Z_{\geq 1}$ and $\vec{d} = (d_1,\dots,d_k) \in \Z^k_{\geq 1}$ with $\sum_{i=1}^k d_i \geq n+1$, we have 
\begin{align}
\G \lll \T^{n-1} p\rrr(X^{2n}_{\vec{d}}) = \sum_{i=1}^k d_i.  
\end{align}
\end{thm}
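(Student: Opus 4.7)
The plan is to compute $\G\lll \T^{n-1} p \rrr(X^{2n}_{\vec d})$ by establishing matching upper and lower bounds. Both arguments analyze rigid rational curves in the completion $\wh{X^{2n}_{\vec d}}$, which compactify to rational curves in $\CP^n$ intersecting the normal crossings divisor $H = H_1 \cup \cdots \cup H_k$ in a specified contact profile.

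For the upper bound $\G\lll \T^{n-1} p\rrr \leq \sum_i d_i$, I would construct rigid lines in $\CP^n$ realizing this value. A line $\ell \subset \CP^n$ through the generic point $p$ is determined by its direction $v \in \mathbb{P}(T_p\CP^n) \cong \CP^{n-1}$; the condition that $\ell$ be tangent of order $\geq j$ to a chosen generic local divisor $D$ at $p$ translates to the vanishing at $v$ of the degree-$j$ symmetric differential of a local defining function of $D$, a homogeneous polynomial of degree $j$ on $\CP^{n-1}$. Imposing tangency of order $n-1$ cuts out the intersection of hypersurfaces of degrees $1, 2, \dots, n-1$ in $\CP^{n-1}$, a zero-dimensional scheme of length $(n-1)!$ by Bezout. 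Each such line meets $H$ transversely in $\sum_i d_i$ points ($d_i$ on each $H_i$), and puncturing at these intersections gives a rigid rational curve in $\wh{X^{2n}_{\vec d}}$ with $\sum_i d_i$ simple positive ends, yielding the bound.

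For the lower bound $\G\lll \T^{n-1} p\rrr \geq \sum_i d_i$, I would carry out a virtual dimension computation. By SFT compactness, any rigid punctured rational curve $\wt u$ with $s$ positive ends contributing to $\G\lll\T^{n-1}p\rrr$ compactifies to a closed rational curve $u : \CP^1 \to \CP^n$ of some degree $e \geq 1$ with contact profile $(\mu_1, \dots, \mu_s)$ relative to $H$ satisfying $\sum_j \mu_j = e \sum_i d_i$. Rigidity together with the virtual dimension formula and the complex codimension-$(n-1)$ constraints from the point and tangency-$(n-1)$ conditions should yield the identity $s = (n+1) + e(\sum_i d_i - (n+1))$. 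Under the hypothesis $\sum_i d_i \geq n+1$, this gives $s = \sum_i d_i$ when $e = 1$, and $s - \sum_i d_i = (e-1)(\sum_i d_i - (n+1)) \geq 0$ for $e \geq 2$. Hence $s \geq \sum_i d_i$, with equality achieved by the lines of the upper bound.

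The main obstacle will be the precise virtual-dimension and Conley--Zehnder analysis needed to derive the rigidity identity, and in particular ruling out potential contributions from multi-level SFT buildings in the lower bound. Regularity of the line moduli in the upper bound should follow from the ampleness of the hyperplane class on $\CP^n$, while the nonvanishing of the signed count of the $(n-1)!$ lines should follow from a consistent orientation computation at the constraint point $p$.
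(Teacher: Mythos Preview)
Your overall strategy matches the paper's: the proof splits into a lower bound (\S\ref{subsec:lower_bound}) and an upper bound (\S\ref{subsec:upper_bound}), with the lower bound coming from an index computation and the upper bound coming from degree-one curves in $\CP^n$ satisfying the tangency constraint, of which there are indeed $(n-1)!$.

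There is, however, a genuine gap in your upper bound. Recall that $\G\lll\T^{n-1}p\rrr(X)$ is the smallest $l$ such that $t^{n-1}$ lies in $\pi_1(\I^{\leq l}(X))$, where $\I^{\leq l}(X)$ is the image of the \emph{homology} of $\bar^{\leq l}\cclin(X)$ under $\wh{\aug}\lll\TT p\rrr$. Thus exhibiting a nonzero count of rigid curves with $\sum_i d_i$ positive ends asymptotic to $\beta_1^{\times d_1},\dots,\beta_k^{\times d_k}$ is not enough: you must also show that $(\odot^{d_1}\beta_1)\odot\cdots\odot(\odot^{d_k}\beta_k)$ is \emph{closed} under the bar differential. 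The paper does this in Lemma~\ref{lem:bar_cycle} via a separate index and energy argument ruling out index-one anchored curves in the symplectization with those inputs; your proposal does not address this point at all. Relatedly, your direct description of lines is morally the same as the paper's neck-stretching of $\calM_{\CP^n,[L]}\lll\T^{n-1}p\rrr$ along $\bdy X^{2n}_{\vec d}$, but the paper's version yields the additional structure (Lemmas~\ref{lem:rel_hom_es} and \ref{lem:formal_caps}) needed to identify which Reeb orbits actually appear as asymptotes of the limiting planes in $N^{2n}_{\vec d}$ and to verify those plane counts are nonzero.

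Your lower bound is essentially correct but overcomplicated in its framing. You do not need SFT compactness or to worry about multi-level buildings here: since $\aug^l\lll\T^{n-1}p\rrr$ counts index-zero curves, it suffices to show that any \emph{formal} curve in $X^{2n}_{\vec d}$ with $l<\sum_i d_i$ positive ends and a $\lll\T^{n-1}p\rrr$ constraint has negative index (this is Lemma~\ref{lem:too_few_ends}). Your ``compactification to $\CP^n$'' is just capping off each end with its small spanning disk to compute $c_1^{\triv}(u) = q(n+1)$, where $\sum_j\vec v_j = q\vec d$ by first homology. Also, your rigidity identity $s = (n+1) + e(\sum_i d_i - (n+1))$ holds only when every asymptotic orbit has maximal $\delta = n-1$; in general one obtains the inequality $\ind(u) \leq 2l - 2\sum_i d_i$ after using $\delta_i \leq n-1$ and $q\geq 1$, which is precisely what the lower bound requires.
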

\NI As an immediate consequence:
\begin{cor}\label{cor:G_obstructions}
For $n \in \Z_{\geq 1}$, we have $X^{2n}_{(d_1,\dots,d_k)} \not\es X^{2n}_{(d_1',\dots,d_{k'}')}$ whenever $\sum_{i=1}^k d_i > \sum_{i=1}^{k'} d_i' \geq n+1$.
\end{cor}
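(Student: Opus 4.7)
The plan is to argue by contradiction, using only two ingredients already in hand: the monotonicity of the invariant $\G\lll \T^{n-1} p \rrr$ under Liouville embeddings (built into its construction in \S\ref{sec:invariants}) and the explicit computation in Theorem~\ref{thm:main_G_computation}.

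Suppose toward a contradiction that a Liouville embedding $X^{2n}_{(d_1,\dots,d_k)} \es X^{2n}_{(d_1',\dots,d_{k'}')}$ exists. First I would check that the hypotheses of Theorem~\ref{thm:main_G_computation} apply to both tuples: the assumption gives $\sum_{i=1}^{k'} d_i' \geq n+1$ directly, and $\sum_{i=1}^{k} d_i > \sum_{i=1}^{k'} d_i' \geq n+1$ forces $\sum_{i=1}^{k} d_i \geq n+2 \geq n+1$ as well. The monotonicity property then yields
\[
\G\lll \T^{n-1} p \rrr\bigl(X^{2n}_{(d_1,\dots,d_k)}\bigr) \;\leq\; \G\lll \T^{n-1} p \rrr\bigl(X^{2n}_{(d_1',\dots,d_{k'}')}\bigr),
\]
and substituting the values from Theorem~\ref{thm:main_G_computation} converts this into $\sum_{i=1}^{k} d_i \leq \sum_{i=1}^{k'} d_i'$, directly contradicting the standing hypothesis.

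Since the corollary is explicitly advertised as an immediate consequence, I do not anticipate a hard step: all the substantive work is already packaged into the definition of $\G \lll \T^{n-1} p \rrr$ (where monotonicity is established) and into Theorem~\ref{thm:main_G_computation} (where the invariant is evaluated on the divisor complements). The only thing to double-check is the degree threshold $\sum d_i \geq n+1$ needed to invoke Theorem~\ref{thm:main_G_computation}, which as noted follows automatically from the stated numerical inequalities.
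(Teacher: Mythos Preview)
Your argument is correct and is exactly the approach the paper intends: the corollary is stated as an immediate consequence of Theorem~\ref{thm:main_G_computation}, and the paper gives no separate proof beyond that remark. The monotonicity of $\G\lll \T^{n-1} p\rrr$ combined with the computed values yields the contradiction precisely as you describe.
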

\NI It is worth emphasizing that there is a symplectic (and in particular smooth) embedding from $X_{\vec{d}}^{2n}$ into $X_{\vec{d'}}^{2n}$ whenever $\vec{d'}$ is a subtuple of $\vec{d}$, essentially given by adding back in some of the divisor components (see \S\ref{subsec:div_compl}).
For instance, there is a symplectic embedding $X_k^{2n} \shookrightarrow X_{k'}^{2n}$ for any $k,k' \in \Z_{\geq 1}$. In particular, the obstructions provided by Corollary~\ref{cor:G_obstructions} are a purely exact symplectic phenomenon.

\begin{remark}[on virtual perturbations]\label{rmk:virtual}
We point out that the general construction of linearized contact homology, with its $\Li$ structure and full functoriality package, requires a virtual perturbation framework to achieve transversality for configurations involving multiply covered curves. 
The polyfold theory of Hofer--Wysocki--Zehnder is widely expected to provide such a framework (see e.g. \cite{hofer2017polyfold}).
There are several other candidate such frameworks in development and in various stages of completeness - see e.g. \cite{fish2018lectures, pardon2019contact,HuN, bao2015semi, ishikawa2018construction} and the references therein. 
Our high level discussion of symplectic invariants in \S\ref{sec:invariants} and their computation in \S\ref{sec:computationsI} relies only general properties of SFT as outlined in \cite{EGH2000} and not on any particularities of the chosen perturbation framework.
Subsequently, in \S\ref{sec:computationsII} we give a detailed discussion of transversality for the curves relevant to our main applications, and proceed to give direct proofs based on classical transversality techniques.
\end{remark}

\sss

Although Corollary~\ref{cor:G_obstructions} rules out many Liouville embeddings between hypersurface complements, it turns out to be rather far from optimal in general. Indeed, there are many cases in Problem~\ref{prob:hyp} with $\sum_{i=1}^k d_i \leq \sum_{i=1}^{k'}d_i'$ for which a stronger obstruction is necessary. In \S\ref{sec:computationsII}, we refine the proof of Theorem~\ref{thm:main_G_computation} by analyzing the outcome of neck stretching in more detail, arriving at our main combinatorial obstruction.
\begin{thm}\label{thm:main_combinatorial}
Fix $n \in \Z_{\geq 1}$, and consider tuples $\vec{d} = (d_1,\dots,d_k) \in \Z_{\geq 1}^k$ and $\vec{d'} = (d_1',\dots,d_{k'}') \in \Z_{\geq 1}^{k'}$ with $\sum_{i=1}^k d_i, \sum_{i=1}^{k'}d_i' \geq n+1$.
Given a Liouville embedding $X_{\vec{d}}^{2n} \es X_{\vec{d'}}^{2n}$, we must have:
\begin{itemize}
\item positive integers $l,q \in \Z_{\geq 1}$ with $\sum_{i=1}^k d_i \leq l \leq \sum_{i=1}^{k'}d_i'$ and $q(\sum_{i=1}^kd_i - n - 1) \leq l -n - 1$
\item tuples $\vec{x}_1,\dots,\vec{x}_l \in \Z_{\geq 0}^k \setminus \{\vec{0}\}$, each having at most $n$ nonzero components, such that $\sum_{i=1}^l \vec{x}_i = q\vec{d}$
\item tuples $\vec{y}_1,\dots,\vec{y}_l \in \Z_{\geq 0}^{k'} \setminus \{\vec{0}\}$, 
such that $\sum_{i=1}^l \vec{y}_i = \vec{d'}$
\item a group homomorphism $\Phi: \Z^k/(\vec{d}) \rightarrow \Z^{k'}/(\vec{d'})$ such that $\Phi(\vec{x}_i\text{ mod }(\vec{d})) = \vec{y}_i\text{ mod }(\vec{d'})$ for $i = 1,\dots,l$.
\end{itemize}
\end{thm}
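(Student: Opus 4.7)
The plan is to neck-stretch a rigid rational curve realizing the invariant $\G\lll \T^{n-1}p \rrr(X_{\vec{d'}}^{2n})$ along the boundary of the embedded domain and to read off the required combinatorial data from the limiting pseudoholomorphic building. By Theorem~\ref{thm:main_G_computation} applied to $X_{\vec{d'}}^{2n}$, fix a rigid rational curve $C$ in $\widehat{X_{\vec{d'}}}$ through a generic point $p$ with tangency of order $n-1$ to a generic local divisor at $p$, having exactly $\sum_{i=1}^{k'} d_i'$ positive Reeb ends. Using $\iota: X_{\vec{d}} \es X_{\vec{d'}}$, place $p$ and the local divisor inside $\iota(\widehat{X_{\vec{d}}})$ and take a family of compatible almost complex structures neck-stretching along $\iota(\partial X_{\vec{d}})$. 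By SFT compactness, $C$ degenerates to a pseudoholomorphic building $\mathbf{C}_\infty$ whose bottom level lies in $\widehat{X_{\vec{d}}}$ and carries the marked point and tangency constraint, whose top level lies in the completion of the Liouville cobordism obtained by removing $\iota(X_{\vec{d}})$ from $\widehat{X_{\vec{d'}}}$, and which may include intermediate symplectization levels over $\partial X_{\vec{d}}$.

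To extract the combinatorics, let $l$ be the total number of Reeb orbits $\gamma_{\vec{x}_1}, \ldots, \gamma_{\vec{x}_l}$ along which the bottom of $\mathbf{C}_\infty$ is glued to the rest, where each $\vec{x}_i \in \Z_{\geq 0}^k$ records the winding of $\gamma_{\vec{x}_i}$ around the divisor components of $X_{\vec{d}}^{2n}$; since at most $n$ divisor components can transversally meet anywhere in $\CP^n$, each $\vec{x}_i$ has at most $n$ nonzero entries. The genus-zero tree structure of $\mathbf{C}_\infty$ partitions its non-bottom portion into $l$ connected subbuildings, each attached to the bottom along a single $\gamma_{\vec{x}_i}$; let $\vec{y}_i \in \Z_{\geq 0}^{k'}$ record the total winding of the $i$-th subbuilding's positive ends around the divisor components of $X_{\vec{d'}}^{2n}$. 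Then $\sum_i \vec{y}_i = \vec{d'}$ (matching $C$'s asymptotics) and $l \leq \sum d_i'$ (each subbuilding has at least one positive end); taking $q \geq 1$ to be the total $\CP^n$-degree of the bottom-level components (positive by Stokes' theorem), intersection with each divisor yields $\sum_i \vec{x}_i = q\vec{d}$. The homomorphism $\Phi: \Z^k/(\vec{d}) \to \Z^{k'}/(\vec{d'})$ is the inclusion-induced map on $H_1$ under the standard identification of each divisor-complement first homology by meridian generators $m_j$ modulo the hyperplane relation $\sum_j d_j m_j = 0$; each top-level subbuilding provides a relative $2$-cycle in $X_{\vec{d'}}$ exhibiting $\iota_*(\vec{x}_i) = \vec{y}_i$ in $H_1(X_{\vec{d'}})$, forcing $\Phi(\vec{x}_i) = \vec{y}_i$.

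The remaining inequality $q(\sum d_i - n - 1) \leq l - n - 1$ --- which, together with $q \geq 1$ and $\sum d_i \geq n+1$, also yields $\sum d_i \leq l$ --- comes from the virtual-dimension count of the constrained moduli space containing the bottom component that carries the tangency. Applying the standard SFT Fredholm index formula, with Conley--Zehnder indices $\cz(\gamma_{\vec{x}_i})$ computed relative to a natural trivialization at each normal-crossings stratum, first Chern number $c_1(\CP^n) = n+1$, Euler-characteristic contribution from an $l$-punctured sphere, and a subtraction of $2n + 2(n-1) = 4n-2$ for the point and tangency constraints, the rigidity condition $\ind = 0$ rearranges into the stated inequality. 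The main obstacle will be this dimension computation in tandem with justifying the clean tree decomposition in the presence of possibly multiply covered or nodal limits: one must correctly book-keep Conley--Zehnder indices at normal-crossings strata, secure sufficient transversality so that only expected-dimensional configurations contribute, and identify which bottom-level component carries the tangency --- precisely the refined analysis that \S\ref{sec:computationsII} undertakes, building on the techniques used in the proof of Theorem~\ref{thm:main_G_computation}.
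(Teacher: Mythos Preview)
Your proposal is correct and follows essentially the same approach as the paper: neck-stretch the moduli space $\calM_{X_{\vec{d'}}^{2n}}(\beta_1^{\times d_1'},\dots,\beta_{k'}^{\times d_{k'}'})\lll \T^{n-1}p\rrr$ along the image of $\bdy X_{\vec{d}}^{2n}$, then read off $l$, the $\vec{x}_i$, the $\vec{y}_i$, $q$, and $\Phi = \iota_*$ from the limiting building, with the final inequality coming from the index of the main bottom component. Two small corrections: the codimension of $\lll \T^{n-1}p\rrr$ is $2n + 2(n-1) - 2 = 4n-4$, not $4n-2$; and the relevant input is $\ind(u) \geq 0$ rather than $\ind(u) = 0$, since without virtual perturbations one cannot force every piece of the building to have index zero (the paper secures $\ind(u)\geq 0$ directly via Lemma~\ref{lem:main_cpnt_nonneg_ind}, which controls multiply covered bottom components by comparison with their underlying simple curve). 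Neither point affects the shape of the argument or the conclusion.
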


This theorem is proved using moduli spaces of genus zero punctured pseudoholomorphic curves with several positive ends.
The proof of Theorem~\ref{thm:main_liouville} in \S\ref{subsec:completing} will then be extracted from the combinatorics of Theorem~\ref{thm:main_combinatorial}, together with the constructions described in Theorem~\ref{thm:embeddings} below.
In fact, we conjecture that the main degree assumption $\sum\limits_{i=1}^{k'} d_i' < 2\sum\limits_{i=1}^{k}d_i - n -1$ in Theorem~\ref{thm:main_liouville} can be removed, but it is not immediately clear whether this can be deduced from the combinatorics of Theorem~\ref{thm:main_combinatorial}.

At present, we illustrate the utility of Theorem~\ref{thm:main_combinatorial} with some examples which go beyond Corollary~\ref{cor:G_obstructions}.
Note that these examples also hold without the assumption $\sum\limits_{i=1}^{k'} d_i' < 2\sum\limits_{i=1}^{k}d_i - n -1$.
\begin{example}\label{ex:hyperplane1}
In the context of Theorem~\ref{thm:main_combinatorial}, consider a Liouville embedding $X_{\vec{d}}^{2n} \es X_{\vec{d'}}^{2n}$ where the target $X_{\vec{d'}}^{2n} = X_{k'}^{2n}$ is a hyperplane complement.
Then the source is also a hyperplane complement, i.e. we must have $X_{\vec{d}}^{2n} = X_k^{2n}$ for some $k \leq k'$.

Indeed, in the context of Theorem~\ref{thm:main_combinatorial}, note that the rank of the image of $\Phi$ in $\Z^{k'}/(\vec{d'}) \cong \Z^{k'-1}$ must be at most $l-1$.
Since the rank of the domain $\Z^k/(\vec{d})$ of $\Phi$ is $k-1$, this is only possible if we have $k \geq l$. We therefore have $k \leq \sum_{i=1}^{k}d_i \leq l \leq k$, which implies that $d_1 = \dots = d_k = 1$.
\end{example}

\begin{example}\label{ex:domain_single_comp}
In the context of Theorem~\ref{thm:main_combinatorial}, consider a Liouville embedding $X_{\vec{d}}^{2n} \es X_{\vec{d'}}^{2n}$ where the source $X_{\vec{d}}^{2n} = X_{(d_1)}^{2n}$ is the complement of a single divisor component.
We claim that $d_1$ must divide $\gcd(\vec{d'})$.
Conversely, by Theorem~\ref{thm:embeddings} below, if $d_1$ divides $\gcd(\vec{d'})$ then there is a Weinstein embedding $X_{(d_1)}^{2n} \ws X_{\vec{d'}}^{2n}$. 
We conclude that $X^{2n}_{(d_1)} \es X^{2n}_{\vec{d'}}$ if and only if $d_1 | \gcd(\vec{d'})$.

To justify the claim, note that we must have $l \geq d_1$.
Moreover, since the domain of $\Phi$ is $\Z / (d_1)$, for each $i = 1,\dots,l$ we must have $d_1 \vec{y_i} = 0 \in \Z^{k'}/(\vec{d'})$, i.e. $d_1 \vec{y_i} = a_i\vec{d'}$ for some $a_i \in \Z_{\geq 1}$.
We then have
\begin{align*}
\vec{d'} = \sum_{i=1}^l \vec{y}_i = \sum_{i=1}^l \tfrac{a_i}{d_1}\vec{d'} \geq \tfrac{l}{d_1} \vec{d'} \geq \vec{d'},
\end{align*}
so all of these inequalities are equalities and we have $a_1 = \dots = a_l = 1$. 
It follows that $d_1$ divides each component of $\vec{d'}$, and hence it divides $\gcd(\vec{d'})$.
\end{example}

\subsubsection{Constructions}\label{subsubsec:constructions}

For $k \in \Z_{\geq 1}$, let $\calS_k := \Z_{\geq 1}^k/\Sigma_k$ denote the set of unordered $k$-tuples of positive integers.
Here $\Sigma_k$ denotes the symmetric group on $k$ letters with its natural action on $\Z^k_{\geq 1}$.
We will often represent the equivalence of a $k$-tuple by its unique representative $(d_1,\dots,d_k)$ such that $d_1 \geq \dots \geq d_k$.
Put $\calS := \cup_{k=1}^\infty \calS_k$.
We define a partial order on $\calS$ as follows:
\begin{definition}
For $\vec{d},\vec{d'} \in \calS$, we put $\vec{d} \leqq \vec{d'}$ if $\vec{d'}$ can be obtained from $\vec{d}$ by a sequence
of the following moves:
\begin{enumerate}
\item (combination) delete two entries $d_i,d_j$ for some $1 \leq i < j \leq k$ and add the new entry $d_i + d_j$
\item (duplication) add a new entry $d_{k+1}$ with $d_{k+1} = d_i$ for some $1 \leq i \leq k$.
\end{enumerate}
\end{definition}
\NI For example, we have $(3,2,2) \leqq (7,2)$ thanks to the following sequence of moves: $$(3,2,2) \rightsquigarrow (3,2,2,2) \rightsquigarrow (5,2,2) \rightsquigarrow (7,2).$$
By contrast, we have $(3,2,2) \not\leqq (10,1)$, since there is no way to acquire the entry $1$ by a sequence of the above moves.
\begin{thm}\label{thm:embeddings}
Fix $n \in \Z_{\geq 1}$. For $\vec{d},\vec{d'} \in \calS$ such that $\vec{d} \leqq \vec{d'}$ there is a Weinstein
embedding of $X^{2n}_{\vec{d}}$ into $X^{2n}_{\vec{d'}}$.
\end{thm}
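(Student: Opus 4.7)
The plan is to decompose the partial order $\leqq$ into its generating moves (duplication and combination) and to exhibit a Weinstein embedding corresponding to each. Since Weinstein embeddings compose, and $\vec{d} \leqq \vec{d'}$ means by definition that $\vec{d'}$ is obtained from $\vec{d}$ by a finite sequence of such moves, it suffices to construct $X^{2n}_{\vec{d}} \ws X^{2n}_{\vec{d'}}$ when $\vec{d'}$ differs from $\vec{d}$ by a single move. In both cases, the strategy is to exhibit $X^{2n}_{\vec{d'}}$ as $X^{2n}_{\vec{d}}$ with a collection of Weinstein handles attached, so that the desired embedding is simply the inclusion of the base domain into the enlarged handlebody. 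It is worth emphasizing that this embedding is \emph{not} given by set inclusion inside $\CP^n$; as a subset of $\CP^n$, the target $X^{2n}_{\vec{d'}}$ is smaller than $X^{2n}_{\vec{d}}$ (more of $\CP^n$ removed), but as a Weinstein domain it is larger in the sense of carrying additional handles reflecting the more elaborate divisor combinatorics.

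For the duplication move $\vec{d} \mapsto (\vec{d}, d_i)$, I would take the new divisor $D_{k+1}$ to be a smooth degree-$d_i$ hypersurface in general position relative to $D_1, \ldots, D_k$ (for instance, a small generic perturbation of $D_i$), so that $D_1, \ldots, D_{k+1}$ form an SNC configuration. The Weinstein structure on $X^{2n}_{(\vec{d}, d_i)}$ is then built by first constructing the Weinstein structure on $X^{2n}_{\vec{d}}$ and then attaching the handles dictated by the finer stratification associated with $D_{k+1}$: near $D_{k+1} \setminus \bigcup_i D_i$ the local model is $\C^{n-1} \times \C^*$, contributing a subcritical Weinstein $1$-handle along the meridional circle of $D_{k+1}$, while near each intersection stratum $D_{k+1} \cap D_{i_1} \cap \cdots \cap D_{i_r}$ further handles appear according to the standard model for SNC complements.

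For the combination move replacing $d_i, d_j$ by $d_i + d_j$, I would deform the nodal divisor $D_i \cup D_j$ within the family $\{f_t = 0\}$ given by $f_t = (1-t) f_i f_j + t g$, where $g$ is a generic polynomial of degree $d_i + d_j$. At $t = 0$ this recovers $D_i \cup D_j$, while for generic small $t > 0$ it gives a smooth hypersurface $D'_t$ of degree $d_i + d_j$ transverse to the remaining $D_l$. Transversely to the singular locus $D_i \cap D_j$, the smoothing is the standard $A_1$ Milnor picture $\{xy = 0\} \rightsquigarrow \{xy = t\}$, which in the complement introduces Lagrangian vanishing spheres fibered over $D_i \cap D_j$; attaching Weinstein handles of the middle index along these spheres turns $X^{2n}_{\vec{d}}$ into $X^{2n}_{\vec{d'}}$ up to Weinstein homotopy, and produces the embedding.

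The principal obstacle is upgrading these informal handle-attachment pictures to actual identifications of Weinstein-homotopy classes, so that the Weinstein structure on $X^{2n}_{\vec{d'}}$ is literally that of $X^{2n}_{\vec{d}}$ with explicit Weinstein handles added, compatibly with the ambient projective geometry. For this I would rely on existing constructions of Weinstein structures on ample SNC divisor complements in smooth projective varieties, where the handle decomposition can be read off transparently from the divisor combinatorics; each elementary move of $\leqq$ is then realized on the nose by Weinstein handle attachment, and the theorem follows by composition.
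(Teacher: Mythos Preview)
Your reduction to single moves matches the paper exactly, and your combination move via the family $f_t = (1-t)f_if_j + tg$ is essentially the degeneration the paper uses. The main difference is that the paper does not attempt to identify explicit handle attachments for either move. Instead it proves a short general lemma (Proposition~\ref{prop:cob_gen_proj_var}): for any smooth family $\sigma_t$ of holomorphic sections of an ample line bundle on a smooth projective variety $M$ with $D_t = \sigma_t^{-1}(0)$ simple normal crossing, there is a Weinstein embedding of $M \setminus \Op(D_0)$ into $M \setminus \Op(D_t)$ for small $t > 0$. The proof simply compares sublevel sets of the exhausting strictly plurisubharmonic functions $\phi_t = -\log\|\sigma_t\|$ and uses that the Stein (hence Weinstein) deformation type of such a complement is independent of the choice of defining function. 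This handles both moves uniformly and sidesteps your acknowledged ``principal obstacle'' entirely.

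For the duplication move the paper's approach also differs substantively from yours. Rather than adding a new hypersurface $D_{k+1}$ in general position and arguing by handle attachment, the paper works in the line bundle $\mathcal{O}(\sum_i d_i + d_k)$ and takes a family whose special section $\sigma_0$ vanishes to order two along $D_k$, so that set-theoretically $D_0 = D_1 \cup \cdots \cup D_k$ and hence $\CP^n \setminus D_0$ still represents $X^{2n}_{\vec{d}}$, while for $t > 0$ the double component splits into two distinct smooth degree-$d_k$ hypersurfaces. Proposition~\ref{prop:cob_gen_proj_var} then applies directly. Your handle-attachment picture is plausible, and indeed the paper remarks that a precise cobordism description along those lines exists in the literature in special cases, but the plurisubharmonic argument is both shorter and requires no case analysis of the stratification.
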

\begin{remark}
Our proof of Theorem~\ref{thm:embeddings} takes inspiration from \cite{khoa}, which gives a more precise description of the resulting Weinstein cobordism in the case that the divisor has no triple intersection points.
In fact, Theorem~\ref{thm:embeddings} may already be known to experts, but we nevertheless include the proof for completeness.
\end{remark}

\sss

Note that Theorem~\ref{thm:main_combinatorial} obstructs Liouville embeddings, hence a fortiori Weinstein embeddings. Since the constructions provided by Theorem~\ref{thm:embeddings} are Weinstein embeddings,
we can also reformulate most of the preceding results in the Weinstein category. For example, the analogue of Theorem~\ref{thm:main_liouville} is:
\begin{cor}\label{cor:main_weinstein}
Fix $n \in \Z_{\geq 1}$ and tuples $\vec{d} = (d_1,\dots,d_k) \in \Z_{\geq 1}^k$ and $\vec{d'} = (d_1',\dots,d_{k'}') \in \Z_{\geq 1}^{k'}$ with $\sum_{i=1}^kd_i,\sum_{i=1}^{k'}d_i' \geq n+1$.
Assume that we have $\sum_{i=1}^{k'} d_i' < 2\sum_{i=1}^{k}d_i - n -1$.
Then there is a Weinstein embedding $X^{2n}_{\vec{d}} \ws X^{2n}_{\vec{d'}}$ if and only if $\vec{d} \leqq \vec{d'}$.
\end{cor}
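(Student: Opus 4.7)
The plan is to deduce this corollary directly from Theorem~\ref{thm:main_liouville} and Theorem~\ref{thm:embeddings}, exploiting the fact that every Weinstein embedding is in particular a Liouville embedding.

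For the ``if'' direction, suppose $\vec{d} \leqq \vec{d'}$. Then Theorem~\ref{thm:embeddings} (which does not require any degree hypothesis) directly supplies a Weinstein embedding $X^{2n}_{\vec{d}} \ws X^{2n}_{\vec{d'}}$. So no further argument is needed here; the degree assumption $\sum_{i=1}^{k'} d_i' < 2\sum_{i=1}^{k} d_i - n - 1$ plays no role in the constructive half.

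For the ``only if'' direction, suppose there exists a Weinstein embedding $X^{2n}_{\vec{d}} \ws X^{2n}_{\vec{d'}}$. Since any Weinstein embedding is automatically a Liouville embedding (the Weinstein structure refines the Liouville structure, so the inclusion $\iota$ satisfies $\iota^*\la'$ agreeing with $\la$ up to scaling and addition of an exact one-form), we obtain in particular a Liouville embedding $X^{2n}_{\vec{d}} \es X^{2n}_{\vec{d'}}$. The degree conditions $\sum_{i=1}^k d_i, \sum_{i=1}^{k'} d_i' \geq n+1$ and $\sum_{i=1}^{k'} d_i' < 2\sum_{i=1}^{k} d_i - n - 1$ are exactly the hypotheses of Theorem~\ref{thm:main_liouville}, which then immediately yields $\vec{d} \leqq \vec{d'}$.

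There is no real obstacle here, since both implications are off-the-shelf applications of the earlier main results. The only subtlety worth flagging explicitly is the implication ``Weinstein embedding $\Rightarrow$ Liouville embedding,'' which should be stated as a remark (it reduces to the fact that the Weinstein data consists of the Liouville form $\la$ together with a compatible Morse function, so forgetting the Morse function produces a bona fide Liouville embedding). Once that is noted, the corollary follows by composing the two directions of Theorem~\ref{thm:main_liouville} with Theorem~\ref{thm:embeddings} in the obvious way.
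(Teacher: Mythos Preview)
Your proof is correct and matches the paper's approach. The paper does not give a separate proof of this corollary; it is stated immediately after observing that Theorem~\ref{thm:main_combinatorial} (and hence Theorem~\ref{thm:main_liouville}) obstructs Liouville embeddings and a fortiori Weinstein embeddings, while Theorem~\ref{thm:embeddings} already provides Weinstein embeddings, which is exactly the two-step argument you give.
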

\NI We note, however, that Weinstein embeddings have more restricted topology compared to Liouville embeddings. Namely, the complementary cobordism must admit a Morse function with all critical points having index at most half the dimension (see e.g. \S\ref{subsec:geom_prel}). Consequently, many of the obstructions involved in Corollary~\ref{cor:main_weinstein} follow simply from singular homology considerations (c.f. Remark~\ref{rmk:Liouville_vs_Weinstein_flex}).

\sss

As for the symplectic category, there is quite a bit more flexibility. 
For example, as mentioned above there are symplectic embeddings $X^{2n}_k \shookrightarrow X^{2n}_{k'}$ for any $k,k' \in \Z_{\geq 1}$. At the same time, in some cases symplectic embeddings are automatically Liouville embeddings due to first cohomology considerations, and hence Theorem~\ref{thm:main_liouville} applies.
\begin{cor}\label{cor:main_symplectic}
Fix $n \in \Z_{\geq 2}$ and tuples $\vec{d} = (d_1,\dots,d_k) \in \Z_{\geq 1}^k$ and $\vec{d'} = (d_1',\dots,d_{k'}') \in \Z_{\geq 1}^{k'}$ with $\sum_{i=1}^kd_i,\sum_{i=1}^{k'}d_i' \geq n+1$.
If there is a symplectic embedding $X^{2n}_{\vec{d}} \shookrightarrow X^{2n}_{\vec{d'}}$, then we must have that $\gcd(\vec{d})$ divides $\gcd(\vec{d'})$. 
Moreover, if we assume that $\gcd(\vec{d})$ is an entry of $\vec{d}$, then there is a symplectic embedding $X^{2n}_{\vec{d}} \shookrightarrow X^{2n}_{\vec{d'}}$ if and only if $\gcd(\vec{d})$ divides $\gcd(\vec{d'})$.
\end{cor}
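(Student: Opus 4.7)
Set $g = \gcd(\vec{d})$ and $g' = \gcd(\vec{d'})$. The plan is to reduce both implications of the corollary to the single-component case $X^{2n}_{(g)}$, using Theorem~\ref{thm:embeddings} as a bridge and a first-cohomology argument to promote symplectic embeddings to Liouville ones.

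For the obstructive implication, first observe that $(g) \leqq \vec{d}$: since every $d_i$ is a multiple of $g$, one builds $\vec{d}$ from the $1$-tuple $(g)$ by duplicating $(g)$ sufficiently many times and then combining groups of duplicates to produce each entry $d_i$. Theorem~\ref{thm:embeddings} then supplies a Weinstein embedding $X^{2n}_{(g)} \ws X^{2n}_{\vec{d}}$, and composing with the hypothetical symplectic embedding $X^{2n}_{\vec{d}} \shookrightarrow X^{2n}_{\vec{d'}}$ yields a symplectic embedding $\iota : X^{2n}_{(g)} \shookrightarrow X^{2n}_{\vec{d'}}$. The crucial point is that for $n \geq 2$ the complement of a smooth degree-$g$ hypersurface in $\CP^n$ has fundamental group $\Z/g$, so $H^1(X^{2n}_{(g)}; \R) = 0$. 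Consequently the closed $1$-form $\iota^*\la_{\vec{d'}} - \la_{(g)}$ is exact, promoting $\iota$ to a Liouville embedding, and Example~\ref{ex:domain_single_comp} now yields $g \mid g'$.

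For the constructive implication, assume $g$ is an entry of $\vec{d}$, say $g = d_j$, and that $g \mid g'$. Since $(d_j)$ is a subtuple of $\vec{d}$, the discussion following Corollary~\ref{cor:G_obstructions} produces a symplectic embedding $X^{2n}_{\vec{d}} \shookrightarrow X^{2n}_{(d_j)}$ by restoring the other divisor components. Because each entry of $\vec{d'}$ is a multiple of $d_j = g$, we have $(d_j) \leqq \vec{d'}$ by the same duplication-and-combination procedure, so Theorem~\ref{thm:embeddings} supplies a Weinstein embedding $X^{2n}_{(d_j)} \ws X^{2n}_{\vec{d'}}$. The composition is the desired symplectic embedding.

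The main step to carry out carefully is the cohomological upgrade; passing from $X^{2n}_{\vec{d}}$ (whose $H^1$ is generally nonzero, so that a symplectic embedding need not be Liouville) to the single-component domain $X^{2n}_{(g)}$ is essential, and the ease of the argument rests on having $H^1(X^{2n}_{(g)};\R) = 0$ for $n \geq 2$. In the intermediate range $2 \leq g \leq n$, where the hypothesis $\sum d_i \geq n+1$ underpinning Example~\ref{ex:domain_single_comp} is not directly met by $X^{2n}_{(g)}$ as a source, one expects to need either a mild strengthening of the single-component obstruction or a more direct application of Theorem~\ref{thm:main_combinatorial} to the upgraded Liouville embedding; the extreme cases $g = 1$ (where $g \mid g'$ is vacuous) and $g \geq n+1$ (covered directly) are immediate. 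Finally, the $n = 1$ case is degenerate since smoothness forces every entry of $\vec{d}$ and $\vec{d'}$ to equal $1$, reducing both directions to $1 \mid 1$.
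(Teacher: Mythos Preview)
Your approach is essentially identical to the paper's: reduce to the single-component source $X^{2n}_{(g)}$ via Theorem~\ref{thm:embeddings}, use $H^1(X^{2n}_{(g)};\R)=0$ to upgrade the symplectic embedding to a Liouville one, then invoke Example~\ref{ex:domain_single_comp}; for the converse, add back divisor components and concatenate with the Weinstein embedding from Theorem~\ref{thm:embeddings}.

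Your last paragraph, however, flags a genuine issue that the paper's own proof glosses over. Example~\ref{ex:domain_single_comp} is stated ``in the context of Theorem~\ref{thm:main_combinatorial}'', whose hypotheses require the \emph{source} to satisfy $\sum d_i \ge n+1$; when $2 \le g \le n$ the single-component source $X^{2n}_{(g)}$ does not meet this, so neither the paper nor you have a complete argument in that range. The paper simply invokes Example~\ref{ex:domain_single_comp} without comment, so you are being more careful than the paper itself. Your suggested workarounds (strengthening the single-component obstruction, or applying Theorem~\ref{thm:main_combinatorial} more directly) are reasonable directions, but as written neither proof is airtight for $2 \le g \le n$.
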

\begin{proof}
Suppose that we have a symplectic embedding $X^{2n}_{\vec{d}} \shookrightarrow X^{2n}_{\vec{d'}}$. Put $g := \gcd(\vec{d})$.
By Theorem~\ref{thm:embeddings} there is a Weinstein embedding $X^{2n}_{(g)} \ws X^{2n}_{\vec{d}}$, and hence
by concatenating we get a symplectic embedding $X^{2n}_{(g)} \shookrightarrow X_{\vec{d'}}^{2n}$.
Since $H^1(X^{2n}_{(g)};\R) = 0$, this is automatically a Liouville embedding.
By Example~\ref{ex:domain_single_comp}, such a Liouville embedding exists only if $g$ divides $\gcd(\vec{d'})$.

If we assume that $g$ divides $\gcd(\vec{d'})$ and also that $g$ is an entry of $\vec{d}$, then we have a symplectic embedding $X^{2n}_{\vec{d}} \shookrightarrow X^{2n}_{(g)}$ given by adding back divisor components, and we can concatenate this with a Weinstein embedding $X^{2n}_{(g)} \ws X^{2n}_{\vec{d'}}$ to get a symplectic embedding
$X_{\vec{d}}^{2n} \shookrightarrow X_{\vec{d'}}^{2n}$.
\end{proof}

\sss

The rest of this paper is structured roughly as follows. 
In \S\ref{sec:setting_stage} we discuss the necessary background on divisor complements and pseudoholomorphic curves, meanwhile setting the notation for the rest of the paper.
In \S\ref{sec:invariants} we introduce our main symplectic embedding obstructions $\G\lll\T^mp\rrr$, which arise as simplifications of a more general family of symplectic invariants $\I^{\leq l}$.
In \S\ref{sec:computationsI} we begin the discussion of the relevant SFT moduli spaces and we prove Theorem~\ref{thm:main_G_computation}, assuming virtual perturbations.
In \S\ref{sec:computationsII}, we analyze the moduli spaces in more detail and prove Theorem~\ref{thm:main_combinatorial}.
In \S\ref{sec:constructions}, we produce Weinstein embeddings, and also discuss flexibility constructions which place our main results into broader context.
Finally, in \S\ref{sec:concl} we give a (highly nonexhaustive) list of open problems and future directions.

\subsection*{Acknowledgements}
S.G. would like to thank Daniel Pomerleano for the suggestion that the holomorphic lines in $\CP^n$ ought to be able to (through a numerical invariant proposed by Seidel \cite{seidelslides} extracted from $\Li$ structures in equivariant symplectic cohomology) exhibit the increasing Liouville complexity of hyperplane complements in $\CP^n$. S.G. was supported by NSF grant DMS--1907635.
K.S. thanks Oleg Lazarev and Mark McLean for helpful discussions and comments. K.S. we partly supported by NSF grant DMS-2105578. We also thank the anonymous referees for many helpful suggestions.

\begin{addendum}
After the first draft of this paper was completed, the authors learned of the concurrent paper \cite{moreno2020landscape} by A. Moreno and Z. Zhou, whose techniques and results are closely related to the present work. 
In \cite{moreno2020landscape}, the authors define and exploit algebraic structures on rational symplectic field theory in order to obstruct exact cobordisms between contact manifolds, implemented using Pardon's framework \cite{pardon2019contact}. In particular, their techniques recover our Corollary~\ref{cor:intro_X_k_emb} (see \cite[Thm. G]{moreno2020landscape}), and they moreover compute their invariants for a variety of other geometrically natural examples.
\end{addendum}

\section{Setting the stage}\label{sec:setting_stage}

\subsection{Geometric preliminaries}\label{subsec:geom_prel}

\subsubsection{Contact manifolds and symplectizations}

Recall that a {\em contact form} on a closed odd-dimensional manifold $Y$ is a maximally nondegenerate one-form $\alpha$, i.e. $\alpha \wedge \underbrace{d\alpha \wedge \dots \wedge d\alpha}_{n-1}$ is everywhere nonvanishing, where $\dim Y = 2n-1$.
If the orientation induced by this volume form agrees with a preferred orientation on $Y$ then we say that $\alpha$ is a {\em positive} contact from.
A {\em contact manifold} is a pair $Y$ equipped with a hyperplane distribution of the form $\xi = \ker \alpha$ for some contact form $\alpha$.
In this paper we will typically work with {\em strict} contact manifolds, i.e. contact manifolds having a preferred one-form $\alpha$.
By slight abuse of notation, we will often refer to the strict contact manifold simply by $Y$ when $\alpha$ is implicitly understood, and a similar remark holds for Liouville domains and so on.

Given a strict contact manifold $(Y,\alpha)$, the {\em symplectization} is the symplectic manifold $\R \times Y$ equipped with the symplectic form $d(e^r\alpha)$ and preferred one-form $e^r\alpha$, where $r$ is the coordinate on $\R$.
We will sometimes also utilize the positive (resp. negative) half-symplectization, given by restricting to $\R_{\geq 0} \times Y$ (resp. $\R_{\leq 0} \times Y$).

The {\em Reeb vector field} $R_\alpha$ is the unique vector field on $Y$ such that $\alpha(R_\alpha) \equiv 1$ and $d\alpha(R_\alpha,-) \equiv 0$. By a ($T$-periodic) {\em Reeb orbit} we mean a loop $\gamma: [0,T] \rightarrow Y$ with $\gamma(0) = \gamma(T)$ for some $T \in \R_{>0}$, such that $\dot{\gamma}(t) = R_\alpha(\gamma(t))$ for all $t \in [0,T]$.
Here $T$ is called the {\em period} or {\em action} of $\gamma$, denoted by $\calA(\gamma)$. 
Note that equivalently we have $\calA(\gamma) = \int_\gamma \alpha$.
A Reeb orbit $\gamma$ is {\em nondegenerate} if the map $\xi|_{\gamma(0)} \rightarrow \xi|_{\gamma(0)}$ induced by linearized the time-$T$ Reeb flow does not have $1$ as an eigenvalue, and the contact form $\alpha$ is nondegenerate if all of its Reeb orbits are.

\subsubsection{Flavors of open symplectic manifolds}

Recall that a {\em Liouville domain} is a pair $(X,\la)$, where $X$ is an even-dimensional compact manifold with boundary and $\la$ is a one-form such that $d\la$ is symplectic and $\la$ restricts to a positive\footnote{More precisely, we orient $X$ by the volume form $\wedge^n \omega$ and we orient $\bdy X$ via the boundary orientation.} contact form on $\bdy X$.
This last condition is equivalent to the
Liouville vector field $V_\la$, characterized by $d\la(V_\la,-) = \la$, being outwardly transverse along $\bdy X$.

There is a closely related notion of {\em Liouville manifold}, which is a pair $(X,\la)$, where $X$ is a noncompact manifold and $\la$ is a one-form such that $d\la$ is symplectic and the flow of the Liouville vector field $V_\la$ is complete. 
If we can moreover find a compact subdomain $D \subset X$ with smooth boundary such that $V_\la$ is outwardly transverse along $\bdy D$ and $\la$ is nonvanishing on $X \setminus D$, then $(X,\la)$ is said to be of ``finite type''.
In this case, the restriction $(D,\la|_D)$ defines a Liouville domain. Conversely, if $(X,\la)$ is a Liouville domain, its symplectic completion $(\wh{X},\wh{\la})$ is the finite type Liouville manifold given by attaching the positive half-symplectization of $(\bdy X, \la|_{\bdy X})$ to $(X,\la)$. 

Given two Liouville domains $(X,\la)$ and $(X,\la')$ on the same manifold $X$, we say that they are {\em Liouville homotopic} if there is a smooth one-parameter family of Liouville forms $\la_t$, $t \in [0,1]$, with $\la_0 = \la$ and $\la_1 = \la'$. Two Liouville domains $(X,\la)$ and $(X',\la')$ are {\em Liouville deformation equivalent} if there exists a diffeomorphism $F: X \rightarrow X'$ such that $(X,\la)$ and $(X,F^*\la')$ are Liouville homotopic.
These induce equivalent notions of Liouville homotopy and Liouville deformation equivalence between the corresponding symplectic completions.
By a version of Moser's Stability Theorem (see \cite[Prop. 11.8]{cieliebak2012stein}), if two Liouville domains are Liouville homotopic, then their symplectic completions are symplectomorphic.
Moreover, by \cite[Lem. 11.2]{cieliebak2012stein}, if two Liouville manifolds $(X,\la)$ and $(X,\la')$ are symplectomorphic, then we can find a diffeormorphism $G: X \rightarrow X'$ such that $G^*\la' - \la$ is an exact one-form.

In particular, the process of symplectic completion sets up a one-to-one correspondence between Liouville domains up to Liouville homotopy and finite type Liouville manifolds up to Liouville homotopy.
In the sequel we will mostly phrase results in terms of Liouville domains for convenience. 
A similar remark will apply for Weinstein domains / manifolds and Stein domains / manifolds.

\sss

A {\em Weinstein domain} is a triple $(X,\la,\phi)$, where $(X,\la)$ is a Liouville domain and $\phi: X \rightarrow \R$ is a generalized Morse function which is gradient-like for the Liouville vector field $V_\la$ and constant along $\bdy X$. 
Similarly, a {\em Weinstein manifold} is a triple $(X,\la,\phi)$, where $(X,\la)$ is a Liouville manifold and $\phi: X \rightarrow \R$ is an {\em exhausting} (i.e. proper and bounded from below) Morse function such that $V_\la$ is gradient-like for $\phi$.
The Weinstein manifold $(X,\la,\phi)$ is {\em finite type} if and only if $\phi$ has finitely many critical points, which implies that $(X,\la)$ is a finite type Liouville manifold.
A standard computation shows that each critical point of $\phi$ has index at most half the dimension of $X$, and this puts strong restrictions on the homotopy type of $X$.
Conversely, any manifold $X$ of dimension at least six which admits a nondegenerate two-form and an exhausting Morse function with critical points of index at most half the ambient dimension is diffeomorphic to a Weinstein manifold (see \cite[Thm. 13.2]{cieliebak2012stein}).

Two Weinstein domains $(X,\la,\phi)$ and $(X,\la',\phi')$ are {\em Weinstein homotopic} if there exists a smooth family of Weinstein domains $(X,\la_t,\phi_t)$ for $t \in [0,1]$, with $(\la_0,\phi_0) = (\la,\phi)$ and $(\la_1,\phi_1) = (\la',\phi')$.
Note that a generic one-parameter family of functions will have isolated {\em birth-death type degenerations}, which is why we require the functions $\phi_t$ to be only generalized Morse (see \cite[\S9.1]{cieliebak2012stein}).
Similarly, two Weinstein domains $(X,\la,\phi)$ and $(X',\la',\phi')$ are {\em Weinstein deformation equivalent} if there is a diffeomorphism $F: X \rightarrow X'$ such that $(X,\la,\phi)$ and $(X,F^*\la',F^*\phi')$ are Weinstein homotopic.

\sss

A {\em Stein manifold} is a (necessarily noncompact) complex manifold admitting a proper biholomorphic embedding into affine space $\C^N$ for some $N \in \Z_{\geq 1}$.
There are several other common equivalent definitions - see e.g. \cite[\S 5.3]{cieliebak2012stein} for more details.
It turns out that one can always find an exhausting strictly plurisubharmonic function $\phi: X \rightarrow \R$, which we can assume is Morse after a small perturbation.
Here strict plurisubharmonicity of $\phi$ is equivalent to $-dd^\C \phi$ being a K\"ahler form, where $d^\C\phi$ denotes the one-form $d\phi \circ J$, with $J$ the (integrable) almost complex structure.
The Stein manifold $(X,\la,\phi)$ is {\em finite type} if $\phi$ has finitely many critical points.
The definitions of {\em Stein homotopy} and {\em Stein deformation equivalence} mirror the Weinstein case.

Given a Stein manifold $(X,J)$ and an exhausting strictly plurisubharmonic function $\phi: X \rightarrow \R$ , we produce a Weinstein manifold $\wein(X,J) := (X,\la := -d^\C \phi, \psi \circ \phi)$, where $\psi: \R \rightarrow \R$ is a suitable diffeomorphism (this is needed to make the vector field $V_\la$ complete - see \cite[\S2.1]{cieliebak2012stein}).
Moreover, up to Weinstein homotopy this Weinstein manifold depends only on the Stein manifold $(X,J)$ up to Stein homotopy. 
In fact, by a deep result from \cite{cieliebak2012stein}, this association sets up to one-to-one correspondence between Stein manifolds up to Stein homotopy and Weinstein manifolds up to Weinstein homotopy.
As a consequence, for the qualitative embedding problems considered in this paper, it makes no essential difference if we work in the Stein or Weinstein category.

\sss

The above definitions also naturally generalize to the notions of {\em Liouville cobordism}, {\em Weinstein cobordism}, and {\em Stein cobordism}. For example, a Liouville cobordism (a.k.a. {\em exact cobordism}) is a pair $(X,\la)$ where $X$ is a compact manifold with boundary such that the Liouville vector field is inwardly transverse along some components of $\bdy X$ (the {\em negative boundary} $\bdy^- X$) and outwardly transverse along the components of $\bdy X$ (the {\em positive boundary} $\bdy^+ X$). 
Given a Liouville cobordism $X$, we pass to its symplectic completion by attaching the positive half-symplectization of $\bdy^+X$ to its positive end and the negative half-symplectization of $\bdy^-X$ to its negative end.
Similarly, a Weinstein cobordism is a triple $(X,\la,\phi)$, where $(X,\la)$ is a Liouville cobordism and $\phi$ is a Morse function which is constant along $\bdy^-X$ and $\bdy^+X$, such that $V_\la$ is gradient-like for $\phi$.

\sss

As we recall in \S\ref{subsec:div_compl}, smooth complex affine algebraic varieties are Stein manifolds, canonically up to Stein homotopy. 
In summary, we have the following hierarchy for exact symplectic manifolds
\begin{align*}
\text{affine} \Rightarrow \text{Stein} \Leftrightarrow \text{Weinstein} \Rightarrow \text{Liouville}.
\end{align*}

\begin{remark}
Although pseudoholomorphic curves are best behaved in exact symplectic manifolds, for many purposes it suffices to have exactness only near the boundary.
If we relax the definition of a Liouville cobordism by only requiring the one-form $\la$ to be defined near the boundary, we arrive at the notion of a {\em (strong) symplectic cobordism}.
\end{remark}

\subsubsection{Embeddings}\label{subsubsec:embeddings}
Recall (see e.g. \cite{Seidel_biased_view}) that a {\em Liouville embedding} from one Liouville domain $(X,\la)$ into another Liouville domain $(X',\la')$ of the same dimension is a smooth embedding $\iota: X \hookrightarrow X'$ such that $\iota^*(\la') = e^{\rho} \la + df$ for some constant $\rho \in \R$ and some smooth function $f: X \rightarrow \R$. 
As a shorthand, we put $(X,\la) \es (X',\la')$ or simply $X \es X'$ if such a Liouville embedding exists.
Note that in the case $\rho = 0$, this says that $\iota$ is an {\em exact symplectic embedding}, i.e. $\iota^*(\la') - \la$ is an exact one-form, and if moreover $f \equiv 0$, then 
$\iota$ is a {\em strict exact symplectic embedding}, i.e. it satisfies $\iota^*\la' = \la$.
Also, given Liouville domains $(X,\la)$ and $(X',\la')$, we will say that a smooth embedding $\iota: \wh{X} \hookrightarrow \wh{X}'$ is {\em exact symplectic} if we have $\iota^*\wh{\la}' = \wh{\la} + df$ for a smooth function $f: \wh{X} \rightarrow \R$.

The following lemma combines a few standard observations about Liouville embeddings.
\begin{lemma}\label{lem:liouville_emb_lemmas}\hspace{1cm}
\begin{enumerate}[label=(\alph*)]
\item Suppose that $(X,\la_t)_{t \in [0,1]}$ is a Liouville homotopy of Liouville domains. Then there is a diffeomorphism $h: \wh{X} \rightarrow \wh{X}$ such that $h^*\wh{\la}_1 = \wh{\la}_0 + df$ for some smooth function $f: \wh{X} \rightarrow \R$.
\item Let $(X,\la)$ and $(X,\la')$ be Liouville domains, and suppose there is a Liouville embedding of $(X,\la)$ into $(X',\la')$. Then there is a Liouville homotopy $(X',\la'_t)_{t \in [0,1]}$ with $\la'_0 = \la'$ and a strict exact symplectic embedding of $(X,\la)$ into $(X',\la'_1)$. Moreover, we can assume that we have $\la'_t|_{\bdy X'} = e^{q(t)}\la'|_{\bdy X'}$ for some smooth function $q: [0,1] \rightarrow \R$ and all $t \in [0,1]$.
\item For Liouville domains $(X,\la)$ and $(X',\la')$, there is a Liouville embedding $(X,\la) \es (X',\la')$ if and only if there is a (not necessarily proper) exact symplectic embedding of $(\wh{X},\wh{\la})$ into $(\wh{X'},\wh{\la'})$.
\item Suppose that there is a Liouville embedding $(X,\la) \es (X',\la')$ of Liouville domains. Then the same is true after applying a Liouville homotopy to $(X,\la)$ or $(X',\la')$.
\end{enumerate}
\end{lemma}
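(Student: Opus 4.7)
Part (a) is essentially a packaging of the two results from \cite{cieliebak2012stein} recalled in the preceding discussion: the Moser-type stability Proposition 11.8 there provides a symplectomorphism $h: \wh X \to \wh X$ between the completions of $(X, \la_0)$ and $(X, \la_1)$, and Lemma 11.2 upgrades $h$ so that $h^* \wh{\la_1} - \wh{\la_0} = df$ for some smooth $f$. The underlying argument extends $\la_t$ to a family $\wh{\la_t}$ on $\wh X$ (arranging $t$-independence outside a compact set to ensure completeness of the relevant vector field), solves $\iota_{Z_t} d \wh{\la_t} = - \dot{\wh{\la_t}}$ for the unique time-dependent vector field $Z_t$, and integrates; the identity $\tfrac{d}{dt}(h_t^*\wh{\la_t}) = h_t^* d(\iota_{Z_t}\wh{\la_t})$ is exact, which after integration yields the exactness assertion.

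For part (b), assume first that $\iota(X) \subset \op{int}(X')$; I will address this assumption momentarily. Write $\iota^* \la' = e^\rho \la + df$ and proceed in two stages. First, rescale through $\la'_s := e^{-2s\rho} \la'$ for $s \in [0, 1/2]$, a Liouville homotopy along which $\la'_s|_{\bdy X'}$ changes only by a conformal factor, and which ends at $\iota^* \la'_{1/2} = \la + e^{-\rho} df$. Next, extend $g := e^{-\rho} f \circ \iota^{-1}$ to a smooth function $\tilde g$ on $X'$ vanishing in a collar of $\bdy X'$, and run $\la'_s := \la'_{1/2} - (2s - 1) d\tilde g$ for $s \in [1/2, 1]$; since $d\la'_s$ and the Liouville vector field $V_{\la'_s}$ near $\bdy X'$ are both unaffected by the exact perturbation, this is a Liouville homotopy with $\iota^* \la'_1 = \la$. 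The main obstacle is precisely the interiority assumption: if $\iota(\bdy X) \cap \bdy X' \neq \emptyset$ one cannot extend $g$ to vanish on $\bdy X'$. I would handle this via a preliminary Liouville homotopy that ``inflates'' a neighborhood of $\bdy X'$ using the Liouville flow on $\wh{X'}$, effectively enlarging $X'$ within its completion until $\iota(X)$ is compactly contained in the interior; this amounts to an initial rescaling which fits within the allowed conformal variation of $\la'_t|_{\bdy X'}$.

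Part (c) then follows from (b). For the forward direction, after arranging $\iota^* \la'_1 = \la$ strictly via (b), the Liouville flows of $\la$ on $\wh X$ and of $\la'_1$ on $\wh{X'}$ match under $\iota$ along a collar of $\bdy X$, so $\iota$ extends canonically to an exact symplectic embedding $\wh X \hookrightarrow \wh{X'}$ (with the $\la'_1$-completion); composing with the exact symplectomorphism of completions from (a) returns to the original $\la'$. Conversely, given a (not necessarily proper) exact symplectic embedding $\iota: \wh X \hookrightarrow \wh{X'}$, compactness of $X \subset \wh X$ yields $T > 0$ with $\phi^{-T}_{\wh{\la'}}(\iota(X)) \subset X'$, and the composition satisfies $(\phi^{-T}_{\wh{\la'}} \circ \iota)^* \la' = e^{-T} \la$ plus an exact term, giving the desired Liouville embedding. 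Finally, part (d) is immediate by combining (a) and (c): a Liouville homotopy on either source or target produces, via (a), an exact symplectomorphism of its completion, which composes with the exact symplectic embedding of completions (supplied by (c) applied to the hypothesized Liouville embedding) to yield a new exact symplectic embedding of completions, and (c) in reverse then recovers the desired Liouville embedding.
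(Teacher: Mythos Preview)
Your proof is correct and follows essentially the same route as the paper. The only notable difference is in part (b): to arrange $\iota(X)\subset\op{Int}(X')$, the paper simply post-composes $\iota$ with the negative-time Liouville flow of $X'$ (which preserves the Liouville embedding condition, merely shifting $\rho$ and scaling $f$), rather than inflating the target as you suggest; this is slightly cleaner since it leaves $X'$ untouched. The paper also combines your two stages into the single family $\la'_t:=e^{-t\rho}(\la'-t\,d\wt f)$, but this is purely cosmetic.
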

\begin{proof}
Part (a) is \cite[Prop. 11.8]{cieliebak2012stein}, proved using Moser's trick.

For (b), suppose that $\iota: X \hookrightarrow X'$ is a smooth embedding with $\iota^*\la' = e^\rho\la + df$ for some constant $\rho \in \R$ and smooth function $f: X \rightarrow \R$. 
After post-composing $\iota$ with the Liouville flow of $X'$ for some negative time, we can assume that we have $\iota(X) \subset \Int X'$.
Let $\wt{f}: X' \rightarrow \R$ be a smooth function whose restriction to $\iota(X)$ agrees with $\iota_* f$ and which vanishes outside of a small neighborhood of $\iota(X)$.
Consider the Liouville one-form on $X'$ given by $\wt{\la'} := e^{-\rho}(\la' - d\wt{f})$.
Then we have $\iota^*(\wt{\la'}) = \la$, and the family $\la'_t := e^{-t\rho}(\la' - td\wt{f})$ defines a Liouville homotopy with $\la'_0 = \la'$ and $\la'_1 = \wt{\la'}$.

For (c), first suppose that $\iota: \wh{X} \hookrightarrow \wh{X'}$ is a smooth embedding satisfying $\iota^*\wh{\la'} = \wh{\la} + df$ for a smooth function $f: \wh{X} \rightarrow \R$.
For $t \in \R$, let $\phi_t: \wh{X'} \rightarrow \wh{X'}$ denote the time-$t$ flow of the Liouville vector field, so we have $\phi_t^*\wh{\la'} = e^t \wh{\la'}$.
Then for $t \ll 0$, the composite embedding $\phi_t \circ \iota|_{X}: X \rightarrow \wh{X'}$ has image in $X'$, and it pulls back $\wh{\la'}$ to $e^t\wh{\la} + d(e^tf)$, so it is a Liouville embedding.

Conversely, suppose that $\iota: X \hookrightarrow X'$ is a Liouville embedding.
By (a) and (b), we can assume that $\iota$ is a strict exact symplectic embedding, i.e. we have $\iota^*\la' = \la$.
We extend $\iota$ to a smooth embedding $\wh{\iota}: \wh{X} \rightarrow \wh{X'}$ by requiring $\wh{\iota}$ to intertwine the Liouville flows on $(\wh{X},\wh{\la})$ and $(\wh{X'},\wh{\la'})$.
We then have $\wh{\iota}^*\wh{\la'} = \wh{\la}$.

Finally, (d) follows immediately by combining (a) and (c).

\end{proof}

\begin{remark}A key feature of Liouville embeddings $X \es X'$ is that 
curves without positive ends in the complementary cobordism $X' \setminus X$ are ruled out by Stokes' theorem.
That is, for any admissible almost complex structure $J$ on the symplectic completion of $X' \setminus X$, there are no nontrivial punctured asymptotically cylindrical $J$-holomorphic curves without positive ends (see \S\ref{subsubsec:SFT_mod_sp} below). 
\end{remark}

Similarly, given Weinstein domains $(X,\la,\phi)$ and $(X',\la',\phi')$ of the same dimension, a {\em strict Weinstein embedding} consists of a smooth embedding $\iota: X \rightarrow X'$ such that $\iota(X)$ is a sublevel set of $\phi'$ and we have $\iota^*\la' = \la$ and $\phi' \circ \iota = \phi$.
In this case, $X' \setminus \iota(X)$ equipped with the restrictions of $\la'$ and $\phi'$ is a Weinstein cobordism with positive end $\bdy X'$ and negative end $\iota(\bdy X)$.
More generally, we say there is a {\em Weinstein embedding} of $(X,\la,\phi)$ into $(X',\la',\phi')$, denoted by $X \ws X'$, if there is a strict Weinstein embedding after applying Weinstein homotopies to $X$ and $X'$.

\begin{example}\label{ex:surfaces}

For $g,k \in \Z_{\geq 0}$, let $\Sigma_{g,k}$ denote a compact surface of genus $g$ with $k$ boundary components.
Then $\Sigma_{g,k}$ admits a unique Liouville structure up to Liouville deformation equivalence. Indeed, it is easy to produce such a structure by attaching Weinstein one-handles to the two-ball, and if $\la_0$ and $\la_1$ are one-forms on $\Sigma_{g,k}$ which induce the same orientation then they can be joined by the Liouville homotopy $\la_t := (1-t)\la_0 + t\la_1$, $t \in [0,1]$.

Moreover, if $X$ and $X'$ are two-dimensional Liouville domains with a Liouville embedding $X \es X'$, by Stokes' theorem each 
component of $X' \setminus X$ must contain at least one component of $\bdy X'$, and in fact this condition also suffices to provide the existence of a Liouville embedding $X \es X'$.
It follows that there is a Liouville embedding $\Sigma_{g,k} \es \Sigma_{g',k'}$ if and only if we have $g \leq g'$ and $k-k' \leq g'-g$.
\end{example}

\subsubsection{The Conley--Zehnder index}\label{subsubsec:CZ}
The {\em Conley--Zehnder index} plays an important role in the Fredholm index formula for punctured curves.
Let $\gamma$ be a Reeb orbit in a nondegenerate strict contact manifold $(Y,\alpha)$.
The contact distribution $\xi := \ker \alpha$ equipped with the restriction of $d\alpha$ is a symplectic vector bundle over $Y$. 
The Conley--Zehnder index \cite{conley1984morse} of $\gamma$ is defined with respect to a choice of {\em framing}, i.e. a trivialization (up to homotopy) $\tau$ of the pullback of the symplectic vector bundle $\xi$ by $\gamma$.
We denote this by $\cz_{\tau}(\gamma) \in \Z$.
Given another framing $\tau'$, we have
\begin{align}
\cz_{\tau}(\gamma) - \cz_{\tau'}(\gamma) = 2m(\tau',\tau),
\end{align}
where we use $\tau$ to view the framing $\tau'$ as a loop in $\op{Sp}(2n-2)$, and $m(\tau',\tau) \in {\pi_1(\op{Sp}(2n-2))} \cong \Z$ is its Maslov index (see e.g. \cite{robbin1993maslov}).

Suppose that $Y$ is the contact boundary of a Liouville domain $X$. Given a spanning disk for $\gamma$, i.e. a map $u: \D^2 \rightarrow X$ with $u|_{\bdy \D^2} = \gamma$, there is a unique (up to homotopy) trivialization of $\gamma^*TX$ which extends to a trivialization of $u^*TX$, and this induces a trivialization of $\gamma^*\xi$.
Let $\cz_u(\gamma)$ denote the corresponding Conley--Zehnder index with respect to this trivialization.
Given another such spanning disk $u'$, the difference in Conley--Zehner index is given by
\begin{align}
\cz_{u}(\gamma) - \cz_{u'}(\gamma) = \langle 2c_1(X),A\rangle,
\end{align}
where $A \in H_2(X)$ is the homology class of the sphere given by gluing $u$ to $u'$ with its opposite orientation.

\subsection{SFT moduli spaces}\label{subsec:sft_moduli_spaces}

\subsubsection{Admissible almost complex structures}

Let $(Y,\alpha)$ be a strict contact manifold, and let $\R \times Y$ be its symplectization, with $\R$-coordinate $r$.
An almost complex structure $J$ on $\R \times Y$ is {\em admissible} or {\em cylindrical} if it is invariant under $r$-translations, sends $\bdy _r$ to $R_\alpha$, and restricts to a $d\alpha$-compatible almost complex structure on the contact distibution $\xi = \ker \alpha$ on $\{0\} \times Y$.
Note that such an almost complex structure is compatible with the symplectic form $\omega = d(e^r\alpha)$ on $\R \times Y$, i.e. $\omega(J-,-)$ is symmetic and nondegenerate on each tangent space, but it also satisfies an additional $\R$-symmetry. 

Similarly, suppose that $X$ is a strong symplectic cobordism, with corresponding symplectic form $\omega$ and contact forms $\alpha^\pm$ on $\bdy^{\pm}X$, and let ${\wh{X} = \left(\R_{\leq 0} \times \bdy^-X\right) \cup X \cup \left( \R_{\geq 0} \times \bdy^+X \right)}$ denote its symplectic completion.
An almost complex structure $J$ on $X$ is {\em admissible} if it is $\omega$-compatible on $X$, and it is cylindrical when restricted to the ends $\R_{\geq 0} \times \bdy^+X$ and $\R_{\leq 0} \times \bdy^-X$.

\subsubsection{SFT moduli spaces}\label{subsubsec:SFT_mod_sp}

The main analytical tool in this paper is the study of moduli spaces of punctured pseudoholomorphic curves \`a la symplectic field theory. 
We refer the reader to \cite{BEHWZ, abbas2014introduction} for more of the technical details, and we also recommend \cite{wendl2016lectures} for an excellent recent treatment. Since the setup here is quite similar to that of \cite[\S 3.2]{HSC}, we give here only a short summary to set our notation.

\sss

Let $(Y,\alpha)$ be a nondegenerate strict contact manifold, and let $J$ be an admissible almost complex structure on its symplectization $\R \times Y$.  
Suppose that we have two collections of Reeb orbits $\Gamma^+ = (\gamma_1^+,\dots,\gamma^+_{s^+})$ and $\Gamma^- = (\gamma^-_1,\dots,\gamma^-_{s^-})$ in $Y$, for some $s_+,s_- \in \Z_{\geq 0}$. 
We let $\calM^J_{Y}(\Gamma^+;\Gamma^-)$ denote the moduli space of $J$-holomorphic\footnote{We sometimes refer to $J$-holomorphic curves as  ``pseudoholomorphic curves'' or simply ``curves'' if the almost complex structure is unspecified or implicit. Similarly, we will also sometimes omit $J$ from our moduli space notation.} genus zero\footnote{All curves considered in this paper are genus zero and hence we will generally suppress the genus from the notation.} curves in $\R \times Y$, with $s^+$ punctures which are positively asymptotic to the Reeb orbits $\gamma^+_1,\dots,\gamma^+_{s^+}$, and $s^-$ punctures which are negatively asymptotic to the Reeb orbits $\gamma^-_1,\dots,\gamma^-_{s^-}$.
Note that such curves (called ``asymptotically cylindrical'' in \cite{wendl2016lectures}) are proper, and the conformal structure on the domain (as a sphere with $(s^+ + s^-)$-punctures) is unconstrained. 
The $\R$-invariance of $J$ induces a corresponding $\R$-action on $\calM^J_Y(\Gamma^+;\Gamma^-)$
which is free away from {\em trivial cylinders}, i.e. cylinders of the form $\R \times \gamma \subset \R \times Y$ with $\gamma$ a Reeb orbit in $Y$.
We denote the quotient by $\calM^J_Y(\Gamma^+;\Gamma^-) / \R$.

Given a curve $u \in \calM^J_Y(\Gamma^+;\Gamma^-)$, we define its {\em energy} by
\begin{align}
E(u) := \int_u d\alpha.
\end{align}
Note that this is {\em not} quite the same as the symplectic area $\int_u d(e^r\alpha)$, which is always infinite.
Nevertheless, we have $E(u) \geq 0$, with equality if and only if $u$ is a branched cover of a trivial cylinder.
By Stokes' theorem, we have 
\begin{align}
E(u) = \sum_{i=1}^{s^+}\calA_{\alpha}(\gamma_i^+) - \sum_{i=1}^{s^-}\calA_{\alpha}(\gamma_i^-).
\end{align}
In particular, $u$ must have at least one positive puncture. Also, in the case $s^+ = s^- = 1$, we have $E(u) = 0$ if and only if $\gamma_1^+ = \gamma_1^-$ and $u$ is a trivial cylinder.

\sss

Similarly, let $X$ be a strong symplectic cobordism, with nondegenerate contact forms $\alpha^\pm$ on $\bdy^\pm X$, and let $J$ be an admissible almost complex structure on its symplectic completion $\wh{X}$.
For a collection of Reeb orbits $\Gamma^+ = (\gamma_1^+,\dots,\gamma^+_{s^+})$ in $\bdy^+X$ and $\Gamma^- = (\gamma^-_1,\dots,\gamma^-_{s^-})$ in $\bdy^-X$, we let $\calM^J_X(\Gamma^+;\Gamma^-)$ denote the moduli space of $J$-holomorphic curves in $\wh{X}$ with $s^+$ punctures positively asymptotic to $\gamma^+_1,\dots,\gamma^+_{s^+}$ and $s^-$ punctures negatively asymptotic to $\gamma^-_1,\dots,\gamma^-_{s^-}$.
By slight abuse of notation, we will often suppress the completion process from the discussion and refer to elements of $\calM^J_X(\Gamma^+;\Gamma^-)$ simply as ``curves in $X$''.

In the case that $X$ is a {\em symplectic filling}, i.e. $\bdy^-X = \nil$, note that curves in $X$ cannot have negative ends, and we denote the moduli space with positive asymptotics $\Gamma^+ = (\gamma^+_1,\dots,\gamma^+_{s^+})$ by $\calM_X^J(\gamma^+_1,\dots,\gamma^+_{s^+})$ without risk of confusion.
Similarly, if $X$ is a {\em symplectic cap}, i.e. $\bdy^+X = \nil$, we denote the moduli space of curves in $X$ with negative asymptotics $\Gamma^- = (\gamma^-_1,\dots,\gamma^-_{s^-})$ by $\calM_X^J(\gamma^-_1,\dots,\gamma^-_{s^-})$.

We define the energy of a curve $u \in \calM_X^J(\Gamma^+;\Gamma^-)$ by
\begin{align}
E(u) :=  \int_u \check{\omega},
\end{align}
giving by integrating over $u$ the piecewise smooth two-form 
\begin{align}
\check{\omega} := (d\alpha^+)|_{\R_{\geq 0} \times \bdy^+X} + \omega|_X + (d\alpha^-)|_{\R_{\leq 0} \times \bdy^-X}.
\end{align}
Note that for $J$ admissible and $u \in \calM_X^J(\Gamma^+;\Gamma^-)$ we have $E(u) \geq 0$, with $E(u) = 0$ if and only if $u$ is a constant map.
If $X$ is furthermore a Liouville cobordism, we have by Stokes' theorem
\begin{align}
E(u) = \sum_{i=1}^{s^+}\calA_{\alpha^+}(\gamma_i^+) - \sum_{j=1}^{s^-}\calA_{\alpha^-}(\gamma_j^-).
\end{align}
In particular, $u$ must have at least one positive end.

\sss

Let $H_2(Y; \Gamma^+ \cup \Gamma^-)$ denote the set of $2$-chains in $Y$ with boundary $\sum_{i=1}^{s^+} \gamma_i^+ - \sum_{j=1}^{s^-}\gamma_j^-$, modulo boundaries of $3$-chains (c.f. \cite[\S3.1]{Hlect}).
This forms a torsor over $H_2(Y)$.
A curve $u \in \calM^J_Y(\Gamma^+;\Gamma^-)$ has a well-defined homology class
$[u] \in H_2(Y;\Gamma^+ \cup \Gamma^-)$, and for a given class $A \in H_2(Y;\Gamma^+ \cup \Gamma^-)$ we have the subspace
$\calM^J_{Y,A}(\Gamma^+;\Gamma^-) \subset \calM^J_Y(\Gamma^+;\Gamma^-)$ consisting of all curves lying in the class $A$.
Similarly, we denote by $H_2(X;\Gamma^+ \cup \Gamma^-)$ the set of $2$-chains in $X$ with boundary $\sum_{i=1}^{s^+} \gamma_i^+ - \sum_{j=1}^{s^-}\gamma_j^-$, modulo boundaries of $3$-chains,
and for a homology class $A \in H_2(X;\Gamma^+ \cup \Gamma^-)$ we have the corresponding subspace
$\calM^J_{X,A}(\Gamma^+;\Gamma^-) \subset \calM^J_X(\Gamma^+;\Gamma^-)$.
The energy of a pseudoholomorphic curve $u$ in $X$ is determined by its homology class $[u] \in H_2(X;\Gamma^+ \cup \Gamma^-)$.

\sss

We will also sometimes need to consider parametrized moduli spaces of pseudoholomorphic curves. For example, let $\{J_t\}_{t \in [0,1]}$ be a (smooth) one-parameter family of admissible almost complex structures on the symplectization $\R \times Y$.
We denote by $\calM_{Y}^{\{J_t\}}(\Gamma^+;\Gamma^-)$ the parametrized moduli space consisting of all pairs $(t,u)$, with $t \in [0,1]$ and $u \in \calM_Y^{J_t}(\Gamma^+;\Gamma^-)$.
Similarly, if $\{J_t\}_{t \in [0,1]}$ is a one-parameter family of admissible almost complex structures on the strong symplectic cobordism $X$, we denote the corresponding parametrized moduli space by $\calM_X^{\{J_t\}}(\Gamma^+;\Gamma^-)$.

\subsubsection{SFT compactness and neck stretching}\label{subsubsec:neck_stretching}

The SFT compactness theorem, which comes in several variants, is the counterpart for punctured pseudoholomorphic curves of Gromov's compactness theorem for closed curves.
It provides natural compactifications of each of the above moduli spaces. Roughly, in addition to the nodal degenerations which appear in the closed curve case, punctured curves can degenerate into multilevel {\em pseudoholomorphic buildings}.
For example, a typical element of the compactification of $\calM^J_{Y,A}(\Gamma^+;\Gamma^-)/\R$ consists of some number $l \geq 1$ of levels in the symplectization $\R \times Y$. Each level consists of one or more $J$-holomorphic curve components\footnote{Here by {\em component} we mean irreducible component, i.e. a curve whose domain is smooth and connected. For example, a cylinder with an attached sphere bubble consists of two components. We will use the term ``curve component'' when we wish to emphasize that there is a single component, as opposed to a nodal curve or building. By contrast, we will use the term ``configuration'' when we wish the emphasize the possibility of several components or levels.} in $\R \times Y$, such that the Reeb orbit asymptotics of adjacent levels are matched, and the total domain after gluing along paired punctures is a sphere with $s^+ + s^-$ punctures.
Moreover, the total homology class of the configuration is $A \in H_2(Y;\Gamma^+\cup\Gamma^-)$, the positive asymptotics of the top level are given by $\Gamma^+$, and the negative asymptotics of the bottom level are given by $\Gamma^-$.
Each curve component is defined up to biholomorphic reparametrization, and each level is defined up to translation in the $\R$ direction.
In addition to disallowing constant closed components with two or fewer special points, the SFT stability condition also disallows symplectization levels consisting only of trivial cylinders.

Similarly, a typical element of the compactification $\ovl{\calM}_{X,A}^J(\Gamma^+;\Gamma^-)$ of 
$\calM_{X,A}^J(\Gamma^+;\Gamma^-)$ consists of a pseudoholomorphic building with some number (possibly zero) of levels in the symplectization $\R \times \bdy^+X$, a single level in $X$, and some number (possibly zero) of levels in the symplectization $\R \times \bdy^- X$, subject to the  same conditions as above. 
Here the curve components in the $X$ level are $J$-holomorphic, whereas the components in $\R \times \bdy^{\pm}X$ are $J^{\pm}$-holomorphic, where $J^{\pm}$ are the cylindrical almost complex structures naturally determined by restricting $J$.
Here each of the symplectization levels is again defined only up to $\R$-translation, whereas the level in $X$ is defined without any quotient.

\sss

For a parametrized moduli space such as $\calM_{X,A}^{\{J_t\}}(\Gamma^+;\Gamma^-)$, the SFT compactification $\ovl{\calM}_{X,A}^{\{J_t\}}(\Gamma^+;\Gamma^-)$ is defined as the union over pairs $(C,t)$ for $C \in \ovl{\calM}_{X,A}^{J_t}(\Gamma^+;\Gamma^-)$ and $t \in [0,1]$. 
A variation on this called {\em stretching the neck} constitutes a fundamental tool in symplectic field theory. Namely, $X$ be a strong symplectic cobordism (this includes the case that $X$ is closed) with symplectic form $\omega$, and let $Y \subset X$ be a separating codimension one closed submanifold which is {\em contact type}, i.e. there is a one-form $\la$ defined near $Y$ satisfying $d\la = \omega$, and such that the Liouville vector field $V_\la$ is transverse to $Y$.
Following e.g. \cite[\S3.4]{BEHWZ} (see also \cite[Lem. 2.4]{cieliebak2018symplectic}), we can define a family of almost complex structures $\{J_t\}_{t \in [0,1)}$ on $\wh{X}$ which roughly has the effect of stretching out $(-\eps,\eps) \times Y$ to $(-R_t,R_t) \times Y$, with $\lim\limits_{t \rightarrow 1} R_t = \infty$, such that $J_t$ is cylindrical on $(-R_t,R_t) \times Y$.
More precisely let $J$ be an admissible almost complex structure on $X$ which is cylindrical on a small neighborhood $U$ of $Y$ which is identified with $(-\delta,\delta) \times Y$ for some $\delta > 0$ under the flow of $V_\la$, with $J$ invariant under translations in the first factor. 
Let $J_{\R \times Y}$ denote the induced cylindrical almost complex structure on the full symplectization $\R \times Y$.
Let $F_t: (-R_t,R_t) \rightarrow (-\delta,\delta)$ be a family of increasing diffeomorphisms for $t \in [0,1)$, such that $F_t$ has slope $1$ near $-R_t$ and $R_t$.
We then set $J_t$ to be $(F_t \times \id)_*(J_{\R \times Y}|_{(-R_t,R_t)\times Y})$ on $U$ and $J|_{X \setminus U}$ on $X \setminus U$. We assume that $R_0 = \eps$ and $F_0$ is the identity function, so that we have $J_0 = J$.

Although $\lim\limits_{t \rightarrow 1}J_t$ is not a well-defined almost complex structure on $\wh{X}$, we nevertheless have a compactified moduli space
$\ovl{\calM}^{\{J_t\}}_{X,A}(\Gamma^+;\Gamma^-)$. 
This has a well-defined projection to $[0,1]$, where the fiber over $1$ corresponds to pseudoholomorphic buildings in the {\em broken symplectic cobordism} $X^- \circledcirc X^+$, 
where $X^-$ and $X^+$ correspond to the bottom and top components of $X \setminus Y$.
More precisely, a typical element of the fiber over $1$ in $\ovl{\calM}^{\{J_t\}}_{X,A}(\Gamma^+;\Gamma^-)$ is a pseudoholomorphic building with some number (possibly zero) of levels in the symplectization $\R \times \bdy^+ X$ (this is vacuous if $\bdy^+X = \nil$), a single level in $X^+$, some number (possibly zero) of levels in the symplectization $\R \times Y$, a single level in $X^-$, and some number (possibly zero) of levels in the symplectization $\R \times \bdy^-X$ (this is vacuous if $\bdy^-X = \nil$). This configuration is subject to similar matching and stability conditions to the above.

\subsubsection{Regularity for simple curves}\label{subsubsec:regularity}

Ideally one would like to say for example that $\calM_{X,A}^J(\Gamma^+;\Gamma^-)$ is a smooth manifold and that $\ovl{\calM}_{X,A}^J(\Gamma^+;\Gamma^-)$ is a smooth compactification, at least for a generically\footnote{Following standard usage, we will say that subset of admissible almost complex structures is {\em generic} if it is comeager, i.e. it contains a countable intersection of open dense subsets (c.f. the Baire category theorem).} chosen admissible almost complex structure $J$.
Indeed, the Cauchy--Riemann equation defining a curve $u \in \calM_{X}^J(\Gamma^+;\Gamma^-)$ is a Fredholm problem of index
\begin{align}
\ind(u) = (n-3)(2- s^+ - s^-) + \sum_{i=1}^{s^+} \cz_\tau(\gamma_i^+) - \sum_{j=1}^{s^-} \cz_\tau(\gamma_j^-) + 2c_1^\tau(u). 
\end{align}
Here as usual we put $\dim X = 2n$, $\tau$ corresponds to a choice of framing of each of the involved Reeb orbits, and $c_1^\tau(u)$ denotes the first Chern number of $u$ relative to this choice of framings, i.e. the signed count of zeros of a section of $u^*TX$ which is constant with respect to the given framings.
Then $\ind(u)$ gives the expected (or ``virtual'') dimension of $\calM_{X}^J(\Gamma^+;\Gamma^-)$ near the curve $u$. 

If $u$ is {\em regular}, i.e. its linearized Cauchy--Riemann operator is surjective, then by a version of the implicit function theorem it can be shown that $\calM_{X,A}^J(\Gamma^+;\Gamma^-)$ is indeed a smooth manifold near $u$. Unfortunately, we cannot in general arrange that all elements $u \in \calM_{X,A}^J(\Gamma^+;\Gamma^-)$ are regular for generic $J$, due to the existence of multiple covers, which frequently appear with higher than expected dimension (e.g. they appear despite having negative index). 
Defining SFT in generality therefore necessitates the use of virtual perturbations (c.f.  Remark~\ref{rmk:virtual}).

Nevertheless, by standard techniques we can fortunately achieve regularity for {\em simple} curves. 
Namely, by \cite[Thm. 6.19]{wendl2016lectures}, any nonconstant asymptotically cylindrical $J$-holomorphic curve can be factored into a degree $\kappa$ holomorphic map between punctured Riemann surfaces, followed by a $J$-holomorphic curve which is an embedding apart from finitely many critical points and self-intersection points. We call $\kappa$ the {\em covering multiplicity} of $u$, and $u$ is simple if and only if we have $\kappa = 1$.
We denote the subspace of simple curves by $\calM^{J,\simp}_{X,A}(\Gamma^+;\Gamma^-) \subset \calM^J_{X,A}(\Gamma^+;\Gamma^-)$.

Any simple curve $u$ is {\em somewhere injective}, i.e. there is a point $z$ in its domain such that $du|_z \neq 0$ and $u^{-1}(u(z)) = z$ (see e.g. \cite[\S2.5]{JHOL}). 
A standard argument shows that, for any neighborhood $U$ of $u(z)$ and a generic perturbation $\wt{J}$ of $J$ supported in $U$, any $\wt{J}$-holomorphic curve in $X$ with a somewhere injective point mapping to $U$ is regular.
By leveraging this idea with some care, one can show that every simple curve is regular for generic $J$ (see \cite[\S 7.1]{wendl2016lectures}), and hence 
$\calM^{J,\simp}_{X}(\Gamma^+;\Gamma^-)$ is a smooth oriented\footnote{For a discussion of how to assign orientations SFT moduli spaces see e.g. \cite[\S 11]{wendl2016lectures}. Strictly speaking this only applies when all asymptotic Reeb orbits are good (see e.g. \cite[Def. 11.6]{wendl2016lectures}), which will be the case for all examples considered in this paper.}
manifold of dimension $\ind(u)$.

Similarly, simple curves in a symplectization are regular for generic $J$. In the case of a symplectization, some care is needed due to the $\R$-symmetry (c.f. the discussion in \cite[\S 8]{wendl2016lectures}). Assuming there are no trivial cylinders in the moduli space $\calM^{J,\simp}_Y(\Gamma^+;\Gamma^-)$, the quotient $\calM^{J,\simp}_Y(\Gamma^+;\Gamma^-)/\R$ is a smooth oriented manifold of dimension $\ind(u) -1$.
In particular, since this is necessarily nonnegative, nontrivial simple curves in a symplectization must appear with index at least $1$ for generic admissible $J$. 

In the case of a parametrized moduli space $\calM^{\{J_t\}}_{X}(\Gamma^+;\Gamma^-)$, we also have that simple curves lying over $t \in (0,1)$ are regular, provided that the homotopy $\{J_t\}$ is generic.
Note that regularity of $(t,u) \in \calM^{\{J_t\}}_{X}(\Gamma^+;\Gamma^-)$ does {\em not} imply regularity of $u$ as a $J_t$-holomorphic curve. 
Rather, if $(t,u)$ is regular for some $t \in (0,1)$, 
then $\calM^{\{J_t\}}_{X}(\Gamma^+;\Gamma^-)$ is a smooth oriented manifold of dimension $\ind(u) + 1$
near $(t,u)$, where $\ind(u)$ denotes the Fredholm index of $u$ as a $J_t$-holomorphic curve.
In particular, for $(t,u) \in \calM^{\{J_t\},\simp}_{X}(\Gamma^+;\Gamma^-)$ we must have $\ind(u) \geq -1$.

\sss

We will also need to know something about the structure of the SFT compactifications of one-dimensional moduli spaces, which we expect to be compact one-dimensional manifolds with boundary. Provided that all relevant curves are simple, this is indeed the case, with the necessary charts near the boundary provided by the procedure of gluing along cylindrical ends. For example, consider a generic homotopy $\{J_t\}_{t \in [0,1]}$, and assume that each of the curve components appearing in the compactification $\ovl{\calM}^{\{J_t\}}_{X}(\Gamma^+;\Gamma^-)$ is simple.
Then $\ovl{\calM}^{\{J_t\}}_{X}(\Gamma^+;\Gamma^-)$ is an oriented one-dimensional manifold whose boundary contains $\calM^{J_1}_X(\Gamma^+;\Gamma^-)$ and 
$\calM^{J_1}_X(\Gamma^+;\Gamma^-)$ (with its opposite orientation).
We defer the reader to \cite[\S 10.2.4]{wendl2016lectures} and the references therein for a more detailed discussion.

\subsubsection{Formal curves and anchors}

As a convenient device for bookkeeping, we will make use of the notion of {\em formal curves}. 
Namely, in a strong symplectic cobordism $X$, a formal curve $C$ consists of a nodal punctured surface $\Sigma$, with each puncture designated as either positive or negative, together with, for each irreducible component of $\Sigma$, the following data:
\begin{itemize}
\item
a collection of Reeb orbits $\Gamma^+= (\gamma^+_1,\dots,\gamma_{s^+}^+)$ in $\bdy^+X^+$ corresponding to the positive punctures of $\Sigma$
\item a collection of Reeb orbits $\Gamma^- = (\gamma^-_1,\dots,\gamma^-_{s^-})$ in $\bdy^-X$ corresponding to the negative punctures of $\Sigma$
\item a homology class $A_{\Sigma} \in H_2(X; \Gamma^+ \cup \Gamma^-)$.
\end{itemize}
Formal curves in a symplectization $\R \times Y$ of a strict contact manifold $Y$ are defined similarly, except that both the positive and negative Reeb orbits lie in $Y$, with homology classes $A_{\Sigma} \in H_2(Y; \Gamma^+ \cup \Gamma^-)$.
We will also additionally allow $C$ to have extra marked points decorated by ``formal'' local tangency constraints of the form $\lll \T^{m}p\rrr$ for some $m \in \Z_{\geq 0}$.
The formal curves considered in this paper will typically be connected and without any nodes, and they will always have total genus (after resolving nodes) zero.

Note that a formal curve $C$ has a well-defined Fredholm index.
For instance, in the case that $C$ is connected and without nodes or additional constraints we put
\begin{align}
\ind(C) := (n-3)(2-s^+-s^-) + \sum_{i=1}^{s^+}\cz_\tau(\gamma_i^+) - \sum_{j=1}^{s_-} \cz_\tau(\gamma_j^-) + 2c_1^\tau(A).
\end{align}
Any honest\footnote{We will call a curve ``honest'' when we wish to emphasize that it is not formal.} curve $u \in \calM^J_{X,A}(\Gamma^+;\Gamma^-)$ can be viewed as a formal curve, but a formal curve need not have any pseudoholomorphic representative. 
Consider strong symplectic cobordisms $X^+$ and $X^-$ with common contact boundary $Y = \bdy^+X^- = \bdy^-X^+$, and let $X^- \circledcirc X^+$ denote the strong symplectic cobordism obtained by concatenating them along $Y$.
Given pseudoholomorphic curves $u^- \in \calM_{X^-}(\Gamma;\Gamma^-)$ and $u^+ \in \calM_{X^+}(\Gamma^+;\Gamma)$ with shared Reeb orbit asymptotics $\Gamma$,
we can {\em formally glue} along the orbits of $\Gamma$ in a natural way to obtain a formal curve $C$ in $X^- \circledcirc X^+$.  
Importantly, note that the index is additive under this operation, i.e. we have $\ind(C) = \ind(u^-) + \ind(u^+)$.

\sss

Another important device from symplectic field theory is that of {\em anchors}, which are used to correct naively defined structure maps. For example, if $X$ is a Liouville domain, the linearized contact homology $\chlin(X)$ can be viewed as the anchor-corrected version of cylindrical contact homology, the latter not typically being well-defined without additional assumptions (see e.g. the discussion in \cite[\S 1]{HuN}).  
Suppose that $X^-$ and $X^+$ are strong symplectic cobordisms with common contact boundary $Y = \bdy^+X^- = \bdy^-X^+$, and assume that we have $\bdy^-X^- = \nil$. A pseudoholomorphic curve in $X^+$, anchored in $X^-$, consists of a two-level pseudoholomorphic building, with top level in $X^+$ and bottom level in $X^-$, such that each positive end of a component in $X^-$ is paired with a negative end of a component in $X^+$, but we allow unpaired negative ends of components in $X^+$.
We will refer to the unpaired negative ends of components in $X^+$ as the negative ends of the anchored curve.
The index of an anchored curve is by definition the sum of the indices of each component, and the topological type is that of the punctured surface given by gluing together the components of the domain along paired punctures.
Intuitively, we view a curve in $X^+$ anchored in $X^-$ as a curve in $X^+$ with some extra ``tentacles'' extending into $X^-$, noting that the level in $X^+$ may consist of more than one component. 
Similarly, if $X$ is a Liouville domain, we define curves in the symplectization $\R \times \bdy X$, anchored in $X$, in essentially the same way, but now with top level in $\R \times \bdy X$.
We can also speak of anchored formal curves, defined similarly but with each level only a formal curve (i.e. we have two levels, one in $X^+$ and one in $X^-$, each consisting of one or more formal curve components).

\subsection{Local tangency constraints}

In order to probe higher dimensional moduli spaces of pseudoholomorphic curves in the Weinstein domains $X_{\vec{d}}^{2n}$, we will need to impose additional geometric constraints to cut down dimensions.
Although there are a number of possible geometric constraints we could impose, such as multiple point constraints or blowup constraints (see e.g. the discussion in \cite[\S 5]{HSC}), the most fruitful for us are local tangency constraints.
The basic idea, pioneered by Cieliebak--Mohnke \cite{CM2}, is to require curves to pass through a generically chosen point $p$ and to be tangent to specified order to a generically chosen germ of a divisor $D$ passing through $p$.
For example, if $M$ is a closed $2n$-dimensional symplectic manifold with $A \in H_2(M)$, the count of pseudoholomorphic curves in $M$ representing the class $A$ and with tangency order $m$ (i.e. contact order $m+1$) to $D$ at $p$ gives rise to a Gromov--Witten type invariant, denoted by $\gw_{M,A}\lll \T^m p\rrr \in \Q$, which is independent of all choices. 
Note that the local tangency constraint $\lll \T^m p\rrr$ cuts down the expected dimension by $2n + 2m - 2$.
These counts are defined in \cite{McDuffSiegel_counting} using classical transversality techniques for semipositive closed symplectic manifolds, in which case they are integer-valued, computable by an explicit algorithm at least in dimension four.
We can also incorporate local tangency constraints into moduli spaces of punctures curves, and we denote the analogues of the aforementioned moduli spaces by $\calM_{X,A}^J(\Gamma^+;\Gamma^-)\lll \T^m p \rrr$, $\calM_{Y,A}^J(\Gamma^+;\Gamma^-)\lll\T^mp\rrr$, and so on.

As explained in \cite[\S 4]{McDuffSiegel_counting}, one can equivalently replace the local tangency constraint with a skinny ellipsoidal constraint. Namely, after removing a small neighborhood of $p$ which is symplectomorphic to a sufficiently skinny $2n$-dimensional ellipsoid, curves satisfying the constraint $\lll \T^m p \rrr$ are substituted by curves with an additional negative puncture which is asymptotic to the $(m+1)$-fold cover of the smallest action Reeb orbit in the boundary of the skinny ellipsoid. This approach, while somewhat less geometrically natural, has the advantage of casting the constraint entirely within the standard framework of asymptotically cylindrical curves in strong symplectic cobordisms. For easy of exposition, we stick with the local tangency terminology and notation.

\subsection{Weinstein structure on a divisor complement}\label{subsec:div_compl}

In this subsection, we discuss the geometry and topology of complements of divisors in closed symplectic manifolds.
We first recall that there is a natural Weinstein structure on the complement of any ample simple normal crossing divisor in a smooth projective complex variety.
We then formulate Theorem~\ref{thm:div_compl1}, which gives a precise model for the Reeb dynamics.
Together with Proposition~\ref{prop:compl_cz}, this gives an explicit understanding of the actions, first homology classes, and Conley--Zehnder indices of the corresponding closed Reeb orbits.

Let $M^{2n}$ be a smooth complex projective variety,
and let $D \subset M$ be an ample divisor, i.e. $D = \sigma^{-1}(0)$ is the zero set of a holomorphic section $\sigma$ of an ample line bundle $\calL \rightarrow M$. 
We assume that $D$ is a simple normal crossing divisor, i.e. each irreducible component is smooth, and near each point of $D$ there are local holomorphic coordinates $z_1,\dots,z_n$ such that $D$ is cut out by the equation $z_1\dots z_k = 0$ for some $1 \leq k \leq n$.
Recall that ampleness of $\calL$ is equivalent to positivity, i.e. the existence of a Hermitian inner product $\langle -,-\rangle$ on $\calL$ such that curvature with respect to the Chern connection is a K\"ahler form.
Given a holomorphic section $\sigma$, this is equivalent to the function $\phi := -\log ||\sigma||$ being a strictly plurisubharmonic function on $X := M \setminus D$, where $||-||$ is the norm corresponding to $\langle-,-\rangle$.
In this case, $-dd^\C\phi$ extends to a K\"ahler form on $M$.

By \cite[Lem. 4.3]{Seidel_biased_view}, the critical points of $\phi$ form a compact subset of $X$.
In particular, after a small perturbation we can assume that $\phi$ is a Morse function. Then since $\phi$ is exhausting, $(X,J)$ is a Stein manifold, and the restriction to $\{\phi \leq C\}$ is a Stein domain for any $C > 0$ sufficiently large.
Note that the Liouville vector field dual to $\la := -d^{\C}\phi$ is not complete, though this can easily be rectified by postcomposing $\phi$ with a suitable function $\psi: \R \rightarrow \R$ (c.f. \cite[Prop. 2.11]{cieliebak2012stein}), after which $(M \setminus D,\la,\psi \circ \phi)$ becomes a Weinstein manifold.
As explained in \cite[\S 4a]{Seidel_biased_view}, it follows from Hironaka's resolution of singularities that any smooth complex affine variety can be presented in this way as the complement of an ample simple normal crossing divisor in a smooth projective variety, and moreover the resulting Stein manifold is independent of all choices (the compactifying divisor, the Hermitian metric, etc) up to Stein deformation equivalence.

\sss

Now let $D_1,\dots,D_k$ denote the irreducible components of an ample normal crossing divisor $D$ in a smooth complex projective variety $M$,
and consider a nonzero tuple $\vec{v} = (v_1,\dots,v_k) \in \Z_{\geq 0}^k \setminus \{\vec{0}\}$, which we suppose has exactly $r \geq 1$ nonzero components.
 For future reference, we introduce some additional notation:
\begin{itemize}
\item Let $D_{\vec{v}}$ denote the intersection of all those $D_i$ for which $v_i \neq 0$.
\item Let $\mr{D}_{\vec{v}} := D_{\vec{v}} \setminus \bigcup\limits_{i\;:\; v_i = 0}D_i$ denote the open stratum of $D_{\vec{v}}$.
\item Let $ND_{\vec{v}} \rightarrow D_{\vec{v}}$ denote the normal bundle to $D_{\vec{v}} \subset M$.
There is a natural reduction of the structure group to $U(1)^{\times r}$, and in particular we can locally identify the fibers with $\C^{\times r}$ in a manner which preserves the splitting.
We will also sometimes identify $ND_{\vec{v}}$ with a small neighborhood of $D_{\vec{v}}$ in $M$
\item Let $S_{\vec{v}}$ denote the $\mathbb{T}^{r}$ bundle over $D_{\vec{v}}$ given by $(ND_{\vec{v}} \setminus D) / \R_{> 0}^r$ and let $\mr{S}_{\vec{v}} \rightarrow \mr{D}_{\vec{v}}$ denote its restriction to $\mr{D}_{\vec{v}}$
\item Let $S_{\vec{v}}/S^1$ denote the $\mathbb{T}^{r-1}$-bundle over $D_{\vec{v}}$ given by quotienting $S_{\vec{v}}$ by the restriction of the natural free $\mathbb{T}^r$ action to the circle $\{ t\vec{v}\;:\; t \in \R\} \subset \R^r/\Z^r =  \mathbb{T}^r$,
 and let $\mr{S}_{\vec{v}}/S^1$ denote its restriction to $\mr{D}_{\vec{v}}$.
\item For $i = 1,\dots,k$, let $c_i$ denote a small disk in $M$ which intersects $D_i$ once transversally and negatively and is disjoint from the other divisor components,
and let $[\bdy c_i] \in H_1(X)$ denote the homology class of its boundary.
\end{itemize}

\sss

In order to discuss the action filtration on a divisor complement, we also recall the notion of wrapping numbers.
\begin{definition}\cite{mcleanslides}
Assume that $M$ is a smooth complex projective variety with a divisor $D = \sigma^{-1}(0)$, where $\sigma$ is a holomorphic section of an ample line bundle $\calL \rightarrow M$.
For $i = 1,\dots,k$, the $i$th {\em holomorphic wrapping number} is minus the vanishing order of $\sigma$ along $D_i$. 
\end{definition}
\NI There is also a purely symplectic analogue given as follows. Following  \cite{mclean2012growth, tehrani2018normal}, recall that a {\em symplectic simple normal crossing (SNC) divisor} $D$ in a symplectic manifold $(M,\omega)$ consists of a collection of transversely intersecting symplectic submanifolds $D_1,\dots,D_k \subset M$ such that each partial intersection $D_I := \bigcap\limits_{i \in I}D_i,\; I \subset \{1,\dots,k\}$, is a symplectic submanifold, and the ``symplectic orientation'' on $D_I$ agrees with the ``intersection orientation''.
We note that this last condition is equivalent to the existence of a compatible almost complex structure $J$ for $(M,\omega)$ which makes each $D_i$ $J$-holomorphic.
\begin{definition}\cite{mcleanslides}
Let $(M,\omega)$ be a symplectic manifold with a symplectic SNC divisor $D = D_1 \cup \dots \cup D_k$, and let $\la$ be a one-form on $M \setminus D$ with $d\la = \omega|_{M \setminus D}$.
Let $N$ be the closure of a small neighborhood of $D$ with smooth boundary which deformation retracts onto $D$, and let $\rho: N \rightarrow [0,1]$ be a function which is equal to $1$ near $D$ and vanishes near $\bdy N$.
Let $\wt{\omega}$ be the two-form on $N$ given by $\omega$ near $D$ and $d(\rho \la)$ away from $D$.
The {\em symplectic wrapping numbers} $\ww_1,\dots,\ww_k$ are the unique coefficients such that $-\sum_{i=1}^k \ww_i[D_i] \in H_{2n-2}(N;\R)$ is Poincar\'e--Lefschetz dual to $[\wt{\omega}] \in H^2(N,\bdy N;\R)$.
\end{definition}

\sss

The following theorem summarizes most of what we will need to know about the symplectic geometry of divisor complements:
\begin{thm}[{see \cite[\S5]{mclean2012growth} or \cite[Thm. 2.17]{tehrani2018normal}}]\label{thm:div_compl1}
Fix $\CC \in \R_{> 0}$ arbitrarily large and $\eps \in \R_{>0}$ arbitrarily small. 
Let $M$ be a smooth complex projective variety with an ample simple normal crossing divisor $D$, and let $(X,\la,\phi)$ denote the associated Weinstein domain corresponding to the divisor complement. 
For each $\vec{v} \in \Z_{\geq 0}^k \setminus \{\vec{0}\}$, pick an exhausting Morse function\footnote{Note that $\mr{S}_{\vec{v}}/S^1$ is typically noncompact, hence we must specify the behavior at infinity.
Since $f_{\vec{v}}$ is exhausting, after choosing a Riemannian metric which is Morse--Smale for $f_{\vec{v}}$, the resulting Morse cohomology is isomorphic to the ordinary cohomology of $\mr{S}_{\vec{v}}/S^1$. By Poincar\'e duality, this is isomorphic the Borel--Moore homology of $\mr{S}_{\vec{v}}/S^1$ after a degree shift.
}
 $f_{\vec{v}}: \mr{S}_{\vec{v}}/S^1 \rightarrow \R$.
After a Weinstein homotopy of $(X,\la,\phi)$ and a deformation of $D$ through symplectic SNC divisors, 
we can find a K\"ahler form $\omega$ on $M$ and an embedding $\iota: X \hookrightarrow M$ such that:
\begin{enumerate}
	\item[(1)] $N := \ovl{M \setminus \iota(X)}$ deformation retracts onto $D$
	\item[(2)] $\iota^*\omega = d\la$, and $\iota_*\la$ extends to a one-form $\wt{\la}$ on $M \setminus D$ such that $d\wt{\la} = \omega|_{M \setminus D}$
	\item[(3)] the symplectic wrapping numbers of $D$ coincide with the holomorphic wrapping numbers. 
\end{enumerate}
Moreover, the contact form $\alpha := \la|_{\bdy X}$ has nondegenerate Reeb dynamics, where:
\begin{enumerate}
	\item[(4)] the Reeb orbits of $(\bdy X,\alpha)$ of period less than $\CC$ are in one-to-one correspondence with the set of critical points $\crit(f_{\vec{v}})$ of $f_{\vec{v}}$ as $\vec{v}$ ranges over $\Z^k_{\geq 0}\setminus \{\vec{0}\}$ such that $-\sum_{i=1}^k v_i\ww_i \leq \CC$.
	\item[(5)] the Reeb orbit $\gamma_{\vec{v}}^A$ corresponding to the tuple $\vec{v} \in \Z^k_{\geq 0}\setminus \{\vec{0}\}$ and critical point $A \in \crit(f_{\vec{v}})$ lies in the homology class $\sum_{i=1}^k v_i[\bdy c_i] \in H_1(X)$
	\item[(6)] the action of $\gamma_{\vec{v}}^A$ is given by $-\sum_{i=1}^k v_i\ww_i$, up to a discrepancy of $\eps$.
\end{enumerate}
\end{thm}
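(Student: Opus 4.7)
The plan is to build the desired K\"ahler form, embedding, and contact data in stages, starting from the natural structures coming from the ample line bundle, then deforming to a standard model near $D$, and finally perturbing to break Morse--Bott degeneracies. Since the theorem is attributed to \cite{mclean2012growth, tehrani2018normal}, the proof largely amounts to assembling results from those references into the stated form.

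First, I would start from the K\"ahler form $\omega_0$ provided by the Hermitian metric on $\calL$ and the associated Liouville form $\la_0 = -d^\C\phi$ on $M\setminus D$, where $\phi = -\log\|\sigma\|$, so that $(X_0,\la_0,\psi\circ\phi)$ is the Weinstein structure on the divisor complement. Next, I would invoke the symplectic neighborhood theorem for SNC divisors (see \cite{tehrani2018normal}) to find, after a Weinstein homotopy of $(X,\la,\phi)$ and a deformation of $D$ through symplectic SNC divisors, a K\"ahler form $\omega$ and a neighborhood $N$ of $D$ in which $\omega$ is put into a standard form compatible with the product-like structure of the $U(1)^{\times r}$-bundles $S_{\vec{v}}$ at each stratum. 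Moser-type arguments give a diffeomorphism realizing the embedding $\iota$ of items (1)--(2). Item (3), the equality of symplectic and holomorphic wrapping numbers, follows because in this standard model the Poincar\'e--Lefschetz dual of $[\wt{\omega}]$ near $D$ is read off from the vanishing order of $\sigma$, which is precisely what defines the holomorphic wrapping numbers.

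Second, I would analyze the Reeb dynamics on $\bdy X$ in this local model. Near a stratum $D_{\vec{v}}$, the contact form restricted to $S_{\vec{v}}$ is a standard connection-type contact form whose Reeb flow is tangent to the $\mathbb{T}^r$-fibers and linearly generated by the directions $\vec{v}$; consequently, closed Reeb orbits of period $<\CC$ are precisely the $S^1$-families corresponding to rational directions $\vec{v}\in\Z_{\geq 0}^k\setminus\{\vec 0\}$ with $-\sum v_i\ww_i \leq \CC$, and these families are parametrized by $\mr{S}_{\vec{v}}/S^1$. At this stage the Reeb flow is only Morse--Bott nondegenerate, so I would now perturb by adding $\eps\cdot\pi^*f_{\vec v}$ (times a cutoff) along each family, where $\pi:\mr{S}_{\vec{v}}\to\mr{S}_{\vec v}/S^1$ is the projection. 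Standard Morse--Bott perturbation theory then yields one nondegenerate orbit $\gamma_{\vec v}^A$ per critical point $A\in\crit(f_{\vec v})$, proving (4). The perturbations at different strata can be arranged inductively from deepest to shallowest so they do not interfere.

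Third, I would compute the homology class and action. The projection of $\gamma^A_{\vec v}$ to the base $D_{\vec v}$ contracts to a point, and the orbit winds $v_i$ times around a meridian $\bdy c_i$ of each component $D_i$ it meets, giving $[\gamma^A_{\vec v}] = \sum_i v_i[\bdy c_i]$, which is (5). For (6), Stokes' theorem applied to a chain capping $\gamma^A_{\vec v}$ inside $N$ computes $\int_{\gamma^A_{\vec v}}\la$ in terms of $\int\wt\omega$ paired against the capping chain, which by the definition of $\ww_i$ gives $-\sum v_i\ww_i$ exactly; the perturbation contributes only an $O(\eps)$ discrepancy.

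The main obstacle is the construction of the standard neighborhood model simultaneously at all strata of the SNC divisor while preserving the Weinstein type of the complement and the relationship with the given ample line bundle. This requires the full strength of the SNC symplectic neighborhood theorem \cite{tehrani2018normal}, together with a careful inductive Moser argument to interpolate between $\omega_0$ and $\omega$ through K\"ahler forms whose complements remain Weinstein-homotopic; matching the symplectic and holomorphic wrapping numbers is the subtle point, and once this is achieved the Reeb dynamics computation is essentially local and explicit.
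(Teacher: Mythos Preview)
The paper does not actually prove this theorem: it is stated with attribution to \cite{mclean2012growth, tehrani2018normal} and used as a black box throughout the rest of the paper. Your sketch is a reasonable summary of how the argument goes in those references---starting from the plurisubharmonic potential, passing to a standard local model via the SNC symplectic neighborhood theorem, reading off the Morse--Bott Reeb dynamics, and then perturbing---and is consistent with the remarks the paper makes immediately after the theorem (e.g.\ the comment about perturbing Morse--Bott loci and the role of the constant $\CC$). So there is nothing to compare against in the paper itself; your outline matches the intended provenance of the result.
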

\NI In the sequel, we will typically assume that the above theorem has already been applied to a given divisor complement, and by slight abuse of notation we view $\iota$ as an inclusion $X \subset M$ and denote $\wt{\la}$ again by $\la$.

\begin{remark}\hspace{1cm}
\begin{itemize}
\item Note that for any admissible almost complex structure $J$ on the symplectic completion of $N$, any $J$-holomorphic curve in $N$ must intersect $D$. Indeed, otherwise by (2) we can apply Stokes' theorem together with nonnegativity of energy to get a contradiction.
\item For most of the pseudoholomorphic curve arguments in this paper we have an a priori upper bound on the actions of Reeb orbits which could arise. This means we can simply take $\CC$ to be sufficiently large and safely ignore all Reeb orbits of action greater than $\CC$.
\end{itemize}
\end{remark}

\sss

We end this subsection by discussing the Conley--Zehnder indices of the Reeb orbits $\gamma_{\vec{v}}^A$ described in Theorem~\ref{thm:div_compl1}.
In the context of Theorem~\ref{thm:div_compl1}, each Reeb orbit $\gamma_{\vec{v}}^A$ bounds a small spanning disk $u$ in $N$ which has homological intersection $v_i$ with $[D_i]$ for $i = 1,\dots,k$.
\begin{lemma}\label{lem:cz_well_def}
Assume that the Poincar\'e dual to $c_1(TM)$ can be expressed in the form $h_1[D_1] + \dots + h_k[D_k] \in H_{2n-2}(M;\R)$ for some $h_1,\dots,h_k \in \R$. 
Let $\gamma$ be a Reeb orbit in $\bdy X$, and let $u,u': \D^2 \rightarrow M$ be two bounding disks such that $[u] \cdot [D_i] = [u'] \cdot [D_i]$ for $i = 1,\dots,k$.
We have $\cz_u(\gamma) = \cz_{u'}(\gamma)$.
\end{lemma}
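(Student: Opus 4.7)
The plan is to apply the general Conley--Zehnder index change formula from \S\ref{subsubsec:CZ} in a slightly generalized form (for disks in $M$ rather than $X$), and then use the hypothesis to see that the resulting relative Chern number vanishes.

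First I would adapt the setup of \S\ref{subsubsec:CZ} to disks $u \colon \D^2 \to M$ bounding $\gamma$. Since $\D^2$ is contractible, $u^*TM$ admits a unique (up to homotopy) trivialization, which restricts over $\bdy \D^2$ to a trivialization of $\gamma^*TM$. Using the natural splitting $TM|_{\bdy X} \cong \xi \oplus \C\langle R_\alpha, V_\la\rangle$ (with $V_\la$ the Liouville vector field), this induces a trivialization of $\gamma^*\xi$, and hence a well-defined Conley--Zehnder index $\cz_u(\gamma) \in \Z$. The standard relative Chern-number argument (c.f. \S\ref{subsubsec:CZ}) shows that for two such spanning disks $u,u'$ in $M$,
\[
\cz_u(\gamma) - \cz_{u'}(\gamma) = \langle 2c_1(TM), [A]\rangle,
\]
where $[A] := [u \# \bar{u}'] \in H_2(M)$ is the homology class of the $2$-sphere obtained by gluing $u$ to $u'$ with the opposite orientation.

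Next I would compute the right-hand side using the hypothesis $\pd c_1(TM) = \sum_{i=1}^k h_i [D_i]$. By Poincaré duality,
\[
\langle c_1(TM), [A]\rangle = [A] \cdot \pd c_1(TM) = \sum_{i=1}^k h_i\bigl([A] \cdot [D_i]\bigr).
\]
Since $[A] = [u] - [u']$ as $2$-chains (the orientation reversal flips the sign of the contribution of $u'$), additivity of intersection numbers gives
\[
[A] \cdot [D_i] = [u]\cdot [D_i] - [u']\cdot [D_i] = 0
\]
for each $i$ by the hypothesis on $u$ and $u'$. Therefore $\langle c_1(TM),[A]\rangle = 0$, and hence $\cz_u(\gamma) = \cz_{u'}(\gamma)$.

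The only step requiring genuine care is the verification that the difference formula is controlled by $c_1(TM)$ rather than by $c_1(X)$: this is because the trivializations of $\gamma^*\xi$ in question come from extensions over disks in the ambient $M$, so the obstruction to compatibility between two such trivializations is measured by the first Chern class of $TM$ restricted to the glued sphere. Once this identification is in place, the rest is a direct intersection-theoretic calculation. Note that the real coefficients $h_i$ pose no issue, since each intersection $[A]\cdot [D_i]$ is an integer and moreover zero by assumption.
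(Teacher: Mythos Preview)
Your proposal is correct and follows essentially the same argument as the paper: glue the two spanning disks to a sphere, invoke the change-of-trivialization formula $\cz_u(\gamma)-\cz_{u'}(\gamma)=2\langle c_1(TM),[A]\rangle$, and use the Poincar\'e-dual hypothesis together with $[A]\cdot[D_i]=0$ to conclude. Your write-up is in fact slightly more careful than the paper's in spelling out why the relevant Chern class is $c_1(TM)$ (since the disks live in $M$) rather than $c_1(X)$.
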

\begin{proof}
Let $S$ denote the sphere in $N = M \setminus X$ obtained by gluing $u$ to $u'$ with its opposite orientation, and let $[S] \in H_2(N)$ denote the corresponding homology class. 
Note that the homological intersection number $[S] \cdot [D_i]$ vanishes for $i = 1,\dots k$.
Then by \S\ref{subsubsec:CZ} we have
\begin{align}
\cz_{u}(\gamma) - \cz_{u'}(\gamma) = 2 c_1(M) \cdot [S]
= \sum_{i=1}^k h_i [D_i] \cdot[S] = 0.
\end{align}
\end{proof}
By default, we will compute the Conley--Zehnder index of the Reeb orbit $\gamma_{\vec{v}}^A$ via a small spanning disk $u$ as in Lemma~\ref{lem:cz_well_def} which satisfies $[u]\cdot [D_i] = v_i$ for $i = 1,\dots,k$.
We denote this trivialization of $TM$ along the Reeb orbits $\gamma_{\vec{v}}^A$ by $\triv$, and we denote the corresponding Conley--Zehnder index by $\cz_{\triv}$.
\begin{prop}\label{prop:compl_cz}\cite[\S2]{ganatra2020symplectic}
For each $\vec{v} \in \Z_{\geq 0}^k \setminus \{\vec{0}\}$ with $D_{\vec{v}} \neq \nil$, we have
\begin{align}
\cz_{\triv}(\gamma_{\vec{v}}^A) = n-1 - |A| - 2\sum_{i=1}^k v_i.
\end{align}
\end{prop}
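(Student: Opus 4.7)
The plan is to reduce the computation of $\cz_\triv(\gamma^A_{\vec{v}})$ to an additive sum of contributions coming from an explicit decomposition of $TM$ along the Reeb orbit. The CZ index depends only on the germ of the contact form and trivialization near the orbit, so by Theorem~\ref{thm:div_compl1} I may work in a standard local model near $D_{\vec{v}}$, reordering indices so that $v_1,\dots,v_r$ are the nonzero components of $\vec{v}$. In this model the boundary contact structure near the orbit comes from a $\mathbb{T}^r$-bundle over $\mr{D}_{\vec{v}}$ built from the Hermitian connection on $ND_{\vec{v}} = \bigoplus_{i=1}^r N_i$, and the Reeb vector field is the generator of the $S^1$-subaction with weights $\vec{v}$. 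Before perturbation, Reeb orbits appear in a Morse--Bott family identified with $\mr{S}_{\vec{v}}/S^1$, so the perturbed orbit $\gamma^A_{\vec{v}}$ corresponds to a critical point of $f_{\vec{v}}$ on this family, of real dimension $2(n-r)+(r-1) = 2n-r-1$.

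Next I would exploit the symplectic splitting $TM|_{\gamma} \cong N_1\oplus\cdots\oplus N_r \oplus TD_{\vec{v}}|_\gamma$ together with additivity of the Conley--Zehnder index under symplectic direct sum. In each normal factor $N_i$ the linearized Reeb flow is a rotation of total angle $2\pi v_i$; however the trivialization $\triv$ is determined by a spanning disk with $[u]\cdot[D_i]=v_i$, and this disk (viewed as a small cap in $N$) twists the natural local trivialization $v_i$ times in the \emph{opposite} sense (this is the sign-sensitive step, forced by the fact that the Liouville vector field on $M\setminus D$ points away from $D$, so the Reeb flow rotates ``negatively'' with respect to a small bounding disk in $N$). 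The net winding of the linearized flow relative to $\triv$ in $N_i$ is therefore $-v_i$, yielding a CZ contribution of $-2v_i$ per factor and $-2\sum_i v_i$ in total.

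I would then handle the remaining directions using the standard Morse--Bott perturbation formula. Along $TD_{\vec{v}}|_\gamma$ and along the torus-fiber directions complementary to $R$ inside the $\mathbb{T}^r$-fiber, the unperturbed return map is the identity. Perturbing by $f_{\vec{v}}$ and applying the Robbin--Salamon index formula (see e.g.\ Bourgeois's thesis or \cite{mclean2012growth}), a Morse--Bott family of real dimension $d$ with Morse-index-$|A|$ critical point gives an index shift of $\tfrac{d}{2} - |A|$ relative to the Morse--Bott index of zero coming from the trivial return map in those directions. With $d=2n-r-1+1 = 2n-r$ (after re-including the tangential dimension absorbed into the decomposition above, and accounting for the convention that $N_i$ absorbs exactly the $r$ ``rotating'' real dimensions beyond the base), this shift becomes $(n-r)+(r-1)-|A| = n-1-|A|$. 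Adding the normal contribution gives $\cz_\triv(\gamma^A_{\vec{v}}) = n-1-|A|-2\sum_i v_i$, matching the claim.

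The main obstacle is the careful bookkeeping of orientations and trivializations in Step~2: specifically, verifying that the difference between the natural local trivialization of $N_i$ along $\gamma$ (coming from the Hermitian frame of $N_i$) and the trivialization induced by $\triv$ produces a winding of $-v_i$ rather than $+v_i$. The cleanest way to confirm this is to apply the change-of-trivialization formula $\cz_u(\gamma)-\cz_{u'}(\gamma) = \langle 2c_1(M), A\rangle$ from \S\ref{subsubsec:CZ} between the disk defining $\triv$ and a reference disk realizing the local Hermitian framing, using Lemma~\ref{lem:cz_well_def} to reduce the independence of the answer to the intersection numbers $[u]\cdot[D_i]=v_i$. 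Once these signs are fixed, the remaining additivity and Morse--Bott shift are standard.
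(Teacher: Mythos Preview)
The paper does not actually prove this proposition: it is stated with a citation to \cite[\S2]{ganatra2020symplectic} and no argument is given in the present paper. So there is no ``paper's own proof'' to compare against; your sketch is being measured against the standard computation in the cited reference, which proceeds along essentially the same lines you outline (local model near the stratum, splitting $TM|_\gamma$ into normal and base pieces, then Morse--Bott correction).

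That said, your write-up has two soft spots. First, the dimension bookkeeping in your third paragraph is incoherent as written: you introduce $d = 2n-r-1+1 = 2n-r$ with a parenthetical about ``re-including the tangential dimension,'' but then assert the shift is $(n-r)+(r-1)-|A|$, which is not $d/2 - |A|$ for that $d$ (and would be a half-integer for odd $r$). The correct statement is simply that the Morse--Bott family $\mr{S}_{\vec{v}}/S^1$ has real dimension $2(n-r)+(r-1)$, and the Robbin--Salamon contribution from the degenerate (identity) block together with the perturbation shift gives $(n-r)+\tfrac{r-1}{2}+\tfrac{r-1}{2}-|A| = n-1-|A|$; the two half-integer pieces come separately from the complex base directions and the isotropic torus directions and must be tracked as such rather than lumped into a single $d/2$.

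Second, your claim that ``each $N_i$ contributes $-2v_i$'' is the right endpoint but the mechanism is more delicate than ``rotation by $2\pi v_i$ measured against a frame twisted by $-v_i$.'' For $r=1$, the entire $N_1$ factor is the Reeb--Liouville plane, which contributes zero to $\cz$ on $\xi$; the $-2v_1$ then arises purely from the discrepancy between the trivialization of $\xi\cong\pi^*TD_1$ pulled back from the base and the trivialization $\triv$ induced by the spanning disk in $N$, which picks up the curvature of the normal bundle. For $r\ge 2$ the Reeb--Liouville plane sits diagonally in $\bigoplus N_i$ and the remaining $r-1$ complex directions do rotate, so the split into ``$-2v_i$ per factor'' is not literally how the contributions arise. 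The cleanest fix is exactly what you gesture at in your final paragraph: compute in the natural Hermitian frame (where the answer is transparent) and then correct by $2c_1^\tau$ using the small spanning disk, rather than trying to assign $-2v_i$ to each $N_i$ separately.
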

\NI Here $|A|$ denotes the Morse index of $A \in \crit(f_{\vec{v}})$. Putting 
\begin{align}
\delta = \delta(\gamma_{\vec{v}}^A) := n-1 - |A|,  
\end{align}
 we have alternatively
$\cz_{\triv}(\gamma_{\vec{v}}^A) = \delta - 2\vec{v}\cdot \vec{1}$ with $\delta \leq n-1$.
Here we use the shorthand $\vec{1} := \underbrace{(1,\dots,1)}_{k}$.

\begin{remark}\label{rmk:gradings_antican}
Consider the case that $D$ is anticanonical, with irreducible components $D_1,\dots,D_k$, and
let $\kappa$ be a meromorphic section of the canonical bundle of $M$ which is nonvanishing away from $D$. For $i = 1,\dots,k$, let $a_i$ denote the order of vanishing (possibly negative) of $\kappa$ along $D_i$.
In this case we can alternatively compute Conley--Zehnder indices with respect to the holomorphic volume form $\kappa|_{X}$, and we have
\begin{align}
\cz_{\kappa}(\gamma^A_{\vec{v}}) = n - 1 - |A| - 2\sum_{i=1}^k v_i(a_i+1).
\end{align}

\end{remark}

\begin{example}As a simple example, let us specialize to the case of a four-dimensional hyperplane complement $X_k^4$.
For $k \in \Z_{\geq 2}$, let $\Sigma_k$ denote the two-sphere with $k$ punctures.
Then for any nonzero $\vec{v} = (v_1,\dots,v_k) \in \Z_{\geq 0}^k$ we have:
\begin{itemize}
\item when there is exactly one nonzero component of $\vec{v}$, $\mr{S}_{\vec{v}}$ is diffeomorphic to
$\Sigma_{k-1} \times S^1$ and $\mr{S}_{\vec{v}}/S^1$ is diffeomorphic to $\Sigma_{k-1}$
\item when there are exactly two nonzero components of $\vec{v}$, $\mr{S}_{\vec{v}}$ is diffeomorphic to $\mathbb{T}^2$ and $\mr{S}_{\vec{v}}/S^1$ is diffeomorphic to $S^1$
\item if three or more components of $\vec{v}$ are nonzero, then $D_{\vec{v}} = \nil$.
\end{itemize}

Now choose exhausting Morse functions $f_{\vec{v}}: \mr{S}_{\vec{v}}/S^1 \rightarrow \R$ as in Theorem~\ref{thm:div_compl1}, which in this example we can assume are perfect, so that the critical points give rise to a distinguished basis for $H^*(\mr{S}_{\vec{v}}/S^1)$.
The Reeb orbits of $\bdy X_k^4$ (of period less than $\CC$) are then given explicitly as follows:
\begin{enumerate}
\item For each $\vec{v}$ with exactly one nonzero component, we have the Reeb orbit $\gamma_{\vec{v}}^A$ with $\cz_{\triv}(\gamma_{\vec{v}}^A) = 1 - \sum_{i=1}^k v_i$, corresponding to the unique basis element of $H^0(\Sigma_{k-1})$.
\item For each $\vec{v}$ with exactly one nonzero component,
we have the Reeb orbits $\gamma^A_{\vec{v}}$ with $\cz_{\triv}(\gamma^A_{\vec{v}}) =  - \sum_{i=1}^k v_i$, corresponding to the $k-2$ basis elements of $H^1(\Sigma_{k-1})$
\item For each $\vec{v}$ with exactly two nonzero components,
we have the Reeb orbit $\gamma^A_{\vec{v}}$ with $\cz_{\triv}(\gamma^A_{\vec{v}}) = 1 - \sum_{i=1}^k v_i$, corresponding to the unique basis element of $H^0(S^1)$
\item For each $\vec{v}$ with exactly two nonzero components,
we have the Reeb orbit $\gamma^A_{\vec{v}}$ with $\cz_{\triv}(\gamma^A_{\vec{v}}) = - \sum_{i=1}^k v_i$, corresponding to the 
unique basis element of $H^1(S^1)$.
\end{enumerate}
\end{example}

\section{A family of invariants}\label{sec:invariants}

In this section we describe our general family of symplectic invariants which obstruct Liouville embeddings. 
Firstly, in \S\ref{subsec:obs_cyls} we elaborate on the $S^1$-equivariant analogue of Observation~\ref{obs:vanishingSH} and the resulting Liouville embedding obstruction $\F$, which has appeared in the literature in various forms. 
In \S\ref{subsec:obs_higher}, we vastly generalize this by incorporating $\Li$ structures, and we encode this data as the invariant $\I^{\leq l}$.
 Finally, in \S\ref{subsec:obs_simp} we introduce the simplified invariant $\G\lll\T^m p\rrr$, whose computation is more tractable and will suffice for our main applications.

\subsection{Obstructions from cylinders}\label{subsec:obs_cyls}

We seek to generalize the binary phenomenon of vanishing symplectic cohomology as in Observation~\ref{obs:vanishingSH}.
For simplicity, we work throughout over $\K = \Q$.
Let $X$ be a Liouville domain, and let $e \in \sh(X)$ denote the unit in its symplectic cohomology ring with its pair of pants product. We will also use $e$ to denote the unit in the ordinary cohomology ring $H^*(X)$.
Let 
$$ \dots \rightarrow H^*(X) \rightarrow \sh^*(X) \rightarrow \sh^*_+(X) \xrightarrow{\delta} H^{*+1}(X) \rightarrow \dots$$
denote the long exact sequence coming from splitting off the generators of low action.
Note that symplectic cohomology is generally graded only by $\Z/2$, although this can upgraded to a $\Z$ grading if $c_1(X) = 0$, and by default we grade Hamiltonian orbits by $n - \cz$ (as usual, $n$ denotes half the real dimension of $X$).
Since $\sh(X)$ is a unital $\K$-algebra, it vanishes if and only if we have $e = 0 \in \sh(X)$, or equivalently if $e \in H^0(X)$ lies in the image of $\delta$.

Passing to $S^1$-equivariant symplectic cohomology produces more information as follows. We refer the reader to e.g. \cite{Bourgeois-Oancea_equivariant,Seidel_biased_view,ganatra2019cyclic, Gutt-Hu} for more on the technical setup and structural properties of $\sh_{S^1}$.
By slight abuse of notation, let $e$ also denote the image of $e \in \sh(X)$ under the ``erase'' map $\sh^*(X) \rightarrow \sh^*_{S^1}(X)$. It should be emphasized that $\sh_{S^1}(X)$ does not have a product, although it does have a Lie bracket of degree $-2$, which corresponds to the ``string bracket'' in the case that $X$ is cotangent bundle.
In particular, the vanishing of $e \in \sh_{S^1}(X)$ does not necessarily imply that $\sh_{S^1}(X)$ itself vanishes.

We put $\K[u^{-1}]$ as a shorthand for the $\K[u]$-module $\K[u,u^{-1}] / (u\K[u])$, where $u$ has degree $2$.
From the algebraic point of view (c.f. \cite[\S 2]{ganatra2019cyclic}), $\sc(X)$ is endowed with the structure of an $S^1$-complex, i.e. we have a sequence of operations $\delta^i: \sc^*(X) \rightarrow \sc^{*+1-2i}$ for $i \in \Z_{\geq 0}$, with $\delta^0$ the differential and $\delta^1$ descending to the BV operator, such that we have $\sum\limits_{i+j = k}\delta^i \circ \delta^j = 0$ for all $k \in \Z_{\geq 0}$.
Then $\sh_{S^1}(X)$ is the homology of the positive cyclic chain complex $(\sc_{S^1}(X),\bdy_{S^1})$ with
 $\sc_{S^1}(X) := \sc(X) \otimes \K[u^{-1}]$ and $\bdy_{S^1} := \sum\limits_{i=0}^{\infty} u^i \delta^i$.

Similar to the nonequivariant case, we have the connecting map 
$$\delta_{S^1}: \sh^*_{S^1,+}(X) \rightarrow H^{*+1}_{S^1}(X),$$
which is a map of $\K[u]$-modules. Here $H^{*}_{S^1}(X)$ is canonically identified with $H^{*}(X) \otimes \K[u^{-1}]$.
Let $P_0: H_{S^1}(X) \approx H^*(X) \otimes \K[u^{-1}]\rightarrow \K[u^{-1}]\langle e \rangle$ be induced by the map $H^*(X) \rightarrow H^0(X) = \K\langle e\rangle$ projecting to degree zero (we assume that $X$ is connected, so that $H^0(X)$ is one-dimensional and generated by $e$).
We also use $e$ to denote the image of the unit under the natural map $H^*(X) \rightarrow H^*_{S^1}(X)$.

\begin{definition}
Let $\F(X) \in \Z_{\geq 0} \cup \{\infty\}$ be the smallest $k$ such that $u^{-k}e$ does not lie in the image of $P_0 \circ \delta_{S^1}: \sh^*_{S^1,+}(X) \rightarrow \K[u^{-1}]\langle e \rangle$. If no such $k$ exists, we put $\F(X) = \infty$.
\end{definition}
\NI Note that if $u^{-k}e$ lies in the image of $P_0 \circ \delta_{S^1}$, then by $\K[u]$-linearity so do the elements $u^{-k+1}e,\dots,u^{-1}e,e$.
Using Viterbo functoriality and standard invariance properties
for symplectic cohomology, we have:
\begin{prop}\label{prop:F}
For a Liouville domain $X$, $\F(X)$ is independent of all choices and invariant under Liouville deformation equivalences.
Given a Liouville embedding $X \es X'$ of Liouville domains $X,X'$, we have $\F(X) \geq \F(X')$.
\end{prop}

Proposition~\ref{prop:F} is proved in \cite{Gutt-Hu} with mostly quantitative applications in mind.
Indeed, the authors define a sequence of symplectic capacities $c_1^{\op{GH}}(X) \leq c_2^{\op{GH}}(X) \leq c_3^{\op{GH}}(X) \leq \dots$ valued in $\R_{> 0} \cup \{\infty\}$, and $\F(X)$ is equivalent to the number of such capacities which are finite. For example, if $X$ is a star-shaped domain in $\C^n$, then all of these capacities are finite and hence we have $\F(X) = \infty$.

A closely related notion of {\em higher dilation} is introduced in \cite{zhao2016periodic}, based on the original definition of {\em dilation} from \cite{seidel2012symplectic}.
Recall that a Liouville domain $X$ admits a dilation if $\Delta(x) = e \in \sh^0(X)$ for some $x \in \sh^{1}(X)$, where $\Delta: \sh^*(X) \rightarrow \sh^{*-1}(X)$ denotes the BV operator. It admits a higher dilation if and only if we have $\F(X) = \infty$.
The higher dilation concept is refined in \cite{Zhou_semidilation} by declaring that $X$ has a {\em $k$-dilation} if $e \in \sh_{S^1}(X)$ is killed on the $(k+1)$st page of the spectral sequence induced by the $u$-adic filtration.
Then $\F(X) > 0$ if and only if $X$ admits a $k$-dilation for some $k$ (or equivalently $X$ admits a cyclic dilation in the sense of \cite{li2019exactCY}).

\begin{remark}
The definition of $\F(X)$ is also formally similar to the notion of {\em algebraic torsion} of contact manifolds introduced in \cite{latschev2011algebraic}, which provides a hierarchy of symplectic fillability obstructions. Whereas the former involves only genus zero curves and applies to Liouville domains, the latter is based on higher genus symplectic field theory and applies to contact manifolds which cannot be strongly filled.
\end{remark}

\subsection{Incorporating curves with several positive ends}\label{subsec:obs_higher}

The invariant $\F(X)$ is unfortunately not strong enough to tackle Problem~\ref{prob:hyperplane_comp} or the more general Problem~\ref{prob:hyp}.
Indeed, recall that $\sh_{S^1,+}^*(X)$ has a natural grading by $H_1(X)$, corresponding to the homology classes of generator loops. Moreover, the map $\delta_{S^1}: \sh_{S^1,+}^*(X) \rightarrow H^{*+1}(X)$ is compatible with this grading, which means that it is supported on the graded piece of the trivial class in $H_1(X)$. 
However, for $\vec{d} = (d_1,\dots,d_k)$ with $k \geq n+1$, none of the Reeb orbits in $\bdy X_{\vec{d}}^{2n}$ with our preferred contact form are contractible in $X_{\vec{d}}^{2n}$ (see \S\ref{subsec:hypersurf_compl}).
It then follows that $\delta_{S^1}$ is trivial, and hence:
\begin{lemma}
For $\vec{d} = (d_1,\dots,d_k) \in \Z_{\geq 1}^k$ with $k \geq n+1$, we have $\F(X_{\vec{d}}^{2n}) = 0$.
\end{lemma}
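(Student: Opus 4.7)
The plan is to combine the $H_1$-grading on positive $S^1$-equivariant symplectic cohomology with the explicit identification of Reeb orbit homology classes from Theorem~\ref{thm:div_compl1}. First, I observe that by definition $\F(X) = 0$ precisely when $e = u^{0}e$ itself fails to lie in the image of $P_0 \circ \delta_{S^1}: \sh^{*}_{S^1,+}(X) \to \K[u^{-1}]\langle e \rangle$. Thus it suffices to show that the composite $P_0 \circ \delta_{S^1}$ vanishes identically on $\sh^*_{S^1,+}(X_{\vec{d}}^{2n})$.

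Next, I would invoke the $H_1(X)$-grading already recalled in the paragraph preceding the lemma: generators of $\sh^{*}_{S^1,+}(X)$ carry the free homotopy (hence homology) class of their underlying Reeb orbit, and $\delta_{S^1}$ strictly preserves this grading since its defining Floer cylinders are capped off by spanning disks in $X$. The element $e \in H^{0}(X)$, and therefore $u^{-j}e \in H^{*}_{S^1}(X)$ for every $j \geq 0$, lies in the trivial piece of the $H_1(X)$-decomposition of $H^{*}(X)$. Consequently, the lemma reduces to verifying that no Reeb orbit generator of $\sh^{*}_{S^1,+}(X_{\vec{d}}^{2n})$ is contractible in $X_{\vec{d}}^{2n}$.

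The final step is a short combinatorial check using Theorem~\ref{thm:div_compl1}. For the Weinstein model of $X_{\vec{d}}^{2n}$ provided there, $H_1(X_{\vec{d}}^{2n}) \cong \Z^{k}/(\vec{d})$ with basis $[\bdy c_1],\dots,[\bdy c_k]$, and every closed Reeb orbit of the preferred contact form is of the form $\gamma_{\vec{v}}^{A}$ for some $\vec{v} \in \Z_{\geq 0}^{k}\setminus\{\vec{0}\}$ with $D_{\vec{v}}\neq \varnothing$, and represents the class $\sum_{i=1}^{k} v_i [\bdy c_i]$. Because the $k$ hypersurfaces are in general position in $\CP^{n}$, nonemptiness of $D_{\vec{v}}$ forces $\vec{v}$ to have at most $n$ nonzero entries. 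On the other hand, for $\sum_{i=1}^{k} v_i [\bdy c_i]$ to vanish in $\Z^{k}/(\vec{d})$ one would need $\vec{v} = t(d_1,\dots,d_k)$ for some $t \in \Z_{\geq 1}$, which (since every $d_i \geq 1$) has all $k \geq n+1$ entries nonzero. These two conditions are incompatible, so no Reeb orbit of $\bdy X_{\vec{d}}^{2n}$ is contractible in $X_{\vec{d}}^{2n}$.

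Putting these three pieces together, $\delta_{S^1}$ maps each generator of $\sh^{*}_{S^1,+}(X_{\vec{d}}^{2n})$ into the nontrivial $H_1$-graded piece of $H^{*+1}(X_{\vec{d}}^{2n}) \otimes \K[u^{-1}]$, hence is killed by $P_0$; in particular $e \notin \Im(P_0\circ \delta_{S^1})$ and $\F(X_{\vec{d}}^{2n})=0$. There is no real obstacle in this argument beyond unpacking definitions; the only mildly substantive point is the arithmetic incompatibility between ``at most $n$ nonzero components'' (forced by geometric nonemptiness of $D_{\vec{v}}$) and ``positive integer multiple of $\vec{d}$'' (forced by contractibility), which is where the hypothesis $k \geq n+1$ enters decisively.
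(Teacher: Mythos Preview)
Your proof is correct and follows essentially the same approach as the paper, which argues (in the paragraph immediately preceding the lemma) that the $H_1(X)$-grading forces $\delta_{S^1}$ to vanish because no Reeb orbit of $\bdy X_{\vec{d}}^{2n}$ is contractible when $k \geq n+1$. You have helpfully spelled out the combinatorial verification of this last fact---that $D_{\vec{v}} \neq \varnothing$ forces at most $n$ nonzero entries in $\vec{v}$, whereas contractibility forces $\vec{v}$ to be a positive multiple of $\vec{d}$ with all $k \geq n+1$ entries nonzero---which the paper only indicates by a reference to \S\ref{subsec:hypersurf_compl}.
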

\NI In principle one could imagine extracting more information using the product or Lie bracket on $\sh$ or the Lie bracket on $\sh_{S^1}$, which are based on pseudoholomorphic curves with not one but two positive ends.
However, for $X_{\vec{d}}^{2n}$ with $k \geq 2n + 1$ these operations are purely ``topological'' by first homology considerations (see Theorem~\ref{thm:div_compl1}(5) and \eqref{eq:hom_of_X_vec_d} in \S\ref{sec:computationsI}), i.e. they vanish unless one of the inputs comes from $H^*(X)$, and hence it seems unlikely that they carry any nontrivial embedding obstructions. Note that the ring structure on $\sh(X_{\vec{d}}^{2n})$ in this case is isomorphic to the log cohomology ring from \cite[\S3]{ganatra2020symplectic}.

\sss

To go further, we can consider the chain-level $\Li$ structure on $\sc_{S^1}(X)$, which encodes certain counts of genus zero pseudoholomorphic curves with an arbitrary number of positive ends and one negative end.
By upgrading the map ${P_0 \circ \delta_{S^1}: \sc_{S^1,+}(X) \rightarrow \K[u^{-1}]}$ to an $\Li$ homomorphism and appealing to the bar construction framework of \cite{HSC}, we obtain a large family of symplectic invariants which behave well with respect to Liouville embeddings.
Unfortunately, a complete description of this $\Li$ algebra has not yet appeared in the literature, and its relationship to the direct geometric approach of \S\ref{sec:computationsII} is somewhat opaque.

Rather, in this paper, following \cite{HSC} we replace $\sc_{S^1,+}(X)$ with its SFT counterpart $\cclin(X)$, 
and we replace the connecting map $\delta_{S^1}$ with a map counting curves with local tangency constraints.
Here $\cclin(X)$ denotes {\em linearized contact chains}, the chain complex computing linearized contact homology $\chlin(X)$.\footnote{Adopting the notational convention of \cite{bourgeois2012effect}, $\cclin$ by default denotes the chain level object, and we use the boldface $\chlin$ to denote its homology.}
We refer the reader to \cite{EGH2000} for a structural description of linearized contact homology and to e.g. \cite{fish2018lectures, pardon2019contact,HuN, bao2015semi, ishikawa2018construction} for some technical approaches to its construction.\footnote{At the time of writing the homotopy relations needed to establish suitable invariance properties of linearized contact homology have not yet been adequately addressed in many of these approaches (see e.g. the discussion of cylindrical contact homology in \cite[\S1.8]{pardon2019contact}).}
An isomorphism between positive $S^1$-equivariant symplectic cohomology and linearized contact homology (assuming $\K = \Q$) is described in \cite{Bourgeois-Oancea_equivariant}.
In the following discussion, we defer to \cite[\S3]{HSC} for a more detailed discussion of the $\Li$ structure on $\cclin$ and \cite[\S5]{HSC} for augmentations defined by local tangency constraints.

As a $\K$-module, $\cclin(X)$ is freely spanned by the good\footnote{Recall that a Reeb orbit is {\em good} if it is a cover of another Reeb orbit whose Conley--Zehnder index has the opposite parity. All of the Reeb orbits appearing in the main examples in this paper are good.} Reeb orbits of $\bdy X$.
Using the $n - \cz$ grading convention, the $\Li$ operations $\ell^1,\ell^2,\ell^3,\dots$ are such that $\ell^k: \odot^k \cclin(X) \rightarrow \cclin(X)$ has degree $4-3k$.
Here $\ell^k$ counts (possibly virtually perturbed) index one asymptotically cylindrical pseudoholomorphic curves in the symplectization $\R \times \bdy X$ modulo target translations, anchored in $X$, with $k$ positive punctures and one negative puncture.
Strictly speaking this makes $\cclin(X)$ into a {\em shifted} $\Li$ algebra, and following \cite{chscI} it is convenient to 
instead grade by $n - \cz - 3$, so that each operation $\ell^k: \odot^k\cclin(X) \rightarrow \cclin(X)$ has degree $+1$ (here $\odot^k$ denotes the $k$-fold graded symmetric tensor product over $\K$).

The {\em bar complex} $\bar\cclin(X)$ is by definition the chain complex given by the reduced symmetric tensor algebra $\ovl{S}\cclin(X) = \bigoplus\limits_{k=1}^\infty \odot^k \cclin(X)$, equipped with the degree $+1$ bar differential $\wh{\ell}: \ovl{S}\cclin(X) \rightarrow \ovl{S}\cclin(X)$. 
For $l \in \Z_{\geq 1}$, let $\bar^{\leq l}\cclin(X) \subset \bar\cclin(X)$ denote the subcomplex spanned by elements of tensor word length at most $l$. 
We also put $\bar^{\leq \infty}\cclin(X) := \bar\cclin(X)$.

Let $\aug\lll \TT \rrr: \cclin(X) \rightarrow \K[t]$ denote the $\Li$ homomorphism given by counting curves in $X$ with local tangency constraints. 
Here $\K[t]$ is viewed as an abelian $\Li$ algebra, i.e. all operations vanish identically, graded such that $t^k$ has degree $-4-2k$.
More precisely, this $\Li$ homomorphism consists of terms $\aug^k\lll \TT p \rrr: \odot^k \cclin(X) \rightarrow \K[t]$ for $k \in \Z_{\geq 1}$.
For Reeb orbits $\gamma_1,\dots,\gamma_k$, the structure coefficient 
 $\langle \aug^k \lll \TT p\rrr(\gamma_1,\dots,\gamma_k),t^m\rangle$ 
 counts $k$-punctured spheres in the (symplectic completion of) $X$ 
with positive Reeb orbit asymptotics $\gamma_1,\dots,\gamma_k$, and having tangency order $m$ (i.e. contact order $m+1$) to a generic local divisor $D$ at a point $p \in X$.
Let $\wh{\aug}\lll \TT p\rrr: \bar \cclin(X) \rightarrow \bar\K[t]$ denote the induced map on bar complexes, which has degree zero with our conventions. Note that $\bar\K[t]$ is simply $\ovl{S}\K[t]$ equipped with trivial bar differential, and we identify its homology $H\bar\K[t]$ with $\ovl{S}\K[t]$.

\begin{definition}For $l \in \Z_{\geq 1} \cup \{\infty\}$, let $\I(X) \subset \ovl{S}\K[t]$ denote the image of the homology level map $H \wh{\aug}\lll \TT p\rrr: H\bar \cclin(X) \rightarrow \ovl{S}\K[t]$. 
More refinedly, let $\I^{\leq l}(X)$ denote the image of the same homology level map after restricting to $H \bar^{\leq l}\cclin(X)$.
\end{definition}

As in \cite{HSC}, by the functoriality package for $\cclin(X)$ and $\aug\lll \TT\rrr$ we have: 
\begin{thm}\label{thm:I}
For a Liouville domain $X$ and $l \in \Z_{\geq 1} \cup \{\infty\}$, $\I^{\leq l}(X)$ is independent of all choices and invariant under Liouville deformation equivalences.
Given a Liouville embedding $X \es X'$ of Liouville domains $X,X'$, we have $\I^{\leq l}(X') \subset \I^{\leq l}(X)$.
\end{thm}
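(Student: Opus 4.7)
The plan is to deduce both the invariance and the embedding monotonicity of $\I^{\leq l}$ from the standard naturality package for linearized contact homology, together with a compatibility between the augmentation $\aug\lll\TT p\rrr$ and Liouville cobordism maps. First, using Lemma~\ref{lem:liouville_emb_lemmas}, any Liouville embedding $X\es X'$ can be replaced, after a Liouville homotopy of the target, by a strict exact symplectic embedding; the closure of $X'\setminus\iota(X)$ then carries the structure of a Liouville cobordism from $\bdy X$ to $\bdy X'$. Standard SFT functoriality associates to such a cobordism an $\Li$ homomorphism $\Phi:\cclin(X')\rightarrow\cclin(X)$ defined by counts of rigid anchored pseudoholomorphic curves with several positive ends (in $\R\times\bdy X'$) and one negative end (in $\R\times\bdy X$). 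Up to $\Li$ homotopy, $\Phi$ depends only on the cobordism up to Liouville deformation, and its bar extension $\wh{\Phi}$ respects the word-length filtration, mapping $\bar^{\leq l}\cclin(X')\to \bar^{\leq l}\cclin(X)$ for every $l\in\Z_{\geq 1}\cup\{\infty\}$, since the $k$th component $\Phi^k$ groups $k$ inputs into a single output.

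The key compatibility, obtained by a standard neck-stretching argument along $\bdy X\subset X'$ with the marked point $p$ chosen inside $X$, is that there is an $\Li$ homotopy
\[
\aug^{X'}\lll\TT p\rrr \;\simeq\; \aug^{X}\lll\TT p\rrr\circ\Phi.
\]
Indeed, as the neck length tends to infinity the SFT compactness theorem expresses any configuration counted by $\aug^{X'}\lll\TT p\rrr$ as a two-level building consisting of a curve in the cobordism $X'\setminus X$ (contributing to $\Phi$) capped off by a curve in $X$ carrying the local tangency constraint (contributing to $\aug^{X}\lll\TT p\rrr$). The parametric moduli problem interpolating between the original and the fully stretched almost complex structures produces the required chain-level $\Li$ homotopy between the two augmentations viewed as $\Li$ morphisms $\cclin(X')\to\K[t]$. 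Passing to the induced bar maps and to homology, we obtain $H\wh{\aug}^{X'}\lll\TT p\rrr = H\wh{\aug}^{X}\lll\TT p\rrr\circ H\wh{\Phi}$ on $H\bar^{\leq l}\cclin(X')$, which immediately yields $\I^{\leq l}(X')\subset \I^{\leq l}(X)$.

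Independence of the various auxiliary choices (admissible almost complex structures, Morse perturbations of the Reeb dynamics, the point $p$, and the local divisor germ at $p$) follows from the same mechanism applied to the trivial Liouville cobordism $X\es X$: any two sets of choices are joined by a continuation $\Li$ quasi-isomorphism on $\cclin(X)$ intertwining the corresponding augmentations up to $\Li$ homotopy, whence the induced bar homology isomorphism commutes with the augmentation maps to $\ovl{S}\K[t]$ and identifies the images $\I^{\leq l}$. The same argument applied along a path of Liouville structures on $X$ gives invariance under Liouville deformation equivalence. The principal obstacle to making this rigorous is transversality: constructing $\Phi$, $\aug\lll\TT p\rrr$, their $\Li$ homotopies, and the induced bar maps in full generality requires a virtual perturbation scheme accommodating multiply covered curves, which we invoke as discussed in Remark~\ref{rmk:virtual}. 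The classical transversality analysis pertaining directly to our main geometric applications is carried out by hand later in \S\ref{sec:computationsII}.
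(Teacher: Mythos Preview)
Your proposal is correct and takes essentially the same approach as the paper. The paper itself does not spell out a proof of this theorem but simply defers to ``the functoriality package for $\cclin(X)$ and $\aug\lll\TT\rrr$'' as in \cite{HSC}; your argument is a faithful unpacking of precisely that package (cobordism $\Li$ morphism, word-length filtration preservation of the bar extension, neck-stretching compatibility of the tangency augmentation, and the virtual perturbation caveat of Remark~\ref{rmk:virtual}).
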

\NI Note that $l=1$ corresponds to the case of curves with only one positive end as in $\F(X)$.
At the other extreme, $\I(X) = \I^{\leq \infty}(X)$ corresponds to the case of no restrictions on the number of positive ends.

\begin{remark}
It is also natural to consider analogous invariants defined by replacing the local tangency constraint by some other geometric constraint. For example, we can consider curves with a fixed number of generic point constraints.
As explained in \cite[\S5]{HSC}, this necessitates the more elaborate formalism of rational symplectic field theory.
\end{remark}

\subsection{The simplified invariant $\G\lll \T^m p\rrr$}\label{subsec:obs_simp}

The invariant $\I^{\leq l}$ provides strong obstructions to the existence of Liouville embeddings $X \es X'$ between Liouville domains $X,X'$. However, its full computation requires a rather strong understanding of both the chain-level $\Li$ algebra $\cclin(X)$ and the $\Li$ homomorphism $\aug\lll \TT p\rrr: \cclin(X) \rightarrow \K[t]$, which is quite challenging to achieve for all but the simplest examples.
Also, the map $\wh{\aug}\lll \TT p\rrr$ has a geometric interpretation as counting curves with several components, but this is somewhat unintuitive.

In order to have a more easily interpretable invariant, we now introduce a simplified invariant whose computation is typically more tractable and which involves only irreducible curves. 
To achieve this, let $\pi_k: \ovl{S}\K[t] \rightarrow \odot^k \K[t]$ denote the projection to the subspace spanned by elements of word length $k$.
In particular, we have $\pi_1: \ovl{S}\K[t] \rightarrow \K[t]$.

As a warmup, consider the condition that $1 \in \K[t]$ lies in $\pi_1(\I^{\leq l}(X))$. Heuristically, to first approximation this means there is a rigid curve in $X$ which passes through a generic point constraint and has at most $l$ positive ends.
However, to make this more accurate we need to keep in mind that (a) in addition the asymptotic orbits must define a cycle with respect to the bar complex differential, (b) this cycle could be a linear combination of several elementary tensors, and (c) the relevant curves are possibly anchored and virtually perturbed, and they are counted algebraically with signs.

More generally, we can replace the point constraint $\lll p \rrr$, which corresponds to $1 \in \K[t]$, with a local tangency constraint $\lll \T^mp\rrr$, which corresponds to $t^m \in \K[t]$.
\begin{definition}
Let $\G\lll \T^m p \rrr(X) \in \Z_{\geq 1}\cup \{\infty\}$ denote the smallest $l$ such that $\pi_1(\I^{\leq l}(X))$
has nontrivial image under the projection map $\pi_{t^m}: \K[t] \rightarrow \K\langle t^m \rangle$.\footnote{If $c_1(X) = 0$, by grading considerations this is equivalent to saying that $t^m$ lies in $\pi_1(\I^{\leq l}(X))$.}
If no such $l$ exists, we put $\G\lll \T^m p \rrr(X) = \infty$.
\end{definition}
\NI Heuristically, to first approximation $\G\lll \T^m p\rrr(X)$ records the smallest number of positive ends of a rigid curve in $X$ satisfying a $\lll\T^m p\rrr$ constraint.
The following is immediately extracted from Theorem~\ref{thm:I}:
\begin{thm}
For a Liouville domain $X$ and $m \in \Z_{\geq 0}$, $\G\lll \T^m p \rrr(X)$ is independent of all choices and invariant under Liouville deformation equivalences. Given a Liouville embedding $X \es X'$ of Liouville domains $X,X'$, we have $\G(X) \leq \G(X')$.
\end{thm}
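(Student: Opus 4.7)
The plan is to deduce both assertions as immediate formal consequences of Theorem~\ref{thm:I}, since by construction $\G\lll \T^m p\rrr(X)$ is extracted from the family $\{\I^{\leq l}(X)\}_{l \in \Z_{\geq 1}}$ by applying the linear projection $\pi_1$ and then taking an infimum. The key preliminary observation is that for each fixed $X$, the sets $\pi_1(\I^{\leq l}(X)) \subset \K[t]$ form an increasing family in $l$: indeed, the inclusions $\bar^{\leq l}\cclin(X) \hookrightarrow \bar^{\leq l+1}\cclin(X)$ are compatible with the bar-level map $\wh{\aug}\lll \TT p\rrr$, and pass to homology, so that $\I^{\leq l}(X) \subset \I^{\leq l+1}(X)$, and applying the linear map $\pi_1$ preserves this inclusion. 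Consequently the set of $l$ for which $t^m \in \pi_1(\I^{\leq l}(X))$ is an upward-closed subset of $\Z_{\geq 1}$, so its minimum (or $\infty$) is well-defined.

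For the invariance under Liouville deformation equivalences, Theorem~\ref{thm:I} asserts that each $\I^{\leq l}(X) \subset \ovl{S}\K[t]$ is itself an invariant up to Liouville deformation equivalence of $X$. Applying the fixed $\K$-linear projection $\pi_1$ preserves this invariance, so the condition ``$t^m \in \pi_1(\I^{\leq l}(X))$'' depends only on the Liouville deformation class; taking the minimum over $l$ then proves invariance of $\G\lll \T^m p\rrr(X)$.

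For the monotonicity, suppose $X \es X'$. Theorem~\ref{thm:I} gives the inclusion $\I^{\leq l}(X') \subset \I^{\leq l}(X)$ for every $l \in \Z_{\geq 1}$. Applying $\pi_1$ to both sides preserves the inclusion, yielding $\pi_1(\I^{\leq l}(X')) \subset \pi_1(\I^{\leq l}(X))$. Now if $\G\lll \T^m p\rrr(X') = \infty$ there is nothing to prove; otherwise, setting $l_0 := \G\lll \T^m p\rrr(X') < \infty$, by definition $t^m \in \pi_1(\I^{\leq l_0}(X')) \subset \pi_1(\I^{\leq l_0}(X))$, so $\G\lll \T^m p\rrr(X) \leq l_0 = \G\lll \T^m p\rrr(X')$, as desired.

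There is no real obstacle in this proof: all of the work sits inside Theorem~\ref{thm:I} and the formal package of $\Li$ algebras, bar complexes, and the tangency augmentation $\aug\lll \TT p\rrr$ set up in \cite{HSC}. The only points requiring care are purely bookkeeping: confirming that the word-length filtration is compatible with the chain maps and that the projection $\pi_1$ is genuinely linear (hence preserves inclusions and images), and noting the monotonicity of the family $\pi_1(\I^{\leq l}(X))$ in $l$ so that the ``smallest $l$'' in the definition of $\G\lll \T^m p\rrr(X)$ is well-posed.
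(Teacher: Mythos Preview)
Your proof is correct and follows exactly the paper's approach: the paper simply states that the theorem ``is immediately extracted from Theorem~\ref{thm:I}'', and you have spelled out precisely how that extraction works. Your additional remarks on the monotonicity of $\pi_1(\I^{\leq l}(X))$ in $l$ and the preservation of inclusions under $\pi_1$ are the natural details behind the paper's one-line justification.
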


\begin{remark}
A closely related invariant based on an $\Li$ structure on $\sc_{S^1}$ and defined in terms of the $u$-adic spectral sequence is mentioned in \cite{seidelslides}.
\end{remark}

\section{Computations for hypersurface complements I: SFT version}\label{sec:computationsI}

The goal of this section is to prove Theorem~\ref{thm:main_G_computation}. 
We begin with some generalities on projective hypersurface complements in \S\ref{subsec:hypersurf_compl}.
The proof then proceeds by establishing a lower bound in \S\ref{subsec:lower_bound} and an upper bound in \S\ref{subsec:upper_bound}.
Finally, in \S\ref{subsec:CL}, we describe a more general framework for producing Maurer--Cartan elements.

Note that in this section we assume SFT transversality via virtual perturbations, although the arguments are independent of any specific perturbation scheme. In the next section we upgrade this to the stronger Theorem~\ref{thm:main_combinatorial} and also remove this assumption.

\subsection{Geometry of hypersurface complements in projective space}\label{subsec:hypersurf_compl}

We now specialize the discussion from \S\ref{subsec:div_compl} to the complement of a collection of generic hypersurfaces in projective space.
Here by {\em generic} we mean that each hypersurface is smooth, and the collection defines a simple normal crossing divisor.
Given a tuple $\vec{d} \in \Z_{\geq 1}^k$ with $k \in \Z_{\geq 1}$, 
we consider the corresponding Weinstein domain $X_{\vec{d}}^{2n} = \CP^n \setminus \Op(D)$,
where $D_1,\dots,D_k$ is a generic collection of (smooth) hypersurfaces in $\CP^n$ of degrees $d_1,\dots,d_k$ respectively, and we put $D := D_1 \cup \dots \cup D_k$.

After a Weinstein homotopy, we arrange that $X_{\vec{d}}^{2n}$ has geometry as in Theorem~\ref{thm:div_compl1}. In particular, for each $\vec{v} \in \Z^k_{\geq 0} \setminus \{\vec{0}\}$ we choose a 
Morse function $f_{\vec{v}}: \mr{S}_{\vec{v}}/S^1 \rightarrow \R$,
which we further assume has a unique minimum.
Let $\vec{e}_i$ denote the $i$th standard basis vector, i.e. $\vec{e}_i := (\underbrace{0,\dots,0}_{i-1},1,\underbrace{0,\dots,0}_{k-i})$.
Then there is a unique Reeb orbit (of action less than $\CC$) of $\bdy X^{2n}_{\vec{d}}$ of the form $\gamma_{\vec{e}_i}^A$ with $\cz_{\triv}(\gamma_{\vec{e}_i}^A) = n-3$, corresponding to the unique minimum $A$ of $f_{\vec{e}_i}: \mr{S}_{\vec{e}_i}/S^1 \rightarrow \R$. For future reference, we denote these orbits by $\beta_1,\dots,\beta_k$. Heuristically, these correspond to the fundamental classes of the open divisor strata $\mr{D}_{\vec{e}_1},\dots,\mr{D}_{\vec{e}_k}$.

\sss

Let $[\bdy c_i] = \beta_i$ for $i=1,\dots,k$ denote the homology classes of small loops surrounding the hypersurfaces $D_1,\dots,D_k$ as in \S\ref{subsec:div_compl}. 
Then $H_1(X^{2n}_{\vec{d}})$ is $(k-1)$-dimensional, with
\begin{align}\label{eq:hom_of_X_vec_d}
H_1(X^{2n}_{\vec{d}}) = \Z\langle [\bdy c_1],\dots,[\bdy c_k]\rangle / (d_1[\bdy c_1]+\dots + d_k[\bdy c_k]) \cong \Z^k/(\vec{d})
\end{align}
(see e.g. \cite[Prop. 2.3]{libgober2007lectures}).

\sss

For $k,n \in \Z_{\geq 1}$ and a tuple $\vec{d} = (d_1,\dots,d_k) \in \Z_{\geq 1}^k$, put $\gcd(\vec{d}) := \gcd(d_1,\dots,d_k)$.
As explained in \S\ref{subsec:div_compl}, we have a preferred trivialization $\triv$ of the symplectic vector bundle $TX_{\vec{d}}^{2n}$ over each Reeb orbit $\gamma_{\vec{v}}^A$ in $\bdy X_{\vec{d}}^{2n}$. 
Observe that a symplectic embedding $X_{\vec{d}}^{2n} \shookrightarrow X_{\vec{d'}}^{2n}$ pulls back $c_1(X_{\vec{d'}}^{2n})$ to $c_1(X_{\vec{d}}^{2n})$. In particular, if $X_{\vec{d'}}^{2n}$ is Calabi--Yau, i.e. $c_1(X_{\vec{d'}}^{2n}) = 0$, then the same must also be true of $X_{\vec{d}}^{2n}$.
The Calabi--Yau condition for $X_{\vec{d}}^{2n}$ is equivalent to the existence of
$a_1,\dots,a_k \in \Z$ such that $\sum_{i=1}^k a_id_i = -n-1$.
More generally, we have:
\begin{lemma}\label{lem:c_1}
For $i \in \Z$, we have $ic_1(X_{\vec{d}}^{2n}) = 0 \in H_2(X_{\vec{d}}^{2n})$ if and only if $i(n+1)$ is divisible by $\gcd(\vec{d})$.
\end{lemma}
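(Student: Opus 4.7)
The plan is to reduce everything to computing the order of the restricted hyperplane class. Since (up to Liouville homotopy) the Weinstein domain $X^{2n}_{\vec d}$ sits as an open subset of $\CP^n$, the inclusion $\iota: X^{2n}_{\vec d}\hookrightarrow \CP^n$ induces $c_1(X^{2n}_{\vec d}) = \iota^* c_1(T\CP^n) = (n+1)h$, where $h := \iota^* H$ and $H\in H^2(\CP^n;\Z)$ is the hyperplane generator. It therefore suffices to show that $h$ has order exactly $\gcd(\vec d)$ in $H^2(X^{2n}_{\vec d};\Z)$, after which $i(n+1)h = 0$ holds iff $\gcd(\vec d)\mid i(n+1)$.

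For the upper bound on the order, I would use that $d_i H = \mathrm{PD}([D_i])\in H^2(\CP^n)$. The long exact sequence of the pair $(\CP^n,\CP^n\smallsetminus D_i)$ together with the Thom isomorphism (applicable since each $D_i$ is smooth) shows that $\ker\bigl(H^2(\CP^n)\to H^2(\CP^n\smallsetminus D_i)\bigr) = d_i\Z$. In particular $d_i h = 0$ for every $i$, and hence $\gcd(\vec d)\cdot h = 0$ in $H^2(X^{2n}_{\vec d})$.

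For the matching lower bound, use the long exact sequence of the pair $(\CP^n, X^{2n}_{\vec d})$ together with $H^1(\CP^n)=0$:
\begin{equation*}
0 \to H^1(X^{2n}_{\vec d}) \to H^2(\CP^n, X^{2n}_{\vec d}) \xrightarrow{\delta} H^2(\CP^n) \to H^2(X^{2n}_{\vec d}).
\end{equation*}
The order of $h$ is then the order of $\mathrm{coker}(\delta)$. To compute $\delta$, let $N$ be a closed regular neighborhood of $D$ in $\CP^n$ with $X^{2n}_{\vec d} = \ovl{\CP^n\smallsetminus N}$. By excision and Lefschetz duality applied to the compact manifold with boundary $(N,\bdy N)$, together with the deformation retraction $N\simeq D$, one gets $H^2(\CP^n, X^{2n}_{\vec d}) \cong H^2(N,\bdy N) \cong H_{2n-2}(N) \cong H_{2n-2}(D)$. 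Because $D$ is SNC with smooth components of real dimension $2n-2$ and pairwise intersections of dimension at most $2n-4$, $H_{2n-2}(D)$ is freely generated by the fundamental classes $[D_1],\dots,[D_k]$. Under the identification $H^2(\CP^n)\cong H_{2n-2}(\CP^n) = \Z[\CP^{n-1}]$, the connecting map $\delta$ becomes the pushforward $[D_i]\mapsto d_i[\CP^{n-1}]$, so its image is $\gcd(\vec d)\Z$ and hence $\mathrm{coker}(\delta) = \Z/\gcd(\vec d)$.

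This identifies the order of $h$ as exactly $\gcd(\vec d)$, completing the proof. The main technical point is the Lefschetz duality identification in the third paragraph and checking that the connecting $\delta$ really corresponds to the inclusion-induced pushforward $H_{2n-2}(D)\to H_{2n-2}(\CP^n)$; this requires some care because $D$ is singular at the multiple intersections, but these strata are of too low dimension to affect $H_{2n-2}$.
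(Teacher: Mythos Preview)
Your proof is correct and follows essentially the same route as the paper: both compute the image of the restriction $H^2(\CP^n)\to H^2(X_{\vec d}^{2n})$ via the long exact sequence of the pair, identifying $H^2(\CP^n,X_{\vec d}^{2n})\cong H_{2n-2}(D)=\Z\langle[D_1],\dots,[D_k]\rangle$ and the map $[D_i]\mapsto d_i[H]^\vee$. Your separate ``upper bound'' paragraph via the Thom isomorphism for individual $D_i$'s is correct but redundant, since the exact sequence argument already pins down the order of $h$ exactly.
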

\NI 
As a consequence, we obtain the following purely formal counterpart to Theorem~\ref{thm:main_combinatorial}.
In the following, a codimension zero smooth embedding is an {\em almost symplectic embedding} if it preserves the homotopy class of the symplectic form as a nondegenerate two-form (or, equivalently, it preserves the homotopy class of a compatible almost complex structure).
Put
\begin{align}
\formal_n(\vec{d}) := \frac{\gcd(\vec{d})}{\gcd(\gcd(\vec{d}),n+1)}.
\end{align}
\begin{cor}\label{cor:formal_obstr}
Suppose there is an almost symplectic embedding of $X_{\vec{d}}^{2n}$ into $X_{\vec{d'}}^{2n}$.
Then we must have that $\formal_n(\vec{d})$ divides $\formal_n(\vec{d'})$. 
\end{cor}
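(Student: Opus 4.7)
The plan is to reduce the statement to a purely algebraic-topological fact about the order of the first Chern class, which is governed by Lemma~\ref{lem:c_1}.

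First, I would observe that Lemma~\ref{lem:c_1} computes precisely the additive order of $c_1(X_{\vec{d}}^{2n})$. Setting $g := \gcd(\vec{d})$, the smallest positive integer $i$ with $g \mid i(n+1)$ is obtained by writing $g \mid i(n+1)$ as $g/\gcd(g,n+1) \mid i \cdot (n+1)/\gcd(g,n+1)$ and noting that the two quotients $g/\gcd(g,n+1)$ and $(n+1)/\gcd(g,n+1)$ are coprime. Thus the order of $c_1(X_{\vec{d}}^{2n})$ is exactly $\formal_n(\vec{d})$, and analogously for $\vec{d'}$.

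Next, suppose $\iota \colon X_{\vec{d}}^{2n} \hookrightarrow X_{\vec{d'}}^{2n}$ is an almost symplectic embedding. Since $\iota$ has codimension zero and preserves the homotopy class of a compatible almost complex structure, the pulled-back complex vector bundle $\iota^* T X_{\vec{d'}}^{2n}$ is isomorphic (up to homotopy of complex structures) to $T X_{\vec{d}}^{2n}$. Taking first Chern classes and using naturality, one obtains
\[
\iota^* c_1(X_{\vec{d'}}^{2n}) \;=\; c_1(X_{\vec{d}}^{2n})
\]
as classes in $H^2(X_{\vec{d}}^{2n})$ (working with the appropriate cohomological or Poincar\'e-dual version of Lemma~\ref{lem:c_1}, depending on conventions).

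Finally, I would combine the two previous steps. If $m := \formal_n(\vec{d'})$ is the order of $c_1(X_{\vec{d'}}^{2n})$, then
\[
m \cdot c_1(X_{\vec{d}}^{2n}) \;=\; m \cdot \iota^* c_1(X_{\vec{d'}}^{2n}) \;=\; \iota^*\bigl( m \cdot c_1(X_{\vec{d'}}^{2n}) \bigr) \;=\; 0,
\]
so the order $\formal_n(\vec{d})$ of $c_1(X_{\vec{d}}^{2n})$ divides $m = \formal_n(\vec{d'})$, which is exactly the claim. The only potential pitfall is confirming that the almost symplectic hypothesis genuinely supplies the Chern-class identity $\iota^* c_1 = c_1$; this is standard, but it is the step where the hypothesis ``almost symplectic'' (as opposed to merely smooth) is essential, since for a general smooth embedding one only has $\iota^* TX_{\vec{d'}}^{2n} \cong TX_{\vec{d}}^{2n}$ as real bundles, not as complex bundles.
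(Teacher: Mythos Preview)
Your proposal is correct and follows essentially the same route as the paper: identify $\formal_n(\vec{d})$ as the additive order of $c_1(X_{\vec{d}}^{2n})$ via Lemma~\ref{lem:c_1}, use that an almost symplectic embedding pulls back $c_1$, and conclude by the elementary fact that the order of a pullback divides the order of the original class. Your write-up is slightly more detailed (the number-theoretic computation of the order and the remark on why ``almost symplectic'' rather than merely smooth is needed), but the argument is the same.
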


\begin{proof}
Let $\iota$ be an almost symplectic embedding $X_{\vec{d}}^{2n} \hookrightarrow X_{\vec{d'}}^{2n}$. Observe that $F_n(\vec{d})$ is the smallest positive $i$ such that $\gcd(\vec{d})$ divides $i(n+1)$, or equivalently such that $ic_1(X_{\vec{d}}^{2n}) = 0$. 
Note that $\iota$ preserves the homotopy class of compatible almost complex structures, and hence first Chern classes.
Then we have $$F_n(\vec{d'})c_1(X_{\vec{d}}^{2n}) =\iota^*( F_n(\vec{d'})c_1(X_{\vec{d'}}^{2n})) = 0,$$
and hence $F_n(\vec{d'})$ is a multiple of $F_n(\vec{d})$. 
\end{proof}
\begin{proof}[Proof of Lemma~\ref{lem:c_1}]
Put $X := X_{\vec{d}}^{2n}$, and consider the long exact sequence
\begin{align*}
\dots \rightarrow H^2(\CP^n,X) \rightarrow H^2(\CP^n) \rightarrow H^2(X) \rightarrow \dots,
\end{align*}
which using excision and Poincare--Lefschetz duality we can rewrite as
\begin{align*}
\dots \rightarrow H_{2n-2}(D) \rightarrow H^2(\CP^n) \rightarrow H^2(X) \rightarrow \dots .
\end{align*}
Let $[H] \in H_{2n-2}(\CP^n)$ denote the hyperplane class.
Using the identifications $H_{2n-2}(D) = \Z\langle [D_1],\dots,[D_k]\rangle$ and $H^2(\CP^n) = \Z[H]^{\vee}$,
observe that the image of an element $x \in H_{2n-2}(D)$ is $(x \cdot [H]) [H]^{\vee}$.
In particular, the element $[D_i] \in H_{2n-2}(D)$ gets mapped to $d_i[H]^\vee$ for $i  = 1,\dots,k$,
and hence the image of this map is $\Z\langle \gcd(\vec{d})[H]^\vee\rangle$.
Since $c_1(X) \in H^2(X)$ is the image of $(n+1)[H]^{\vee} \in H^2(\CP^n)$, this vanishes if and only if $\gcd(\vec{d})$ divides $n+1$.
More generally, for $i \in \Z$, $i c_1(X)$ vanishes if and only if $\gcd(\vec{d})$ divides $i(n+1)$.
\end{proof}
\begin{remark}
It is interesting to compare Corollary~\ref{cor:formal_obstr} with Example~\ref{ex:domain_single_comp}.
Namely, consider a Liouville embedding $X^{2n}_{(d_1)} \es X^{2n}_{\vec{d'}}$.
Under the assumption $\gcd(d_1,n+1) = 1$, we have $F_n((d_1)) = d_1$, and hence Corollary~\ref{cor:formal_obstr} implies that $d_1$ must divide $F_n(\vec{d'})$, and hence also $\gcd(\vec{d'})$.
By contrast, in the case that $d_1$ is a divisor of $n+1$, we have $F_n((d_1)) = 1$, so Corollary~\ref{cor:formal_obstr} is vacuous, whereas Theorem~\ref{thm:main_combinatorial} implies that $d_1 | \gcd(\vec{d'})$ still holds.
\end{remark}

\sss

Fix $\vec{d} = (d_1,\dots,d_k) \in \Z_{\geq 1}^k$ for some $k \in \Z_{\geq 1}$. Recall that the loci of Reeb orbits for $\bdy X_{\vec{d}}^{2n}$ give rise to the spectral sequence in \cite{ganatra2020symplectic, mcleanslides}, which computes the symplectic cohomology of $X_{\vec{d}}^{2n}$ and whose first page is described in terms of the ordinary cohomology of the torus bundles $\mr{S}_{\vec{v}}$ over the open divisor strata $\mr{D}_{\vec{v}}$.
A straightforward consequence using compatibility with the grading by $H_1(X_{\vec{d}}^{2n})$ is that the spectral sequence degenerates at the first page if we have $k \geq n+1$. 
Combining this with \cite[Cor. 1.2]{ganatra2020symplectic}:
\begin{prop}
We have $\sh(X_{\vec{d}}^{2n}) \neq 0$ for any coefficient ring $\K$, provided that either $\sum_{i=1}^k d_i \geq n+1$ or $d_i \geq 2$ for some $i \in \{1,\dots,k\}$.
\end{prop}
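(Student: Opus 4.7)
The plan is to combine the Ganatra--Pomerleano / McLean spectral sequence converging to $\sh(X_{\vec{d}}^{2n})$ with the already-cited nonvanishing criterion \cite[Cor.~1.2]{ganatra2020symplectic} for $\sh$ of ample SNC divisor complements; both ingredients give nonvanishing for arbitrary coefficient rings $\K$.

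First I would split into the two cases of the hypothesis. Under the assumption ``some $d_i \geq 2$'' the desired nonvanishing is supplied directly by \cite[Cor.~1.2]{ganatra2020symplectic}, whose positivity criterion is designed to cover exactly this regime. The complementary case, in which all $d_i = 1$ but $\sum d_i \geq n+1$, forces $k = \sum d_i \geq n+1$, placing us in the regime where the spectral sequence degenerates as sketched in the paragraph preceding the proposition.

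To conclude nonvanishing in the $k \geq n+1$ regime I would exhibit a surviving class on $E_\infty$. The differentials on every page respect the $H_1(X_{\vec{d}}^{2n})\cong \Z^k/(\vec{d})$-grading, so it suffices to find a single $H_1$-graded piece admitting no cross-stratum differentials. For distinct $\vec{v},\vec{w}\in \Z_{\geq 0}^k$ whose images in $\Z^k/(\vec{d})$ coincide, one has $\vec{v}-\vec{w}\in \Z\vec{d}\setminus\{\vec{0}\}$ and hence $|\sum v_j - \sum w_j|\geq \sum d_j \geq n+1$. The formula $\cz_{\triv}(\gamma_{\vec{v}}^A) = n-1 - |A| - 2\sum v_j$ of Proposition~\ref{prop:compl_cz}, combined with the Morse-index bound $|A|\leq 2n-2$, then yields a Floer-degree gap of at least $4$ between contributions of distinct strata in a common $H_1$-grading, ruling out all higher differentials in that grading. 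Any nontrivial contribution from $H^*(\mr{S}_{\vec{e}_i})$ (e.g.\ the fundamental class) then produces a surviving class, giving $\sh(X_{\vec{d}}^{2n})\neq 0$.

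The main obstacle is the case analysis itself: one must verify that the hypothesis ``$\sum d_i\geq n+1$ or some $d_i\geq 2$'' partitions cleanly into the two sub-cases just treated. This is forced by elementary arithmetic, since the only excluded configurations are $\vec{d}=(1,\dots,1)$ with $k\leq n$, which is exactly the subcritical hyperplane complement regime and indeed has $\sh=0$, confirming the hypothesis is tight.
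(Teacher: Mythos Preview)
Your proposal is correct and matches the approach the paper states just before the proposition: combine spectral-sequence degeneration for $k\geq n+1$ with \cite[Cor.~1.2]{ganatra2020symplectic} for the case where some $d_i\geq 2$.

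It is worth noting that the paper also records, in the text immediately following the proposition, a cleaner alternative covering the entire range $\sum_i d_i \geq n+1$: since $X_{n+1}^{2n}$ is Weinstein deformation equivalent to $D^*\mathbb{T}^n$ (which has nonvanishing symplectic cohomology via the loop-space isomorphism), and since $X_{n+1}^{2n} \ws X_{\vec{d}}^{2n}$ by Theorem~\ref{thm:embeddings} whenever $\sum_i d_i\geq n+1$, Observation~\ref{obs:vanishingSH} yields $\sh(X_{\vec{d}}^{2n})\neq 0$ directly. That route bypasses the degree-gap analysis entirely and handles all of $\sum_i d_i\geq n+1$ in one stroke (not just the subcase $k\geq n+1$), at the cost of invoking the embedding construction from \S\ref{sec:constructions}; your argument has the complementary virtue of staying entirely within the Morse--Bott model for $\bdy X_{\vec{d}}^{2n}$.
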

\NI Note that $X_{n+1}^{2n}$ is Weinstein deformation equivalent to $D^*\mathbb{T}^n$, and in particular has nonvanishing symplectic cohomology. For $\sum_{i=1}^k d_i \geq n+1$, we have $X_{n+1}^{2n} \ws X_{\vec{d}}^{2n}$ by Theorem~\ref{thm:embeddings}, and hence $\sh(X_{\vec{d}}^{2n}) \neq 0$ by Observation~\ref{obs:vanishingSH}.

\begin{remark}
Note that for $\sum_{i=1}^k d_i < n+1$ with $d_i \geq 2$ for some $i \in \{1,\dots,k\}$, $X_{\vec{d}}^{2n}$ is not subcritical or flexible.
It would be interesting to see whether the assumption $\sum_{i=1}^{k'} d_i' \geq n+1$ in Corollary~\ref{cor:G_obstructions} could be weakened.  
\end{remark}

\subsection{Lower bound}\label{subsec:lower_bound}

In this subsection we prove the following lemma, which is based on index and first homology considerations:
\begin{lemma}\label{lem:lb_sft}
For $\vec{d} = (d_1,\dots,d_k) \in \Z_{\geq 1}^k$ with $\sum_{i=1}^k d_i \geq n+1$, we have $\G\lll \T^{n-1}p\rrr(X^{2n}_{\vec{d}}) \geq \sum_{i=1}^k d_i$.
\end{lemma}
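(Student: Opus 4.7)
The plan is to show that every rigid punctured pseudoholomorphic curve in $X^{2n}_{\vec{d}}$ contributing to $\aug^s\lll \T^{n-1}p\rrr$ has at least $\sum_{i=1}^k d_i$ positive ends, via a direct Fredholm index computation. Since the word-length-$s$ component of $\pi_1 \circ \wh{\aug}\lll \T^{n-1}p\rrr$ is precisely $\aug^s\lll \T^{n-1}p\rrr$, the existence of such a contributing curve with $s \leq l$ is a necessary condition for $t^{n-1}$ to lie in $\pi_1(\I^{\leq l}(X^{2n}_{\vec{d}}))$, which yields the claimed inequality.

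Let $u$ be a genus-zero curve in $X^{2n}_{\vec{d}}$ with $s$ positive ends asymptotic to Reeb orbits $\gamma_{\vec{v}_1}^{A_1},\dots,\gamma_{\vec{v}_s}^{A_s}$, satisfying the constraint $\lll \T^{n-1}p\rrr$. Rigidity of $u$ against the $(4n-4)$-codimensional constraint forces $\ind(u) = 4n-4$. Two topological inputs feed into the index formula. First, since $u \subset X^{2n}_{\vec{d}}$, the asymptotic sum $\sum_{j=1}^s \vec{v}_j$ vanishes in $H_1(X^{2n}_{\vec{d}}) \cong \Z^k/(\vec{d})$, so $\sum_j \vec{v}_j = q\vec{d}$ for some $q \in \Z_{\geq 1}$ (nonzero since $u$ has positive area and hence at least one nontrivial asymptote). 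Second, capping $u$ by the small spanning disks which define the canonical framing $\triv$ of Lemma~\ref{lem:cz_well_def} yields a closed class in $\CP^n$ of degree $q$, so $c_1^\triv(u) = \langle c_1(T\CP^n), [\bar u]\rangle = (n+1)q$.

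Substituting Proposition~\ref{prop:compl_cz} into the index formula, with $\delta_j := n-1-|A_j| \leq n-1$, produces
\begin{align*}
\ind(u) &= (n-3)(2-s) + \sum_{j=1}^s\left(\delta_j - 2\vec{v}_j\cdot\vec{1}\right) + 2(n+1)q \\
&= (n-3)(2-s) + \sum_{j=1}^s \delta_j - 2q\sum_{i=1}^k d_i + 2(n+1)q.
\end{align*}
Setting this equal to $4n-4$, using $\sum_j \delta_j \leq s(n-1)$, and simplifying yields
\[ s \;\geq\; n+1 + q\!\left(\sum_{i=1}^k d_i - n - 1\right) \;\geq\; \sum_{i=1}^k d_i, \]
where the final inequality uses $q \geq 1$ together with the hypothesis $\sum d_i \geq n+1$. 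There is no genuine analytic obstacle: both $\ind(u)$ and $c_1^\triv(u)$ are topological invariants of the asymptotic data and the relative homology class, so the bound is insensitive to the choice of virtual perturbation scheme, and the entire argument rests on the explicit Reeb-orbit model of $\bdy X^{2n}_{\vec{d}}$ recorded in Theorem~\ref{thm:div_compl1} and Proposition~\ref{prop:compl_cz}.
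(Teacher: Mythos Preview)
Your proof is correct and follows essentially the same route as the paper's. The paper packages the same Fredholm index computation into a separate Lemma~\ref{lem:too_few_ends} (showing any formal curve with fewer than $\sum d_i$ positive ends has negative constrained index), whereas you compute the equivalent inequality $s \geq n+1 + q(\sum d_i - n - 1)$ directly; the key inputs---the $H_1$ identification $\Z^k/(\vec{d})$, the relative Chern number $c_1^{\triv}(u) = q(n+1)$, and the Conley--Zehnder bound $\delta_j \leq n-1$ from Proposition~\ref{prop:compl_cz}---are identical.
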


\begin{proof}
Let $\aug^k \lll \TT p \rrr: \odot^k\cclin(X_{\vec{d}}^{2n}) \rightarrow \K[t]$ for $k \in \Z_{\geq 1}$ denote the maps constituting the $\Li$ homomorphism $\aug \lll \TT p \rrr: \cclin(X_{\vec{d}}^{2n}) \rightarrow \K[t]$, i.e. the coefficient of $t^m$ in $\aug^k \lll \TT p \rrr$ counts curves with $k$ positive ends and a $\lll \T^m p\rrr$ local tangency constraint. 
Let $\aug\lll \T^m p\rrr: \odot^k\cclin(X) \rightarrow \K$ denote the $\Li$ augmentation whose constituent maps $\aug^k\lll \T^m p\rrr$ for $k \in \Z_{\geq 1}$ are given by post composing $\aug^k\lll \TT p \rrr$ with the projection to the $t^m$ component of $\K[t]$.

According to the definition of $\G\lll \T^{n-1}p\rrr(X^{2n}_{\vec{d}})$, it suffices to show that we have $\aug^l\lll \T^{n-1} p\rrr(\gamma_1,\dots,\gamma_l) = 0$ for any $l < \sum_{i=1}^k d_k$ and Reeb orbits $\gamma_1,\dots,\gamma_l$ in $\bdy X_{\vec{d}}^{2n}$.
Since $\aug^l\lll \T^{n-1}p\rrr$ counts index zero curves, this follows immediately from the next lemma. 
\end{proof}

\begin{lemma}\label{lem:too_few_ends}
Assume $\sum_{i=1}^k d_i \geq n+1$. Let $u$ be a formal curve in $X_{\vec{d}}^{2n}$ with $l < \sum_{i=1}^kd_i$ positive ends and satisfying the constraint $\lll \T^{n-1}p\rrr$.
Then we have $\ind(u) < 0$.
\end{lemma}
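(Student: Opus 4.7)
The plan is to compute $\ind(u)$ directly from the Fredholm index formula of \S\ref{subsec:sft_moduli_spaces} together with the Conley--Zehnder index formula of Proposition~\ref{prop:compl_cz}, and then verify negativity by a short case split according to whether $\sigma:=\sum_{j=1}^k d_j$ equals $n+1$ or exceeds it.

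First I would parametrize the positive asymptotics of $u$ as $\gamma^{A_1}_{\vec v_1},\dots,\gamma^{A_l}_{\vec v_l}$, with each $\vec v_i\in\Z_{\geq 0}^k\setminus\{\vec 0\}$ and $A_i\in\crit(f_{\vec v_i})$. Proposition~\ref{prop:compl_cz} gives $\cz_{\triv}(\gamma^{A_i}_{\vec v_i})=n-1-|A_i|-2|\vec v_i|$, where $|\vec v_i|:=\sum_{j=1}^k (\vec v_i)_j$. Next, I would compute $c_1^{\triv}(u)$ by capping each positive end with the small spanning disk in $\CP^n$ prescribed by the definition of $\triv$ (so that its intersection with $D_j$ is $(\vec v_i)_j$), producing a closed $2$-cycle $\bar u\subset\CP^n$ of some degree $D$. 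Equating intersections with each $D_j$ individually yields the key identity
\[
\sum_{i=1}^l \vec v_i \;=\; D\vec d,
\]
which, since each $\vec v_i\neq\vec 0$ has nonnegative entries, forces $D\geq 1$. By Lemma~\ref{lem:cz_well_def} this capping is a valid computation of $c_1^{\triv}$, giving $c_1^{\triv}(u)=\langle c_1(T\CP^n),[\bar u]\rangle=(n+1)D$.

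Plugging these into the index formula and subtracting the codimension $2n+2(n-1)-2=4n-4$ of the $\lll\T^{n-1}p\rrr$ constraint, the cross-terms collapse using $\sum_i|\vec v_i|=D\sigma$ to
\[
\ind(u)\;=\;2l-2n-2-\sum_{i=1}^l|A_i|-2D(\sigma-n-1).
\]

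Finally I would split into cases. If $\sigma=n+1$, the last term vanishes and $\ind(u)=2(l-\sigma)-\sum_i|A_i|<0$ since $l<\sigma$ and $|A_i|\geq 0$. If $\sigma>n+1$, using $l\leq\sigma-1$ one rewrites
\[
\ind(u)\;\leq\;2(\sigma-1)-2n-2-\sum_i|A_i|-2D(\sigma-n-1)\;=\;-2-2(D-1)(\sigma-n-1)-\sum_i|A_i|\;\leq\;-2\;<\;0,
\]
since $D\geq 1$ and $\sigma-n-1\geq 1$. The one step that actually requires thought is the identification $c_1^{\triv}(u)=(n+1)D$ via the capping disks, which is pinned down by Lemma~\ref{lem:cz_well_def} applied to the intersection profile $(\vec v_i)_j$; everything else is arithmetic bookkeeping.
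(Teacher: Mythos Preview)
Your proof is correct and follows essentially the same approach as the paper's: write out the Fredholm index using Proposition~\ref{prop:compl_cz}, identify $c_1^{\triv}(u)=(n+1)q$ with $q\geq 1$ via the capping by small spanning disks, and then bound. The paper avoids your case split by simply using $q(\sigma-n-1)\geq \sigma-n-1$ (valid since $q\geq 1$ and $\sigma-n-1\geq 0$), which collapses your displayed formula directly to $\ind(u)\leq 2l-2\sigma<0$; your split into $\sigma=n+1$ versus $\sigma>n+1$ is harmless but unnecessary.
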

\begin{proof}
For $u$ a formal curve in $X_{\vec{d}}^{2n}$ as in the statement of the lemma with $l$ positive ends, which we take to be of the form $\gamma_{\vec{v}_1}^{A_1},\dots, \gamma_{\vec{v}_l}^{A_l}$ for some vectors $\vec{v}_1,\dots,\vec{v}_l \in \Z_{\geq 0}^k$ and critical points $A_i \in \crit(f_{\vec{v}_i})$ for $i = 1,\dots,l$.
Note that we must have $\sum_{i=1}^l [\gamma_{\vec{v}_i}^{A_i}] = 0 \in H_1(X_{\vec{d}}^{2n})$, i.e. 
$\sum_{i=1}^l \vec{v}_i = q\vec{d}$
for some $q \in \Z_{\geq 1}$.

Let $\triv$ denote the framing of Reeb orbits in $\bdy X_{\vec{d}}^{2n}$ which extends over small spanning disks as in \S\ref{subsec:div_compl}, and let $c_1^{\triv}(u)$ denote the relative first Chern number of $u$ with respect to $\triv$.
By capping off each asymptotic Reeb orbit of $u$ with its small spanning disk, we obtain a formal two-sphere $S$ in $\CP^n$ of degree $q$. Since the trivialization $\triv$ extends over each of the small bounding disks, we have
$$c_1^{\triv}(u) = c_1(S) = q(n+1).$$

By the discussion in \S\ref{subsec:div_compl}, for each $1 \leq i \leq l$ we have $\cz_{\triv}(\gamma_{\vec{v_i}}^{A_i}) = \delta_i - 2\vec{v}\cdot \vec{1}$ for some $\delta_i \leq n-1$.
Noting that the constraint $\lll \T^{n-1} p\rrr$ is codimension $4n-4$, we then have
\begin{align*}
\ind(u) &= (n-3)(2-l) + \sum_{i=1}^l \cz(\gamma_{\vec{v}_i}^{A_i}) + 2c_1^{\triv}(u) - (4n-4)\\
&= (n-3)(2-l) + \sum_{i=1}^l\left( \delta_i - 2\vec{v}_i \cdot\vec{1}\right) +2q(n+1)- 4n + 4\\
&\leq (n-3)(2-l) + (n-1)l - 2\left(\sum_{i=1}^l \vec{v}_i \right)\cdot\vec{1} + 2q(n+1)- 4n + 4\\
&= (n-3)(2-l) + (n-1)l - 2q\left(\vec{d} \cdot \vec{1} - n - 1\right) - 4n + 4\\
&\leq (n-3)(2-l) + (n-1)l - 2\left(\sum_{i=1}^kd_i - n - 1\right) - 4n+4\\
&= 2l - 2\sum_{i=1}^k d_i\\
&< 0.
\end{align*}
\end{proof}

\subsection{Upper bound}\label{subsec:upper_bound} We prove the following lemma, which together with Lemma~\ref{lem:lb_sft} proves Theorem~\ref{thm:main_G_computation}:

\begin{lemma}\label{lem:ub}
For any $\vec{d} = (d_1,\dots,d_k) \in \Z_{\geq 1}^k$, we have $\G\lll \T^{n-1}p\rrr(X^{2n}_{\vec{d}}) \leq \sum_{i=1}^k d_i$.
\end{lemma}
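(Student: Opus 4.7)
The plan is to neck-stretch a generic line in $\CP^n$ satisfying $\lll \T^{n-1}p\rrr$ along $\bdy X_{\vec{d}}^{2n}\subset \CP^n$, and decode the resulting broken buildings as a Maurer--Cartan element in $\bar\cclin(X_{\vec{d}}^{2n})$ paired against $\wh\aug\lll\TT p\rrr$.

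I would begin with the classical nonzero Gromov--Witten invariant $\gw_{\CP^n,[H]}\lll \T^{n-1}p\rrr \neq 0$, counting degree-one rational curves through a generic $p\in\CP^n$ with $(n-1)$-fold contact to a generic local divisor germ at $p$; the space of lines through $p$ is $\CP^{n-1}$, and the $n-1$ successive osculation conditions cut out finitely many such lines generically. Using Theorem~\ref{thm:div_compl1}, embed $X_{\vec{d}}^{2n}$ into $\CP^n$ so that $N := \CP^n \setminus \Int X_{\vec{d}}^{2n}$ deformation retracts onto $D = D_1 \cup \cdots \cup D_k$, fix a generic almost complex structure on $\CP^n$ making each $D_i$ holomorphic and cylindrical near $\bdy X_{\vec{d}}^{2n}$, and stretch the neck along $\bdy X_{\vec{d}}^{2n}$.

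By SFT compactness the limit configurations decompose into a ``filling-level'' in $\wh X_{\vec{d}}^{2n}$, a ``cap-level'' in $\wh N$, and possibly intermediate symplectization levels. A generic line meets $D$ in $\sum_i d_i$ transverse points, each in an open stratum $\mr D_{\vec{e}_i}$. The index estimate of Lemma~\ref{lem:too_few_ends} becomes an equality $\ind(u) = 0$ exactly when the filling-level component has $l = \sum_i d_i$ positive ends, wrapping class $q=1$, and $\delta_i = n-1$ for all $i$, i.e.\ each positive end is asymptotic to a minimum orbit $\beta_i = \gamma_{\vec{e}_i}^{\min}$ with $\beta_i$ appearing exactly $d_i$ times. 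Correspondingly the cap-level consists of $\sum_i d_i$ standard local disks, each negatively asymptotic to some $\beta_i$ and intersecting $D_i$ once transversely; each such disk is rigid and regular with contribution $\pm 1$.

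Using the framework of \S\ref{subsec:CL}, I would assemble the cap-level curves (with their anchor contributions) into a Maurer--Cartan element $\mu \in \bar^{\leq \sum_i d_i} \cclin(X_{\vec{d}}^{2n})$, whose leading word-length-$\sum_i d_i$ term is $\beta_1^{\odot d_1} \odot \cdots \odot \beta_k^{\odot d_k}$ up to lower-weight corrections, and whose MC property $\wh\ell(\exp(\mu)) = 0$ encodes codimension-one boundary relations for curves in $\wh N$. The parametrized cobordism argument underlying neck stretching then yields
\[
\gw_{\CP^n,[H]}\lll \T^{n-1}p\rrr \;=\; \pm\,\langle \wh\aug\lll \TT p\rrr([\exp(\mu)]),\, t^{n-1} \rangle_{\pi_1},
\]
where $\langle -,-\rangle_{\pi_1}$ extracts the $t^{n-1}$-coefficient of the word-length-one component of the image in $\bar\K[t]$. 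Since the left side is nonzero, $t^{n-1} \in \pi_1(\I^{\leq \sum_i d_i}(X_{\vec{d}}^{2n}))$, giving $\G\lll \T^{n-1}p\rrr(X_{\vec{d}}^{2n}) \leq \sum_i d_i$. The main obstacle is constructing $\mu$ and verifying that the pairing above reproduces the ambient GW count; this requires a careful analysis of the one-dimensional moduli space boundaries of curves in $\wh N$ and their compatibility with symplectization-level anchors, as developed in \S\ref{subsec:CL}.
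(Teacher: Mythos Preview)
Your approach is essentially the same neck-stretching argument as the paper's, but you package the ``closedness'' of the resulting bar element through the Cieliebak--Latschev Maurer--Cartan formalism of \S\ref{subsec:CL}, whereas the paper proves it directly. Specifically, the paper first uses an index analysis on the cap side (Lemma~\ref{lem:formal_caps}) to show that every index-zero anchored plane in $N_{\vec{d}}^{2n}$ is negatively asymptotic to some $\beta_j$ with intersection vector $\vec{e}_j$; summing to $\vec{d}$ forces exactly $\sum_i d_i$ caps and pins down the bottom asymptotics as $\beta_1^{\times d_1},\dots,\beta_k^{\times d_k}$. It then verifies by hand (Lemma~\ref{lem:bar_cycle}) that the element $(\odot^{d_1}\beta_1)\odot\cdots\odot(\odot^{d_k}\beta_k)$ is closed in $\bar\cclin$, via an action/index argument showing no $\Li$ operation can be applied to a subcollection of the $\beta_j$'s. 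Your route gets closedness ``for free'' from the MC equation, which is cleaner conceptually but somewhat circular here since \S\ref{subsec:CL} is presented in the paper as a generalization of exactly this computation.

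A few points of imprecision in your write-up: the Maurer--Cartan element lives in the (completed) $\Li$ algebra $\whcclin(X;\wt\K)$, not in the bar complex; it is $\exp(\m)_{\wt{A}}$ for $A=[L]$ that lives in $\bar\cclin$, and the bound on its word length by $\sum_i d_i$ is not automatic---it requires precisely the cap-side index analysis of Lemma~\ref{lem:formal_caps} (or the energy/intersection argument in \S\ref{subsec:CL}). Also, your sentence ``a generic line meets $D$ in $\sum_i d_i$ transverse points'' is a heuristic for the pre-stretched curve, not a statement about the limiting building; the rigorous identification of the caps comes from Lemma~\ref{lem:formal_caps}. Finally, the paper shows there are \emph{no} lower-word-length corrections to $\exp(\m)_{\wt{[L]}}$---it is on the nose a multiple of $\beta_1^{\odot d_1}\odot\cdots\odot\beta_k^{\odot d_k}$---so your hedge ``up to lower-weight corrections'' is unnecessary once Lemma~\ref{lem:formal_caps} is in hand.
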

\NI The basic idea for getting an upper bound is as follows. Starting with the moduli space $\calM_{\CP^n,[L]}\lll \T^{n-1} p\rrr$ of degree one curves in $\CP^n$ satisfying a $\lll \T^{n-1}p\rrr$ local tangency constraint, we stretch the neck along the boundary of $X_{\vec{d}}^{2n}$, keeping the constraint in the interior (a similar approach appears in a slightly different context in \cite{CM2,tonk}).
Building on the discussion in \S\ref{subsec:hypersurf_compl}, we have strong control over the geometry of the complement $\CP^n \setminus X_{\vec{d}}^{2n}$, which is a small neighborhood of the divisor $D$.
We show that the outcome must be a nonzero count of curves in $X_{\vec{d}}^{2n}$ with precisely $\sum_{i=1}^k d_i$ positive ends, and moreover the corresponding collection of Reeb orbits must give rise to a cycle in $\bar\cclin(X_{\vec{d}}^{2n})$.

\sss

Let $N_{\vec{d}}^{2n}$ denote the closure of $\CP^n \setminus X_{\vec{d}}^{2n}$.
We begin with a lemma characterizing certain relative homology classes in $H_2(N_{\vec{d}}^{2n},\bdy N_{\vec{d}}^{2n})$.
\begin{lemma}\label{lem:rel_hom_es}
Consider the exact sequence
\begin{align*}
H_2(N_{\vec{d}}^{2n}) \xrightarrow[]{\;\;f\;\;} H_2(N_{\vec{d}}^{2n},\bdy N_{\vec{d}}^{2n}) \xrightarrow[]{\;\;\delta\;\;}  H_1(\bdy N_{\vec{d}}^{2n}),
\end{align*}
and suppose that $A \in H_2(N_{\vec{d}}^{2n},\bdy N_{\vec{d}}^{2n})$ lies in the kernel of $\delta$.
Then there is some $q \in \Z$ such that we have $A \cdot [D_i] = qd_i$ for each $i = 1,\dots,k$.
\end{lemma}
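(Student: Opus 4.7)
My plan is to use the long exact sequence of the pair to lift $A$ to an absolute homology class in $N_{\vec{d}}^{2n}$, push it forward to $\CP^n$, and compute the intersection numbers there via Bézout's theorem.

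First, since by hypothesis $\delta(A) = 0$, exactness of the long exact sequence of the pair $(N_{\vec{d}}^{2n}, \bdy N_{\vec{d}}^{2n})$ gives a class $A' \in H_2(N_{\vec{d}}^{2n})$ mapping to $A$ under the natural map $f: H_2(N_{\vec{d}}^{2n}) \to H_2(N_{\vec{d}}^{2n}, \bdy N_{\vec{d}}^{2n})$. Let $j: N_{\vec{d}}^{2n} \hookrightarrow \CP^n$ denote the inclusion. Since $H_2(\CP^n) = \Z\langle [L] \rangle$, where $[L]$ is the class of a projective line, there is a unique integer $q \in \Z$ with $j_*(A') = q[L]$. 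This is the $q$ that will appear in the statement.

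Second, I will identify $A \cdot [D_i]$ (the relative-absolute intersection pairing $H_2(N_{\vec{d}}^{2n}, \bdy N_{\vec{d}}^{2n}) \otimes H_{2n-2}(N_{\vec{d}}^{2n}) \to \Z$, which is well-defined by Poincaré--Lefschetz duality for the compact manifold-with-boundary $N_{\vec{d}}^{2n}$) with the absolute intersection $A' \cdot [D_i]$ in $N_{\vec{d}}^{2n}$; the equality $A \cdot [D_i] = A' \cdot [D_i]$ is naturality of the intersection pairing under the map $f$. Then since both $A'$ and $D_i$ are represented by closed cycles in $N_{\vec{d}}^{2n}$, and intersection numbers are preserved under inclusion into a larger ambient manifold, I can compute this intersection in $\CP^n$ instead:
\[
A \cdot [D_i] \;=\; j_*(A') \cdot j_*([D_i]) \;=\; q[L] \cdot d_i[H] \;=\; q\, d_i,
\]
using $j_*([D_i]) = d_i[H]$ (degree of the hypersurface) and $[L] \cdot [H] = 1$ in $\CP^n$. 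Crucially, the same $q$ works for every $i$ because it is determined by the absolute class $A'$ alone.

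I do not anticipate a serious obstacle here; the only real point to verify carefully is the compatibility of relative and absolute intersection pairings under $f$ and under $j_*$, which is standard naturality once one represents $A'$ by a smooth cycle transverse to each $D_i$ and counts signed intersection points (these count unchanged by enlarging the ambient manifold from $N_{\vec{d}}^{2n}$ to $\CP^n$).
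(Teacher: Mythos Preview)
Your proof is correct and follows essentially the same approach as the paper: lift $A$ to an absolute class via exactness, push forward to $\CP^n$ where $H_2$ is generated by the line class, and compute intersections there. The paper's proof is slightly terser but the argument is identical.
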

\begin{proof}
By exactness, we have $A = f(B)$ for some element $B \in H_2(N_{\vec{d}}^{2n})$.
Let $\iota: H_2(N_{\vec{d}}^{2n}) \rightarrow H_2(\CP^n)$ denote the map induced by the inclusion $N_{\vec{d}}^{2n} \subset \CP^n$.
Observe that we have $f(B) \cdot [D_i] = \iota(B) \cdot [D_i]$ for each $1 \leq i \leq k$. 
Since $H_2(\CP^n)$ is generated by the homology class $[L]$ of a line, we have $\iota(B) = q[L]$ for some $q \in \Z$, and hence
$A \cdot [D_i] = q[L]\cdot [D_i] = qd_i$ for each $1 \leq i \leq k$.
\end{proof}

\sss

By our transversality assumptions, 
each limiting configuration under the aforementioned neck stretching procedure must be a two-level building, with
\begin{itemize}
\item
top level in $N_{\vec{d}}^{2n}$ consisting of a collection of index zero curves, each with at least one negative end
\item
bottom level in $X_{\vec{d}}^{2n}$ consisting of a collection of index zero curves, each with at least one positive end,
\end{itemize}
such that the total configuration represents a sphere in class $[L] \in H_2(\CP^n)$.
Note that the bottom level has one ``main'' component $u$ which inherits the $\lll \T^{n-1}p\rrr$ constraint.
By grouping together components sharing a paired asymptotic end, excepting the positive ends of the main component, we can view this as a building with:
\begin{itemize}
\item
top level in $N_{\vec{d}}^{2n}$ consisting of a collection of index zero planes $C_1,\dots,C_l$, anchored in $X^{2n}_{\vec{d}}$
\item bottom level in $\CP^n$ consisting of just the main component $u$.
\end{itemize}
Note that since the anchored planes are composed of pseudoholomorphic curves, they have nonnegative energy, and by positivity of intersection\footnote{Strictly speaking positivity of {\em local} intersection points would require a more delicate discussion of virtual techniques. Here we only use need the fact that the total homological intersection number is positive, which is manifestly perturbation invariant.} the homological intersection number $[C_i] \cdot [D_j]$ is nonnegative for each $i \in \{1,\dots,l\}$ and $j \in \{1,\dots,k\}$.
Put $\vec{s}_i = ([C_i] \cdot [D_1] ,\dots, [C_i]\cdot [D_k]) \in \Z_{\geq 0}^k$.
Then, by Lemma~\ref{lem:formal_caps} below, each of the anchored planes $C_i$ is negatively asymptotic to $\beta_j$ for some $j \in \{1,\dots,k\}$ (recall that we defined the Reeb orbit $\beta_j$ in \S\ref{subsec:hypersurf_compl}), and we have $\vec{s}_i = e_j$.
Since the total configuration represents the line class in $H_2(\CP^n)$, we must have $\sum_{i=1}^l \vec{s}_i = \vec{d}$.
It follows that we have $l = \sum_{i=1}^k d_i$, and up to reordering the positive ends of $u$ are 
$\underbrace{\beta_1,\dots,\beta_1}_{d_1},\dots,\underbrace{\beta_k,\dots,\beta_k}_{d_k}$.
For brevity, in the sequel we denote this list by $\beta_1^{\times d_1},\dots,\beta_k^{\times d_k}$.

\begin{lemma}\label{lem:formal_caps}
Let $C$ be a formal plane in $N_{\vec{d}}^{2n}$, anchored in $X^{2n}_{\vec{d}}$, with negative asymptotic $\gamma_{\vec{v}}^A$. 
Assume that the homological intersection number of $C$ with each component of $D$ is nonnegative, and put
$\vec{s} := ([C]\cdot [D_1],\dots,[C]\cdot [D_k]) \in \Z_{\geq 0}^k$.
Assume also that $C$ has nonnegative symplectic area.
Then we have $\ind(C) \geq 0$. Moreover, if $\ind(C) = 0$, for some $j \in \{1,\dots,k\}$ we must have $\vec{v} = \vec{s} = \vec{e}_j$ and $|A|=0$, and hence $\gamma_{\vec{v}}^A = \beta_j$.
\end{lemma}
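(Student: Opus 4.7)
The plan is to reduce the index computation to a purely homological calculation in $\CP^n$ by capping off the single negative puncture of $C$ with the trivializing disk. Let $u$ denote a small spanning disk for $\gamma_{\vec{v}}^A$ as in the definition of the framing $\triv$, so that $[u] \cdot [D_i] = v_i$ and $\triv$ extends to a trivialization of $T\CP^n$ over $u$. Gluing $u$ (with the orientation that cancels the boundary of $C$ at $\gamma_{\vec{v}}^A$) produces a closed formal $2$-cycle $\Sigma$ in $\CP^n$, of some degree $q \in \Z$. Computing intersection numbers with the divisor components gives $qd_i = s_i - v_i$, i.e. $\vec{s} = \vec{v} + q\vec{d}$.

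The next step is to extract the sign of $q$ from the area hypothesis. Since $u$ is small, $\int_u d\la$ is arbitrarily small and positive, while $\int_\Sigma \omega_{FS} = q A_0$ with $A_0 > 0$ the Fubini--Study area of a line. The hypothesis that $C$ has non-negative symplectic area then forces $q \geq 0$. Meanwhile, additivity of relative first Chern numbers together with the fact that $\triv$ extends over $u$ gives $c_1^\triv(C) = c_1(\Sigma) = q(n+1)$. The same argument applies when $C$ is anchored, since the anchor components lie in $X_{\vec{d}}^{2n}$, which is disjoint from $D$, and the index, area, intersection numbers, and Chern number are all additive under the formal gluing along paired ends.

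Substituting into the index formula for a plane with one negative end and using Proposition~\ref{prop:compl_cz},
\begin{align*}
\ind(C) &= (n-3) - \cz_\triv(\gamma_{\vec{v}}^A) + 2c_1^\triv(C) \\
&= (n-3) - (n-1-|A|-2\vec{v}\cdot\vec{1}) + 2q(n+1) \\
&= -2 + |A| + 2\vec{v}\cdot\vec{1} + 2q(n+1).
\end{align*}
Since $|A| \geq 0$, $q \geq 0$, and $\vec{v} \cdot \vec{1} \geq 1$ (because $\vec{v} \neq \vec{0}$), we get $\ind(C) \geq 0$. Equality forces $|A| = 0$, $\vec{v} \cdot \vec{1} = 1$, and $q = 0$. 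The middle condition means $\vec{v} = \vec{e}_j$ for some $j$, then $q = 0$ gives $\vec{s} = \vec{v} = \vec{e}_j$, and $|A| = 0$ combined with $\delta = n-1-|A|$ gives $\delta(\gamma_{\vec{v}}^A) = n-1$; since $f_{\vec{e}_j}$ was chosen with a unique minimum, this identifies $\gamma_{\vec{v}}^A$ with $\beta_j$.

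The only delicate step is the bookkeeping for the orientation and sign convention when capping off the negative puncture --- in particular, verifying that the sign in $\vec{s} - \vec{v} = q\vec{d}$ is the one compatible with $q \geq 0$ under the area hypothesis, and that Chern-class additivity holds through all the paired ends of the anchor. These are standard but worth stating carefully, and once sorted out the rest of the argument is a direct index calculation.
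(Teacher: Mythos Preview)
Your proof is correct and follows essentially the same route as the paper: cap off with the small spanning disk to get a degree-$q$ sphere, read off $\vec{s}=\vec{v}+q\vec{d}$ and $c_1^{\triv}(C)=q(n+1)$, use the area hypothesis to force $q\geq 0$, then plug into the index formula. The paper derives $\vec{s}=\vec{v}+q\vec{d}$ via the relative homology Lemma~\ref{lem:rel_hom_es} and proves $q\geq 0$ through the wrapping-number identity $\int_C\omega = -\vec{\ww}\cdot\vec{s} - \calA(\gamma_{\vec{v}}^A)$ together with the action estimate from Theorem~\ref{thm:div_compl1}(6), whereas you go directly through $\int_\Sigma\omega = qA_0$; these are equivalent once one checks that $\int_u\omega$ is $O(\eps)$ (note that $d\la$ is not literally defined across $D$, so you should write $\int_u\omega$ rather than $\int_u d\la$, and the smallness ultimately traces back to the $\eps$ in Theorem~\ref{thm:div_compl1} rather than being automatic from $u$ being ``small''). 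With that minor correction your argument goes through.
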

\begin{proof}
We will assume $n \geq 2$ (the case $n=1$ can be checked directly and is left to the reader).
Note that due to the anchors $C$ make involve components in $X^{2n}_{\vec{d}}$, and hence does not a priori define a homology class in $H_2(N_{\vec{d}}^{2n},\bdy N_{\vec{d}}^{2n})$. 
However, since $H_1(X^{2n}_{\vec{d}},\bdy X^{2n}_{\vec{d}}) = 0$, for each component of $C$ lying in $X^{2n}_{\vec{d}}$ there is a formal curve in $\bdy X^{2n}_{\vec{d}}$ with the same topological type, positive asymptotics, and energy, and after making these replacements we get a homology class in $H_2(N_{\vec{d}}^{2n},\bdy N_{\vec{d}}^{2n})$ which we will denote by $[C']$.

For $i = 1,\dots,k$, let $c_i$ be a small disk intersecting $D_i$ once tranversely and negatively and disjoint from the other components of $D$ as in \S\ref{subsec:div_compl}, and let $[c_i] \in H_2(N_{\vec{d}}^{2n},\bdy N_{\vec{d}}^{2n})$ denote its relative homology class.
Since $\sum_{i=1}^k v_i[c_i] - [C']$ lies in the kernel of the connecting map $H_2(N_{\vec{d}}^{2n},\bdy N_{\vec{d}}^{2n}) \rightarrow H_1(\bdy N_{\vec{d}}^{2n})$, by Lemma~\ref{lem:rel_hom_es} we have $\vec{s} = \vec{v} + q\vec{d}$ for some $q \in \Z$. 

Let $\rho: N_{\vec{d}}^{2n} \rightarrow [0,1]$ be a function which is $0$ near $\bdy N_{\vec{d}}^{2n}$ and $1$ outside of a small neighborhood $U$ of $\bdy N_{\vec{d}}^{2n}$, 
and let $\wt{\omega}$ be the two-form on $N_{\vec{d}}^{2n}$ given by $\rho \la$ on $U$ and $\omega$ on $N_{\vec{d}}^{2n} \setminus U$. 
Let $\vec{\ww} = (\ww_1,\dots,\ww_k)$ denote the corresponding symplectic wrapping numbers as in \S\ref{subsec:div_compl}, i.e. $-\sum_{i=1}^k \ww_i[D_i] \in H_{2n-2}(N_{\vec{d}}^{2n};\R)$ is Poincar\'e--Lefschetz dual to $[\wt{\omega}] \in H^2(N_{\vec{d}}^{2n},\bdy N_{\vec{d}}^{2n};\R)$.
By Theorem~\ref{thm:div_compl1}, these agree with the holomorphic wrapping numbers, so we have $\ww_i = -d_i < 0$ for $i = 1,\dots,k$.
We then have:
\begin{align*}
	0 \leq \int_{[C']} \omega 
	&= \int_{[C']} \wt{\omega} + \int_{[C']}(\omega - \wt{\omega})\\
	&= \left(-\sum_{i=1}^k \ww_i[D_i]\right) \cdot [C] - \calA(\gamma_{\vec{v}}^A)\\
 &\leq -\vec{\ww} \cdot \vec{s} + \vec{\ww} \cdot \vec{v} + \eps\\
 &= -q\vec{\ww}\cdot \vec{d} + \eps,
\end{align*}
where in the second line we have used the definition of symplectic wrapping numbers and in the third line we have used part (6) of Theorem~\ref{thm:div_compl1}.
Since each component of $\vec{d}$ is positive, each component of $\ww$ is nonpositive, and $\eps$ is arbitrarily small, we must have $q \geq 0$.

Observe that we have $c_1^{\tau}(C) = q(n+1)$, 
since we can glue $C$ to the small spanning disk for $\gamma_{\vec{v}}^A$ with its opposite orientation to get a formal sphere of degree $q$.
We then have
\begin{align*}
\ind(C) &= n-3 + 2c_1^{\triv}(C) - \cz_{\triv}(\gamma_{\vec{v}}^A)\\
&= n-3 + 2q(n+1) - (\delta(\gamma_{\vec{v}}^A) - 2\vec{v} \cdot \vec{1})\\
&\geq -2 + 2q(n+1) + 2\vec{v} \cdot \vec{1},
\end{align*}
with equality only if $\delta(\gamma_{\vec{v}}^A) = n-1$. 
We recall here that $\vec{v} \cdot \vec{1}$ is a shorthand for $\sum_{i=1}^k v_i$, and $\delta$ is defined in the discussion following Proposition~\ref{prop:compl_cz}.
Moreover, $-2 + 2\vec{v} \cdot \vec{1}$ and $2q(n+1)$ are both nonnegative, with equality only if 
$\vec{v}\cdot \vec{1} = 1$ and $q = 0$, in which case we must have $\vec{v} = \vec{e}_j$ for some $j \in \{1,\dots,k\}$.
Since $\beta_j$ is the unique Reeb orbit of the form $\gamma^A_{\vec{e}_j}$ satisfying $\delta(\gamma^A_{\vec{e}_j}) = n-1$, the lemma follows.

 \end{proof}

Since neck stretching produces a cobordism of moduli spaces, the above discussion shows that the count of curves in $X_{\vec{v}}^{2n}$ with positive ends $\beta_1^{\times d_1},\dots,\beta_k^{\times d_k}$ and satisfying the constraint $\lll \T^{n-1}p\rrr$ is nonzero.
Therefore, to complete the proof of Lemma~\ref{lem:ub}, it suffices to show that $(\odot^{d_1}\beta_1) \odot \dots \odot (\odot^{d_k}\beta_k)$ is closed under the bar differential:

\begin{lemma}\label{lem:bar_cycle}
The element $(\odot^{d_1}\beta_1) \odot \dots \odot (\odot^{d_k}\beta_k) \in \bar\cclin(X^{2n}_{\vec{d}})$ is a cycle with respect to the bar differential.
\end{lemma}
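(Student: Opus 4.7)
The plan is to expand the bar differential $\wh{\ell}$ applied to $(\odot^{d_1}\beta_1)\odot\cdots\odot(\odot^{d_k}\beta_k)$ as a sum indexed by multiplicity vectors $\vec{c} = (c_1,\dots,c_k)$ with $0 \le c_i \le d_i$ and $j := \sum_i c_i \ge 1$, each term arising from $\ell^j$ applied to the sub-multiset of $\beta$'s encoded by $\vec{c}$, and to show that every such term vanishes via action, first-homology, and index constraints in the spirit of Lemmas~\ref{lem:too_few_ends} and~\ref{lem:formal_caps}. For any putative output orbit $\gamma_{\vec{v}}^A$ arising from an anchored configuration with anchor planes $P_1,\dots,P_m$ in $X^{2n}_{\vec{d}}$ whose positive ends are $\gamma_{\vec{w}_t}^{A_t}$, the identification $H_1(X^{2n}_{\vec{d}}) = \Z^k/(\vec{d})$ from \S\ref{subsec:hypersurf_compl} forces $\vec{v} = \vec{c} + r\vec{d}$ for some $r \in \Z$, and a Bezout-type argument as in the proof of Lemma~\ref{lem:formal_caps} forces $\vec{w}_t = q_t \vec{d}$ with $q_t \in \Z_{\ge 1}$.

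First I would rule out anchors. Repeating the index computation of Lemma~\ref{lem:formal_caps} for each anchor $P_t$, the equation $\ind(P_t) = 0$ together with the capped first Chern number $c_1^{\triv}(P_t) = -q_t(n+1)$ forces
\[
\delta_t \;=\; 3 - n + 2q_t(D + n + 1),
\]
where $D := \sum_i d_i$; this violates $\delta_t \le n - 1$ whenever $q_t \ge 1$, so no anchors can occur. Energy nonnegativity for the remaining symplectization curve $u_0$ then forces $r \le 0$, while positivity $v_\ell = c_\ell + r d_\ell \ge 0$ forces $r \ge 0$ whenever some $c_\ell = 0$. Specializing the index computation from the proof of Lemma~\ref{lem:too_few_ends} (with the $\lll \T^{n-1}p\rrr$ constraint removed and $\ind(u_0) = 1$ imposed) then yields
\[
2j + 2r(D - n - 1) \;\le\; 3.
\]

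Next I would run a case analysis on $\vec{c}$. If some $c_\ell = 0$ then $r = 0$ and the inequality above gives $j \le 1$, so the only contribution of this type is $\ell^1(\beta_i)$ for some $i$. If instead every $c_\ell \ge 1$ and $k \ge 2$, then either $r = 0$ (again forcing $j \le 1$, contradicting $j \ge k \ge 2$) or $r \le -1$, in which case $c_\ell \ge -r d_\ell \ge d_\ell$ combined with $c_\ell \le d_\ell$ forces $\vec{c} = \vec{d}$, $r = -1$, and $\vec{v} = \vec{0}$ (not a valid Reeb orbit); the case $k = 1, j \ge 2$ is analogous. The only potentially surviving contributions are therefore $\ell^1(\beta_i)$.

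Finally, for $\ell^1(\beta_i)$ the constraints give $\vec{v} = \vec{e}_i$ and (to leading order) zero energy, so the counted cylinder must be asymptotically a cover of a trivial cylinder. In the Morse--Bott picture underlying the nondegenerate perturbation, the residual contribution to $\ell^1\beta_i$ is the Morse cohomology differential of $\beta_i$ on the stratum $\mr{S}_{\vec{e}_i}/S^1 \cong \mr{D}_{\vec{e}_i}$, while potential inter-family cascade contributions are excluded by the same homology-and-action argument as above. Since $D_i \subset \CP^n$ is connected by the Lefschetz hyperplane theorem (for $n \ge 2$; the case $n = 1$ is handled directly by the surface considerations of Example~\ref{ex:surfaces}) and $\mr{D}_{\vec{e}_i}$ is open dense in $D_i$, hence connected and orientable as a complex manifold, the unique minimum $\beta_i$ of $f_{\vec{e}_i}$ represents the generator of $H^0(\mr{D}_{\vec{e}_i})$ and is therefore a Morse cocycle, giving $\ell^1\beta_i = 0$. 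The main subtlety I expect is justifying this Morse--Bott/cascade interpretation rigorously within the chosen virtual perturbation framework, although this is standard in the SFT literature.
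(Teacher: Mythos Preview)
Your anchor--elimination step contains a sign error that invalidates it. For a plane $P_t$ in $X_{\vec d}^{2n}$ with positive end $\gamma_{\vec w_t}^{A_t}$ and $\vec w_t=q_t\vec d$, capping with the small spanning disk yields a degree-$q_t$ sphere in $\CP^n$, exactly as in the proof of Lemma~\ref{lem:too_few_ends}; hence $c_1^{\triv}(P_t)=+q_t(n+1)$, not $-q_t(n+1)$. With the correct sign, $\ind(P_t)=0$ reads $\delta_t=3-n+2q_t\bigl(\sum_id_i-n-1\bigr)$, and this does \emph{not} violate $\delta_t\le n-1$ in general (e.g.\ $q_t=1$ and $\sum_id_i\le 2n-1$). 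So anchors cannot be excluded by index alone, and your subsequent use of ``$\ind(u_0)=1$'' for the bare symplectization component is unjustified.

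The paper sidesteps this by working with the entire anchored curve $C$ throughout: since index is additive under formal gluing, the index formula for $C$ as a formal curve with $l$ positive ends and one negative end applies directly, and together with $q=0$ (from energy and positivity of $\vec v$, as you also argue) gives $\ind(C)\ge 2l-2$. This forces $l=1$ from $\ind(C)=1$. Only \emph{then} are anchors excluded, and by energy rather than index: the total energy satisfies $E(C)\le\eps$, whereas any anchor plane in $X_{\vec d}^{2n}$ has energy at least $\min_i(-\ww_i)-\eps\ge 1-\eps$.

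For the remaining case $\ell^1(\beta_i)$, your Morse--Bott argument is reasonable in spirit but, as you acknowledge, hinges on a cascade description whose rigorous justification within the virtual framework is nontrivial. The paper's route is different and fully self-contained: one considers the compactified moduli space of index-one planes in $N_{\vec d}^{2n}$ with negative asymptotic $\gamma_{\vec e_i}^A$. Its boundary consists of two-level buildings pairing an index-zero plane in $N_{\vec d}^{2n}$ asymptotic to $\beta_i$ with precisely the index-one symplectization cylinder in question. The neck-stretching argument already established that the count of such index-zero planes is nonzero, so the signed count of the cylinders must vanish. This cobordism argument avoids Morse--Bott analysis entirely and is the key idea your proposal is missing.
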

\begin{proof}
Recall that the $\Li$ operations on $\cclin(X_{\vec{d}}^{2n})$ count index one curves in the symplectization $\R \times \bdy X_{\vec{d}}^{2n}$, anchored in $X_{\vec{d}}^{2n}$, with some number $l \geq 1$ of positive ends and one negative end. Let $C$ be such an anchored curve, with top ends $\beta_{i_1},\dots,\beta_{i_l}$ for some $i_1,\dots,i_l \in \{1,\dots,k\}$ such that $\sum_{j=1}^l \vec{e}_{i_j} \leq \vec{d}$,
and let $\gamma_{\vec{v}}^A$ be the bottom end.
First homology considerations give
$$[\gamma_{\vec{v}}^A] = [\beta_{i_1}] + \dots + [\beta_{i_l}] \in H_1(X_{\vec{d}}^{2n}),$$
and hence $\vec{v} = \vec{e}_{i_1} + \dots + \vec{e}_{i_l} + q\vec{d}$ for some $q \in \Z$.

By nonnegative of energy and Stokes' theorem, we  have
\begin{align*}
0 \leq E(C) &= \calA(\beta_{i_1}) + \dots + \calA(\beta_{i_l}) - \calA(\gamma_{\vec{v}}^A) \\&\leq -\ww \cdot (\vec{e}_{i_1} + \dots + \vec{e}_{i_l} - \vec{v}) + (l+1)\eps \\&= -\ww \cdot (-q\vec{d}) + (l+1)\eps.
\end{align*}
Since each component of $\ww$ is negative, each component of $\vec{d}$ is positive, and $\eps > 0$ is arbitrarily small,
we must have $q \leq 0$.
This implies that $q = 0$, since each component of $\vec{v}$ is nonnegative.
We then have
\begin{align*}
\ind(C) &= (n-3)(1-l) + \sum_{j=1}^l \cz_{\triv}(\beta_{i_j}) - \cz_{\triv}(\gamma_{\vec{v}}^A) + 2c_1^{\triv}(C)\\
&= (n-3)(1-l) + l(n-3) - (\delta(\gamma_{\vec{v}}^A) - 2 \vec{v} \cdot \vec{1})\\
&\geq (n-3)(1-l) + l(n-3) - (n-1) + 2l\\
&= 2l-2.
\end{align*}
Note that $2c_1^{\triv}(C) = 0$ since $q = 0$ (c.f. Lemma~\ref{lem:cz_well_def}).
This shows that $\ind(C) \geq 2$ unless $l = 1$.

The case $l=1$ corresponds to a cylinder $C$ in $\R \times \bdy X_{\vec{d}}^{2n}$, anchored in $X_{\vec{d}}^{2n}$, with top end $\beta_{i_1}$ and bottom end $\gamma_{\vec{v}}^A$, and $\ind(C) = 1$ means that we have
 $\cz(\gamma_{\vec{v}}^A) = \cz(\beta_{i_1}) - 1$.
Note that by action and first homology considerations as above we must have $\vec{v} = \vec{e}_{i_1}$.
Furthermore, we claim that $C$ cannot be anchored. 
Indeed, we have $E(C) \leq \eps$ (c.f. the proof of Lemma~\ref{lem:formal_caps}), whereas any curve in $X_{\vec{d}}^{2n}$ would have energy at least that of the minimal action of a Reeb orbit in $\bdy X_{\vec{d}}^{2n}$, which is in turn bounded from below by
$\min\limits_{1 \leq i \leq k}-\ww_i - \eps \geq 1-\eps.$

Although we cannot a priori rule out the existence of $C$ as an honest index one cylinder in $\R \times \bdy X_{\vec{d}}^{2n}$, it suffices to show that the count of such cylinders (modulo target translations) is vanishing for any fixed choice of negative asymptotic Reeb orbit $\gamma_{\vec{e}_{i_1}}^A$.
To see this, consider the compactified moduli space of index one pseudoholomorphic planes in $N_{\vec{d}}^{2n}$ with negative asymptotic $\gamma_{\vec{e}_{i_1}}^A$ and homological intersection number one with $D_i$. 
Its boundary consists of
two-level configurations, with:
\begin{itemize}
\item top level consisting of an index zero plane in $N_{\vec{d}}^{2n}$ with negative asymptotic $\beta_{i_1}$
\item bottom level consisting of an index one cylinder in the symplectization $\R \times \bdy N_{\vec{d}}^{2n}$, positively asymptotic to $\beta_{i_1}$ and negatively asymptotic to $\gamma_{\vec{e}_{i_1}}^A$.
\end{itemize}
We claim that the count of planes in the top level is nonzero.
Since the total count of boundary configurations is zero, it then follows that the count of cylinders in the bottom level is necessarily zero, as desired.

Finally, the preceding claim follows from the neck stretching procedure described at the beginning of this subsection.
Indeed, by energy considerations as above, 
each of the anchored planes $C_1,\dots,C_l$ is in fact unanchored.
Consequently, since neck stretching induces a cobordism of moduli spaces, the count of index zero planes in $N^{2n}_{\vec{d}}$ with negative asymptotic $\beta_i$ is necessarily nonzero for each  $1 \leq i \leq k$.

\end{proof}

\subsection{The Cieliebak--Latschev formalism}\label{subsec:CL}
In this subsection, which is logically independent from the rest of the paper, we provide a broader perspective on the upper bound in the previous subsection based on Maurer--Cartan theory. This approach, which builds on unpublished work of Cieliebak--Latschev and is discussed also in \cite[\S 4]{HSC} from a slightly different perspective, can be used to produce bar complex cycles in greater generality.
In particular, we prove:
\begin{thm}\label{thm:cl_ub}
Let $M^{2n}$ be a closed symplectic manifold of dimension $2n \geq 4$ and let $A \in H_2(M)$ be a homology class such that the count $\gw_{M,A}\lll \T^m p \rrr \in \Q$
is nonzero\footnote{We note that the invariant $\gw_{M,A}\lll \T^m p\rrr$ is well-defined via classical perturbation techniques if $M$ is semipositive by \cite[Prop. 2.2.2]{McDuffSiegel_counting}, whereas the definition for general $M$ necessitates virtual perturbations.} for some $m \in \Z_{\geq 0}$. 
Let $X^{2n}$ be a $2n$-dimensional Liouville domain admitting a symplectic embedding into $M$ such that the
induced map $H^{2n-2}(M) \rightarrow H^{2n-2}(\ovl{M \setminus X})$ is injective.
Then we have
$$ \G\lll \T^m p \rrr(X) < \infty.$$
\end{thm}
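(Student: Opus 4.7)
The plan is to extend the neck-stretching argument of Lemma~\ref{lem:ub} to a general closed symplectic manifold $M$, extracting the relevant bar-complex cycle from capping data in $N := \ovl{M \setminus X}$ via the Cieliebak--Latschev framework. Fix a compatible almost complex structure $J$ on $M$ which is cylindrical in a collar of $\bdy X$, and place the point $p$ together with its local tangency germ inside $\op{int}(X)$. Stretching the neck along $\bdy X \subset M$ and applying SFT compactness, each limiting building decomposes (after grouping paired punctures) into a main component $u \subset \wh{X}$ carrying the $\lll \T^m p\rrr$ constraint with positive asymptotics $\gamma_1, \dots, \gamma_s$, together with a collection of index-zero anchored planes $C_1, \dots, C_s$ in $\wh{N}$ (each anchored in $\wh{X}$), where $C_j$ is negatively asymptotic to $\gamma_j$.

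Following the framework in \cite[\S 4]{HSC}, the signed counts $n_\gamma \in \K$ of such anchored index-zero planes in $\wh{N}$ with negative asymptotic $\gamma$ assemble into a Maurer--Cartan element $\m := \sum_\gamma n_\gamma\,\gamma$ for the $\Li$ algebra $\cclin(X)$; the MC equation $\sum_{k \geq 1} \tfrac{1}{k!} \ell^k(\m, \dots, \m) = 0$ arises from boundary analysis of one-dimensional anchored moduli spaces in $\wh{N}$. Consequently the bar exponential $e^\m = \sum_{k \geq 0} \tfrac{1}{k!}\m^{\odot k}$ is a cycle in a (completed) bar complex, and matching broken buildings across the neck stretch yields the identity
$$
\gw_{M, A}\lll \T^m p\rrr \;=\; \bigl[\, \pi_1 \bigl( \wh{\aug}\lll \TT p \rrr (e^\m) \bigr) \,\bigr]_{t^m},
$$
where the brackets denote the $t^m$-coefficient; the right-hand side is nonzero by hypothesis.

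To convert this into the finite bound $\G\lll \T^m p\rrr(X) < \infty$ requires two further ingredients. First, the symplectic area of curves in class $A$ bounds the total action of the positive asymptotics of $u$; since Reeb orbits of $\bdy X$ have a positive lower action bound, the number $s$ of positive ends of $u$ is a priori bounded above by some $l = l(A, \bdy X) < \infty$. Only words of length at most $l$ contribute to the right-hand side above, and a truncation-with-corrections argument within the action-filtered bar complex produces a genuine cycle $\xi \in \bar^{\leq l}\cclin(X)$ with the same image under $\pi_1 \circ \wh{\aug}\lll \TT p\rrr$. Second, the injectivity hypothesis $H^{2n-2}(M) \hookrightarrow H^{2n-2}(N)$, equivalent via Poincar\'e--Lefschetz duality to injectivity of $H_2(M) \to H_2(M, X) \cong H_2(N, \bdy N)$, rigidifies the relative homology class of each cap $C_j$ in terms of its Reeb-orbit asymptotic data, so that the counts $n_\gamma$ are well-posed and the above identity accounts for the full Gromov--Witten count rather than a partial sum. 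Combining these, $t^m \in \pi_1(\I^{\leq l}(X))$, hence $\G\lll \T^m p\rrr(X) \leq l < \infty$.

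The principal obstacle is virtual perturbation theory: the Maurer--Cartan identity for $\m$ and the chain-level matching with $\gw_{M, A}\lll \T^m p\rrr$ simultaneously require a perturbation scheme compatible with the $\Li$ structure on $\cclin(X)$, the augmentation $\aug\lll \TT p\rrr$, and closed Gromov--Witten theory of $M$. Granting the expected properties of any such scheme (cf.~Remark~\ref{rmk:virtual}), the argument sketched above delivers the result.
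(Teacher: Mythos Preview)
Your overall strategy matches the paper's---stretch the neck, assemble a Maurer--Cartan element from caps in $N$, and use $\exp(\m)$ to produce a bar-complex cycle hitting $t^m$---but there is a genuine gap in how you extract a \emph{finite} cycle and in how you isolate the class $A$.

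The identity you write, $\gw_{M,A}\lll \T^m p\rrr = [\pi_1(\wh{\aug}\lll \TT p\rrr(e^\m))]_{t^m}$, is not correct as stated: the right-hand side sums over \emph{all} homology classes in $H_2(M)$, not just $A$. Relatedly, the counts $n_\gamma$ you form are sums over all relative homology classes of anchored planes with fixed asymptotic $\gamma$, and there is no reason these are finite. Your proposed ``truncation-with-corrections'' by word length does not repair this: truncating $\exp(\m)$ at length $l$ does not give a cycle, since the bar differential applied to longer words can land in $\bar^{\leq l}$, and you have not explained what the corrections are or why they terminate. Finally, the injectivity hypothesis is not there to make $n_\gamma$ well-posed; it plays a different and more specific role.

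The paper's device is to enlarge the coefficients: work with a modified complex $\cclin(X;\wt{\K})$ whose generators are pairs $(\gamma,[\Sigma])$ with $[\Sigma]$ a $2$-chain in $M$ bounding $\gamma$. This complex carries an energy filtration (so $\m$ is honestly defined in the completion) and, crucially, an $H_2(N,\bdy N)$-grading preserved by the $\Li$ operations. The graded piece $\exp(\m)_{\wt{A}}$ is then automatically a cycle, and a short computation using $[\wt{\omega}]\cdot\wt{A}$ shows every summand has energy bounded above, so by SFT compactness it lies in the \emph{uncompleted} bar complex. The injectivity of $H^{2n-2}(M)\to H^{2n-2}(N)$ (equivalently of $H_2(M)\to H_2(N,\bdy N)$) is used precisely to ensure that $A$ is the unique class restricting to $\wt{A}$, so projecting the full neck-stretching relation $\pi_1\circ\wh{\aug}\lll\T^m p\rrr(\exp(\m)) = \gw_M\lll\T^m p\rrr \in \wh{\K[H_2(M)]}$ to the $\wt{A}$-piece yields exactly $\gw_{M,A}\lll\T^m p\rrr$ rather than a sum that might cancel.
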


Let $X$ be a Liouville domain which is symplectically embedded into a closed symplectic manifold $M$.
As before, for simplicity we work over $\K = \Q$. 
Let $\cclin(X;\wt{\K})$ denote the $\Li$ algebra as described in \S\ref{subsec:obs_higher}, but with the following modifications:
\begin{itemize}
\item
as a $\K$-module, the generators of $\cclin(X;\wt{\K})$ are pairs $(\gamma,[\Sigma])$, where $\gamma$ is a good Reeb orbit in $\bdy X$ and $[\Sigma]$ is a $2$-chain $\Sigma$ in $M$ with boundary $\gamma$, modulo boundaries of $3$-chains in $M$
\item the differential and higher $\Li$ operations count the same
curves as before, and the 
bounding $2$-chain $\Sigma$ of the output is given by concatenating these with the bounding $2$-chains of the inputs.
\end{itemize}
Here we are assuming that $\bdy X$ has nondegenerate Reeb dynamics, which we can always achieve by a small perturbation.
Note that, for a given Reeb orbit $\gamma$, the set of possible choices of $[\Sigma]$ is a torsor over $H_2(M)$.

Each generator $(\gamma,[\Sigma])$ of $\cclin(X;\wt{\K})$ has a well-defined energy, given by the integrating the symplectic form $\omega$ of $M$ over $\Sigma$. This induces a decreasing $\R$-filtration on $\cclin(X;\wt{\K})$, and we denote the corresponding completed $\Li$ algebra by $\whcclin(X;\wt{\K})$.
The bar complex $\bar\cclin(X;\wt{\K})$ also inherits a decreasing filtration by energy (i.e. the energy of an elementary tensor is the sum of the energies of its components), and we denote the corresponding completed chain complex by $\wh{\bar}\cclin(X;\wt{\K})$.

The Cieliebak--Latschev formalism associates to the symplectic embedding $X \shookrightarrow M$ a Maurer--Cartan element
$$\m \in \whcclin(X;\wt{\K}),$$
given by the (possibly infinite) count of index zero planes in the symplectic cap $N:= \ovl{M \setminus X}$, anchored\footnote{More precisely, each such configuration is a two-level pseudoholomorphic building, with top level in $N$ and bottom level in $X$, such that the total configuration after formally gluing along each pair of Reeb orbits is a plane.} in $X$
(c.f. \cite[\S 4]{HSC}). 
Note that this sum is well-defined in $\whcclin(X;\wt{\K})$, since by SFT compactness there are only finitely many configurations with energy below any given value.
Since $\m$ lies in the positive part of the filtration on $\whcclin(X;\wt{\K})$, it has a well-defined exponential
\begin{align}
\exp(\m) = \sum_{l=1}^\infty \frac{1}{l!}\underbrace{\m \odot \dots \odot \m}_l \in \wh{\bar}\cclin(X;\wt{\K}), 
\end{align}
and the Maurer--Cartan equation for $\m$ is equivalent to the fact that $\exp(\m)$ is a cycle.

Given a pair $(\gamma,[\Sigma])$ as above, note that $[\Sigma]$ defines a well-defined element in $H_2(M,X) \cong H_2(N,\bdy N)$,
and this gives rise to a natural $H_2(N,\bdy N)$-grading on the $\Li$ algebra $\cclin(X;\wt{\K})$, and also on its completed bar complex $\wh{\bar}\cclin(X;\wt{\K})$.
Given a homology class $A \in H_2(M)$, let $\wt{A} \in H_2(N,\bdy N)$ denote its restriction to $N$ (i.e. we apply Poincar\'e--Lefschetz duality to the input and output of the restriction map $H^{2n-2}(M) \rightarrow H^{2n-2}(N)$), and let $\exp(\m)_{\wt{A}} \in \wh{\bar}\cclin(X;\wt{\K})$ denote the part of $\exp(\m)$ lying in the graded piece corresponding to $\wt{A}$. 
Then $\exp(\m)_{\wt{A}} \in \wh{\bar}\cclin(X;\wt{\K})$ is itself a cycle.

We claim that $\exp(\m)_{\wt{A}}$ in fact lifts to a cycle $x$ in the uncompleted bar complex $\bar\cclin(X;\wt{\K})$. Indeed, it suffices to show that there is a uniform upper bound on the energy of each summand of $\exp(\m)_{\wt{A}}$, since then the SFT compactness theorem implies that there are only finitely many such terms.
To justify the claim, consider a summand of $\exp(\m)_{\wt{A}}$, which we represent as a formal curve in $N$, anchored in $X$. We denote by $C$ the resulting formal curve in $N$ after throwing away any anchors in $X$. Then $C$ represents the homology class $\wt{A} \in H_2(N,\bdy N)$, and we denote its negative asymptotic Reeb orbits by $\gamma_1,\dots,\gamma_l$.
Let $\la$ be a primitive one-form for $\omega$ defined near $\bdy N$, and let $\rho: N \rightarrow [0,1]$ be a function which is $0$ near $\bdy N$ and $1$ outside of a small neighborhood $U$ of $\bdy N$.
Let $\wt{\omega}$ be the two-form on $N$ given by $d(\rho \la)$ on $U$ and $\omega$ on $N \setminus U$.
We have
\begin{align*}
\int_C \omega &= \int_C \wt{\omega} + \int_C (\omega - \wt{\omega})\\
&= \int_C \wt{\omega} + \int_{C \cap U} d([1-\rho]\la)\\
&= \int_C \wt{\omega} - \sum_{i=1}^l \calA(\gamma_i).
\end{align*}
Note that $\int_C\wt{\omega}$ depends only on the homology classes $[\wt{\omega}] \in H^2(N, \bdy N;\R)$ and $[C] = \wt{A} \in H_2(N,\bdy N;\R)$,
and hence we have $\int_C \omega \leq [\wt{\omega]} \cdot [\wt{A}]$ as desired.

Now let $\K[H_2(M)]$ denote the group ring of $H_2(M)$, and let $\wh{\K[H_2(M)]}$ denote its completion with respect to symplectic area.
Put $\gw_{M}\lll \T^m p \rrr := \sum\limits_{A \in H_2(M)} e^A \gw_{M,A}\lll \T^m p \rrr \in \wh{\K[H_2(M)]}$.
In general, neck stretching curves with a $\lll \T^m p \rrr$ constraint gives the relation in $\wh{\K[H_2(M)]}$ of the form
\begin{align}
\pi_1\circ \wh{\aug}\lll \T^m p \rrr(\exp(\m)) = \gw_{M}\lll \T^m p \rrr.
\end{align}
Here $\pi_1 \circ \wh{\aug}\lll \T^m p\rrr: \wh{\bar}\cclin(X) \rightarrow \K[\wh{H_2(M)}]$ is the induced map counting curves with a $\lll \T^m p\rrr$ local tangency constraint as in \S\ref{subsec:obs_simp}, except that we now concatenate these curves with the input curves to define homology classes in $H_2(M)$, and we pass to completions.
Since the restriction map $H_2(M) \rightarrow H_2(N,\bdy N)$ is injective by assumption, $A \in H_2(M)$ is the unique class which restricts to $\wt{A} \in H_2(N,\bdy N)$. Therefore by projecting the above relation to the graded piece corresponding to $\wt{A}$, we get
\begin{align}
\pi_1 \circ \wh{\aug}\lll \T^m p \rrr(\exp(\m)_{\wt{A}}) = \gw_{M,A}\lll \T^m p \rrr \,e^A.
\end{align}
Note that $\exp(\m)_{\wt{A}}$ is in fact a finite sum by the SFT compactness theorem, so it follows from the definition that we have
\begin{align*}
\G\lll \T^m p\rrr(X) < \infty,
\end{align*}
which completes the proof of Theorem~\ref{thm:cl_ub}.

\begin{example}
In the case of the natural inclusion $X_{\vec{d}}^{2n} \subset \CP^n$ for a tuple $\vec{d} = (d_1,\dots,d_k)$, we have
$H_{2n-2}(N_{\vec{d}}^{2n})\cong \K\langle [D_1],\dots,[D_k]\rangle$, where $D_1,\dots,D_k$ represent hypersurfaces of degrees $d_1,\dots,d_k$, and the induced map $H^{2n-2}(\CP^n) \rightarrow H^{2n-2}(N_{\vec{d}}^{2n})$ is injective, so Theorem~\ref{thm:cl_ub} applies.
In this case, by the argument in \S\ref{subsec:upper_bound}, for $A = [L]$ the line class in $H_2(\CP^n)$ we have that
$\exp(\m)_{\wt{A}}$ is a multiple of $(\odot^{d_1}\beta_1) \odot \dots \odot (\odot^{d_k}\beta_k)$
\end{example}

\section{Computations for hypersurface complements II: avoiding virtual perturbations}\label{sec:computationsII}

The main goal in this section is to prove Theorem~\ref{thm:main_combinatorial}. 
In \S\ref{subsec:some_lemmas}, we revisit the neck stretching argument from the previous subsection and analyze the possible degenerations in more detail without virtual perturbations. Subsequently, in \S\ref{subsec:completing} we assemble these ingredients and complete the proof.

\subsection{Some lemmas}\label{subsec:some_lemmas}

In this subsection we formulate various technical results about our moduli spaces of interest, providing the main ingredients for the proof in the next subsection.
Fix $\vec{d} = (d_1,\dots,d_k) \in \Z_{\geq 1}^k$ for some $k \in \Z_{\geq 1}$. 
Note that for this subsection we do not need to assume $\sum_{i=1}^k d_i \geq n+1$.

\begin{lemma}\label{lem:main_cpnt_nonneg_ind}
Let $J$ be a generic admissible almost complex structure on the symplectic completion of $X_{\vec{d}}^{2n}$, 
and let $u$ be a $J$-holomorphic curve in $\wh{X}_{\vec{d}}^{2n}$ satisfying the constraint $\lll \T^{n-1}p\rrr$.
Then we have $\ind(u) \geq 0$.
\end{lemma}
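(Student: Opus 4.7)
The proof will split into two cases according to whether $u$ is somewhere injective.

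If $u$ is simple, then for a generic admissible $J$ and a generic choice of the point $p$ and the local divisor germ at $p$, the standard simple-curve transversality result (see e.g.\ \cite[Thm.~7.2]{wendl2016lectures}) extended to local tangency constraints as in \cite{CM2, McDuffSiegel_counting} implies that $u$ is regular. The constrained moduli space is therefore a smooth manifold of dimension $\ind(u)$ near $u$, so its non-emptiness forces $\ind(u) \geq 0$.

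If $u$ is not simple, write $u = v \circ \phi$ with $v : \Sigma' \to \wh{X}^{2n}_{\vec{d}}$ simple and $\phi : \Sigma \to \Sigma'$ a branched cover of degree $\kappa \geq 2$. Let $z \in \Sigma$ be the marked point at which $u$ satisfies the tangency constraint, and let $b$ denote the local ramification index of $\phi$ at $z$. A direct computation in local holomorphic coordinates at $z$ shows that $b$ must divide $n$ and that $v$ passes through $p$ with tangency order $n/b - 1$, so $v$ satisfies the weaker constraint $\lll \T^{n/b - 1} p \rrr$ at $\phi(z)$. Applying the previous paragraph to $v$ yields $\ind(v) \geq 0$. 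To deduce $\ind(u) \geq 0$, one compares the two Fredholm indices using: (i) the multiplicativity $c_1^{\triv}(u) = \kappa \cdot c_1^{\triv}(v)$; (ii) the behavior of the Conley--Zehnder indices under iteration given by Proposition~\ref{prop:compl_cz}, which places the $\mu$-fold iterate of a Reeb orbit $\gamma_{\vec{w}}^B$ in the perturbed Morse--Bott family over $\mr{D}_{\mu \vec{w}}$ with CZ index of the form $\delta' - 2 \mu \vec{w} \cdot \vec{1}$ for some $\delta' \leq n - 1$; (iii) the Riemann--Hurwitz formula for $\phi$; and (iv) the difference $(4n-4) - (2n + 2n/b - 4)$ of the two constraint codimensions.

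The main obstacle is this last index comparison. While the multiplicativity of the Chern number and the linear-in-multiplicity scaling of the $-2 \vec{w} \cdot \vec{1}$ term in the CZ formula fit together cleanly, the Morse-theoretic correction $\delta$ depends on the chosen perturbation of the iterated Morse--Bott locus $\mr{S}_{\mu \vec{w}}/S^1$ in a way not directly controlled by the perturbation on $\mr{S}_{\vec{w}}/S^1$. The uniform bound $\delta \leq n - 1$ from Proposition~\ref{prop:compl_cz}, combined with the divisibility $b \mid n$ and the inequality $b \leq \kappa$ (since $b$ is a local degree of a degree-$\kappa$ cover), are the key numerical facts that allow the arithmetic to close and ultimately yield $\ind(u) \geq 0$.
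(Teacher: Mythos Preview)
Your strategy --- pass to the underlying simple curve and compare indices --- is exactly the paper's. But there are two real problems with the execution.

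First, the claim that $b \mid n$ is false. If $v$ has exact contact order $m$ with the local divisor at $\phi(z)$, then $u$ has contact order $bm$ at $z$. The constraint $\lll\T^{n-1}p\rrr$ only asks that $bm \geq n$; nothing forces $bm = n$, so $b$ need not divide $n$ and $v$ need not have contact order exactly $n/b$. The paper accordingly introduces the \emph{actual} contact order $m$ of the simple curve and records only the inequality $m(\a+1) \geq n$ (in its notation $\a+1$ is your $b$). This is not cosmetic: if you impose on $v$ only the weaker constraint $\lll\T^{n/b-1}p\rrr$ rather than $\lll\T^{m-1}p\rrr$, the constrained index of $v$ increases by $2(m - n/b)$, and the margin in the comparison $\ind(u)$ versus $\kappa\,\ind(v)$ can become negative. (One easy fix is to replace $n$ by the actual contact order $n' = bm$ of $u$, since $\ind_n(u) \geq \ind_{n'}(u)$; then $b \mid n'$ trivially, and you are back to the paper's setup.)

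Second, asserting that ``the arithmetic closes'' is not a proof; this is precisely where the content is. The paper assembles the pieces you list --- Riemann--Hurwitz in the form $\a + \b \leq 2\kappa-2$, the puncture count $l = \kappa\ovl{l} - \b$, the bound $\sum_i\delta_i \geq \kappa\sum_j\ovl{\delta}_j - (n-1)\b$ (which uses $\ovl{\delta}_j \leq n-1$), and the tangency inequality $m(\a+1) \geq n$ --- into a nontrivial chain of estimates ending in
\[
\ind(u) - \kappa\,\ind(\ovl{u}) \;\geq\; (m-1)\bigl(2\kappa - 2n/m\bigr) \;\geq\; 0,
\]
the last step using $\kappa \geq \a+1 \geq n/m$. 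You have the right ingredients, but the estimate is delicate enough that it needs to be written out.
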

\begin{proof}

Let us take the positive Reeb orbit asymptotics of $u$ to be $\gamma_{\vec{v}_1}^{A_1},\dots,\gamma_{\vec{v}_l}^{A_l}$, and put 
\begin{align*}
\delta_i := \delta(\gamma_{\vec{v}_i}^{A_i}) = n-1-|A_i|
\end{align*}
for $i = 1,\dots,l$.
As before, the constraint $\lll \T^{n-1}p\rrr$ is codimension $4n-4$, and the index of $u$ is given by
\begin{align*}
\ind(u) &=  (n-3)(2-l) - (4n - 4) +\sum_{i=1}^l\cz(\gamma_{\vec{v}_i}^{A_i}) + 2c_1^{\triv}(u) \\
&= (n-3)(2-l) - (4n - 4) + \sum_{i=1}^l \delta_i - 2\left( \sum_{i=1}^l \vec{v}_i \right) \cdot \vec{1} + 2c_1^{\triv}(u).
\end{align*}
Since $\sum_{i=1}^l [\gamma_{\vec{v}_i}^{A_i}] = 0 \in H_1(X_{\vec{d}}^{2n})$, we must have $\sum_{i=1}^l \vec{v}_i = q\vec{d}$ for some $q \in \Z_{\geq 1}$. Then the sphere in $\CP^n$ obtained by capping off each end of $u$ by the corresponding small spanning disk as in \S\ref{subsec:div_compl} has degree $q$, and since $\triv$ extends to these disks we have $c_1^{\triv}(u) = q(n+1)$.

Now suppose that $u$ is a $\kappa$-fold branched cover of its underlying simple curve $\ovl{u}$,
i.e. $u$ is given by the precomposition of $\ovl{u}$ with an order $\kappa$ branched cover $\Phi$, which extends over the punctures to a holomorphic map $\CP^1 \rightarrow \CP^1$.
Let us denote the positive Reeb orbit asymptotics of $\ovl{u}$ by $\gamma_{\vec{w}_1}^{B_1},\dots,\gamma_{\vec{w}_{\ovl{l}}}^{B_{\ovl{l}}}$, and put $\ovl{\delta}_i := \delta(\gamma_{\vec{w}_i}^{B_i})$ for $i = 1,\dots,\ovl{l}$.
The point in the domain of $u$ satisfying the $\lll \T^{n-1}p\rrr$ constraint is mapped by $\Phi$ to a point in the domain of $\ovl{u}$ satisfying a constraint $\lll \T^{m-1}p\rrr$ for some $m \in \Z_{\geq 1}$. 
Taking into account this constraint, the index of $\ovl{u}$ is given by
\begin{align}
\ind(\ovl{u}) &= (n-3)(2-\ovl{l}) - (2n + 2m - 4) + \sum_{i=1}^{\ovl{l}} \ovl{\delta}_i - 2\left(\sum_{i=1}^{\ovl{l}}\vec{w}_i\right)\cdot \vec{1} + 2c_1^{\triv}(\ovl{u}).
\end{align}

We define the {\em branching order} of a branched cover of the Riemann sphere at a point in the domain to be the local degree of the map at that point minus one. 
Let $\a$ denote the branching order of $\Phi$ at the point satisfying the $\lll\T^{n-1}p\rrr$ constraint, and let $\b$ be the sum of the branching orders of $\Phi$ over all of its punctures.
Note that we must have 
$$\a \leq \kappa - 1,$$
and by the Riemann--Hurwitz formula we have 
$$\a + \b \leq 2\kappa - 2.$$
Also, since $u$ satisfies the constraint $\lll \T^{n-1}p\rrr$ and since the contact order to the local divisor gets multiplied by the local degree of the cover, we must have
$$ m(\a+1) \geq n.$$
Furthermore, looking at the punctures we have
$$ l = \kappa \ovl{l} -\b.$$
We also have:
\begin{claim}
 $\sum_{i=1}^l \delta_i \geq \kappa\sum_{i=1}^{\ovl{l}} \ovl{\delta}_i - (n-1)\b.  $
\end{claim}
\begin{proof}
Fix $i \in \{1,\dots,\ovl{l}\}$, and let $j_1,\dots,j_\vk$ be the indices of the punctures of $u$ which cover the $i$th puncture of $\ovl{u}$.
Let $\b_i$ denote the contribution to $\b$ coming from these punctures, i.e. we have $\b_i = \kappa - \vk$.
Since $\ovl{\delta}_i \leq n-1$, we have
\begin{align*}
\delta_{j_1} + \cdots + \delta_{j_\vk} = \vk \ovl{\delta}_i = (\kappa - \b_i) \ovl{\delta}_o \geq \kappa \ovl{\delta}_i - (n-1)\b_i.
\end{align*}
The claim follows after summing over $i$.
\end{proof}
Lastly, observe that we have
$$ \sum_{i=1}^l \vec{v}_i = \kappa \sum_{i=1}^{\ovl{l}}\vec{w}_i,$$
and hence $c_1^{\triv}(\ovl{u}) = q(n+1)/\kappa$.

We then have
\begin{align*}
\ind(u) &\geq (n-3)(2-\kappa\ovl{l}+\b) - (4n - 4) + \left(\kappa\sum_{i=1}^{\ovl{l}}\ovl{\delta}_i - (n-1)\b\right) - 2\kappa\left(\sum_{i=1}^{\ovl{l}}\vec{w}_i\right) \cdot \vec{1} + 2q(n+1),
\end{align*}
and therefore
\begin{align*}
\ind(u) - \kappa \ind(\ovl{u}) &\geq (n-3)(2-\kappa \ovl{l} + \b) - 4n + 4 - (n-1)\b -\kappa(n-3)(2-\ovl{l}) + \kappa(2n + 2m - 4)\\
&= -2n - 2 + 2\kappa - 2\b + 2m\kappa\\
&\geq -2n - 2 + 2\kappa - 2(2\kappa - 2 - \a) + 2m\kappa\\
&= -2n + 2 + 2\a + 2\kappa(m-1)\\
&\geq -2n + 2 + 2(n/m-1) +2\kappa(m-1)\\
&= (m-1)(2\kappa - 2n/m)\\
&\geq (m-1)(2\kappa - 2[a+1])\\
& \geq 0.
\end{align*}
Since $\ovl{u}$ is simple, it is regular for $J$ generic, and hence we have $\ind(\ovl{u}) \geq 0$.
\end{proof}

We now revisit the neck stretching procedure for the moduli space $\calM_{\CP^n,[L]}\lll \T^{n-1} p\rrr$ along the contact type hypersurface $\bdy X_{\vec{d}}^{2n} \subset \CP^n$ as in \S\ref{subsec:upper_bound}.
Consider a generic compatible almost complex structure $J$ on $\CP^n$ which is cylindrical near $\bdy X_{\vec{d}}^{2n}$. Let $J_X$ and $J_N$ denote the induced admissible almost complex structures on the symplectic completions of $X_{\vec{d}}^{2n}$ and $N_{\vec{d}}^{2n}$ respectively, given by restricting $J$ and then extending over the cylindrical ends. 
Similarly, let $J_{\R \times \bdy X}$ denote the resulting admissible almost complex structure on the symplectization $\R \times \bdy X_{\vec{d}}^{2n}$, given by restricting $J_X$ to $\bdy X_{\vec{d}}^{2n}$ and extending $\R$-invariantly.
We assume that $J_X,J_N,J_{\R \times \bdy X}$ are generic, so that simple curves are regular.
Now let $\{J_t\}_{t \in [0,1)}$ be the one-parameter family of almost complex structures on $\CP^n$ realizing the neck stretching as described in \S\ref{subsubsec:neck_stretching}.
\begin{lemma}\label{lem:cpn_ns_result}
Under the above neck stretching, each limiting configuration corresponding to $t = 1$ in the compactified moduli space $\ovl{\calM}_{\CP^n,[L]}^{\{J_t\}}\lll \T^{n-1} p\rrr$
is a two-level pseudoholomorphic building with
\begin{itemize}
\item top level consisting of $\sum_{i=1}^k d_i$ index zero regular $J_N$-holomorphic planes in $\CP^{n}$, with negative asymptotic ends $\beta_1^{\times d_1},\dots,\beta_k^{\times d_k}$ and lying in the homology classes $\underbrace{[c_1],\dots,[c_1]}_{d_1},\dots,\underbrace{[c_k],\dots,[c_k]}_{d_k} \in H_2(N_{\vec{d}}^{2n},\bdy N_{\vec{d}}^{2n})$ respectively 
\item bottom level consisting of a single index zero regular $J_X$-holomorphic (genus zero) curve in $X_{\vec{d}}^{2n}$ with positive ends $\beta_1^{\times d_1},\dots,\beta_k^{\times d_k}$ and satisfying the constraint $\lll \T^{n-1}p\rrr$. 
\end{itemize}
\end{lemma}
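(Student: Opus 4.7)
The plan is to apply SFT compactness to the neck-stretched family and then to pin down the limiting configuration via careful index bookkeeping. By the discussion in \S\ref{subsubsec:neck_stretching}, every element of the $t=1$ fiber of $\ovl{\calM}^{\{J_t\}}_{\CP^n, [L]}\lll \T^{n-1}p\rrr$ is a pseudoholomorphic building with one level in $N_{\vec{d}}^{2n}$ on top, some number $r \geq 0$ of symplectization levels in $\R \times \bdy X_{\vec{d}}^{2n}$ in the middle, and one level in $X_{\vec{d}}^{2n}$ on the bottom. Since the point constraint lives at $p$ in the interior of $X_{\vec{d}}^{2n}$, the full constraint $\lll \T^{n-1}p\rrr$ is inherited by a single distinguished component $u_\star$ of the bottom level. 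Because the parametrized moduli space has expected dimension one and its $t=1$ locus consists of isolated broken configurations, every such configuration satisfies the index identity
\begin{align*}
\sum_v \ind(v) = (4n-4) + r,
\end{align*}
summed over all components $v$, with the $+r$ accounting for the $\R$-translation quotient in each symplectization level.

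Next I plan to bound each summand from below. Lemma~\ref{lem:main_cpnt_nonneg_ind} yields $\ind(u_\star) \geq 4n-4$ as a Fredholm index (i.e. $\geq 0$ after the codimension $4n-4$ of the tangency constraint is folded in), while generic regularity of $J_X, J_N$ gives $\ind(v) \geq 0$ for each simple component in $N$ or $X$, and each nontrivial simple component in a symplectization level contributes $\ind(v) \geq 1$ after quotienting by $\R$. For the top-level components I will adapt the proof of Lemma~\ref{lem:formal_caps}: combining nonnegativity of symplectic area with Stokes' theorem applied to the two-form $\wt{\omega}$ from \S\ref{subsec:div_compl}, together with the fact that the symplectic wrapping numbers are $\ww_i = -d_i < 0$, one obtains either $\ind(v) \geq 2$ or the structural conclusion that $v$ is a plane negatively asymptotic to some $\beta_j$ and lying in the relative class $[c_j] \in H_2(N_{\vec{d}}^{2n},\bdy N_{\vec{d}}^{2n})$. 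Multi-covers are handled by passing to the underlying simple curve and applying the Riemann--Hurwitz bookkeeping already deployed in the proof of Lemma~\ref{lem:main_cpnt_nonneg_ind}.

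Combining these lower bounds with the index identity forces $r=0$, so no nontrivial symplectization levels survive; moreover $u_\star$ is the unique component of the bottom level and has index zero, and every top-level component is an index-zero plane of the special form identified above. Matching the total homology class $[L] \in H_2(\CP^n)$, whose intersection number with each $D_i$ is $d_i$, then fixes the count of top-level planes in the relative class $[c_i]$ to be exactly $d_i$, and the asymptotic matching between the two levels forces the positive ends of $u_\star$ to be $\beta_1^{\times d_1}, \dots, \beta_k^{\times d_k}$. Regularity of each limiting component is automatic from genericity together with the fact that each falls into a uniquely determined index-zero simple moduli problem.

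The main obstacle I anticipate is the extension of Lemma~\ref{lem:formal_caps} to genus-zero top-level components with several negative ends, including possibly multiply covered ones, which is needed to certify $\ind(v) \geq 0$ with equality forcing a plane of the stated form. The argument should parallel the combined area and Riemann--Hurwitz bookkeeping of Lemmas~\ref{lem:main_cpnt_nonneg_ind} and~\ref{lem:formal_caps}, making explicit use of $\ww_i = -d_i$ and the Morse-theoretic bound $\delta \leq n-1$ on Conley--Zehnder indices.
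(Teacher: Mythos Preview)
Your overall strategy is right, but there is a structural simplification in the paper that sidesteps exactly the obstacle you flag, and your index identity is not quite correct.

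First, the identity $\sum_v \ind(v) = (4n-4) + r$ is off. Index is additive under formal gluing, so the sum of the Fredholm indices of all components (with the $\lll \T^{n-1}p\rrr$ constraint folded into $u_\star$) equals the index of the closed line, namely $0$. The $\R$-translations in the $r$ symplectization levels affect the \emph{dimension of the stratum}, not the index sum; there is no ``$+r$'' correction. In any case, the paper does not argue by tracking symplectization levels directly.

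Second, and more importantly, the paper avoids the extension of Lemma~\ref{lem:formal_caps} that you anticipate needing. Rather than bounding the index of each individual component, the paper first rules out nodes (since the total degree is one and every component has positive degree, there can be only one component in each connected piece after gluing), and then \emph{formally glues} all components except $u_\star$ into a collection of formal planes in $N_{\vec d}^{2n}$ anchored in $X_{\vec d}^{2n}$, one for each positive end of $u_\star$. Lemma~\ref{lem:formal_caps} is stated for formal anchored planes precisely so that it applies here without any regularity hypothesis and without any multi-ended or multiple-cover analysis. Together with Lemma~\ref{lem:main_cpnt_nonneg_ind} for $u_\star$ and the total index being zero, this forces every piece to have index zero and identifies the asymptotics.

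Finally, you are missing a step: once the formal planes are shown to have index zero with the prescribed asymptotic and intersection data, one must still argue that each is an \emph{honest} $J_N$-holomorphic plane (no anchors, no extra symplectization levels). The paper does this by energy and exactness considerations: every top-level component must intersect $D$ (since $\omega$ is exact on $N_{\vec d}^{2n}\setminus D$), positivity of intersection with $[C]\cdot[D]=1$ forces a single top component, and the energy bound $E(C)\le \eps$ rules out anchors or additional levels. Simplicity then follows since $\beta_j$ is primitive, and regularity of $u_\star$ comes from Lemma~\ref{lem:main_curves_are_simple}.
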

\begin{proof}
By the SFT compactness theorem, any limiting configuration consists of a multilevel pseudoholomorphic building, with
\begin{itemize}
\item top level $J_N$-holomorphic in $N_{\vec{d}}^{2n}$ 
\item some number (possibly zero) of levels $J_{\R \times \bdy X}$-holomorphic in the symplectization $\R \times \bdy X_{\vec{d}}^{2n}$ 
\item bottom level $J_X$-holomorphic in $X_{\vec{d}}^{2n}$ consisting of a ``main component'' $u$ satisfying the $\lll \T^{n-1}p\rrr$ constraint, along with some number (possibly zero) of additional unconstrained components. 
\end{itemize}
Note that in principle some of the components in a given level could be joined by nodes (each of which increases the expected codimension by two), but this is easily ruled out.
Namely, by formally gluing together all pairs of asymptotic ends, we obtain a possibly nodal formal sphere in $\CP^n$.
Since the total degree is one and each component has positive area and hence positive degree, this precludes any nodes.

Let us now formally glue together all pairs of ends except for those corresponding to the positive asymptotics of the main component $u$ to arrive at the following simplified picture:
\begin{itemize}
\item
top level consisting of some number $l \geq 1$ of formal planes in $N_{\vec{d}}^{2n}$, anchored in $X_{\vec{d}}^{2n}$
\item bottom level consisting of a single main pseudoholomorphic component $u$ in $X_{\vec{d}}^{2n}$ with $l$ positive ends and satisfying the $\lll \T^{n-1}p\rrr$ constraint.
\end{itemize}
By Lemma~\ref{lem:formal_caps}, each of the formal planes has nonnegative index.
Similarly, by Lemma~\ref{lem:main_cpnt_nonneg_ind}, the main component has nonnegative index.
Since the total configuration has index zero, it follows that the main component and each of the formal planes $C$ must have index zero. 
Lemma~\ref{lem:formal_caps} then implies that each of the components in the top level has negative end $\beta_j$ for some $j \in \{1,\dots,k\}$, and has homological intersection $\delta_{ij}$ with $[D_i]$ for each $i \in \{1,\dots,k\}$.

We wish to show that each of these formal planes $C$ corresponds to an honest pseudoholomorphic curve in $N_{\vec{d}}^{2n}$.
We can suppose that the configuration underlying $C$ consists of a multilevel pseudoholomorphic building with
\begin{itemize}
\item top level in $N_{\vec{d}}^{2n}$
\item some number (possibly zero) of levels in the symplectization $\R \times \bdy X_{\vec{d}}^{2n}$
\item bottom level (possibly empty) in $X_{\vec{d}}^{2n}$.
\end{itemize}
Since the symplectic form on $N_{\vec{d}}^{2n}$ is exact away from $D$ (c.f. part (2) of Theorem~\ref{thm:div_compl1}), by Stokes' theorem each component in the top level must intersect $D$ nontrivially. 
Since the homological intersection number of $[C]$ with $[D] = \sum_{i=1}^k [D_i]$ is $1$, it follows by positivity of intersection that there is exactly one component in the top level.
Also, as in the proof of Lemma~\ref{lem:bar_cycle}, energy considerations rule out any component in the bottom level $X_{\vec{d}}^{2n}$. Indeed, note that we have $E(C) \leq \eps$ (c.f. the proof of Lemma~\ref{lem:formal_caps}), whereas any such component would have at least one positive Reeb orbit asymptotic and hence energy at least $1-\eps$.
Similarly, there cannot be any components without any negative ends in a symplectization level.
It follows that the component in the top level must be a plane, and by similar energy considerations there are no symplectization levels (since any component with energy less than $\eps$ would necessarily be a trivial cylinder, violating the stability condition in the SFT compactness theorem), so $C$ is an honest plane in $N_{\vec{d}}^{2n}$. Evidently $C$ is simple since $\beta_j$ is a primitive Reeb orbit. 
Finally, since $J_X$ is generic, regularity of the component $u$ follows from the simple Lemma~\ref{lem:main_curves_are_simple} below.
\end{proof}

\begin{lemma}\label{lem:main_curves_are_simple}
Let $J_X$ be any admissible almost complex structure on the symplectic completion of $X_{\vec{d}}^{2n}$,
and consider $u \in \calM^{J_X}_{X_{\vec{d}}^{2n}}(\beta_1^{\times d_1},\dots,\beta_k^{\times d_k})\lll \T^{n-1} p\rrr$.
Then $u$ is simple, and hence regular if $J_X$ is generic.
\end{lemma}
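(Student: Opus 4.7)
The plan is to argue by contradiction. Suppose $u$ is not simple, so $u = \ovl{u} \circ \Phi$ where $\Phi : \CP^1 \to \CP^1$ is a $\kappa$-fold branched cover with $\kappa \geq 2$ and $\ovl{u}$ is a simple punctured $J_X$-holomorphic curve in $\wh{X}_{\vec{d}}^{2n}$. The key observation is that each asymptotic Reeb orbit $\beta_j$ is primitive: if a puncture of $\ovl{u}$ is asymptotic to an $m$-fold cover of some simple orbit and $\Phi$ has local degree $p$ at a preimage puncture, then the corresponding puncture of $u$ is asymptotic to an $(mp)$-fold cover. Since $\beta_j$ is simply covered, $mp = 1$, forcing $m = p = 1$. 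Hence $\Phi$ is unramified over every puncture of $\ovl{u}$, each puncture of $\ovl{u}$ has exactly $\kappa$ preimages, and the positive asymptotics of $\ovl{u}$ must be $\beta_1^{\times d_1/\kappa}, \dots, \beta_k^{\times d_k/\kappa}$. In particular $\kappa$ divides each $d_i$.

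Next I invoke the first homology constraint. Since $\ovl{u}$ is an honest curve in the Liouville domain $X_{\vec{d}}^{2n}$ (with no negative ends), the sum of the homology classes of its positive Reeb orbit asymptotics must vanish in $H_1(X_{\vec{d}}^{2n}) \cong \Z^k/(\vec{d})$. By Theorem~\ref{thm:div_compl1}(5) and the discussion in \S\ref{subsec:hypersurf_compl}, $[\beta_j] = [\bdy c_j]$, so this sum equals $\sum_{j=1}^k (d_j/\kappa)[\bdy c_j]$, which corresponds to $\vec{d}/\kappa \in \Z^k$. The vanishing condition forces $\vec{d}/\kappa = q\vec{d}$ in $\Z^k$ for some $q \in \Z$, and since $\vec{d} \neq \vec{0}$ this gives $\kappa q = 1$, hence $\kappa = 1$, contradicting $\kappa \geq 2$. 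Therefore $u$ is simple.

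Regularity then follows from the standard transversality framework recalled in \S\ref{subsubsec:regularity}: a simple asymptotically cylindrical $J_X$-holomorphic curve is regular for generic admissible $J_X$. The additional local tangency constraint $\lll \T^{n-1} p\rrr$ is incorporated without difficulty, either by the direct approach or, as recalled in \S\ref{subsec:obs_higher}, by replacing the point with a sufficiently skinny ellipsoid and working entirely within the standard framework of asymptotically cylindrical simple curves.

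The main obstacle is really nothing more than bookkeeping the combinatorics of the branched cover; the essential point is the primitivity of the orbits $\beta_j$, which rigidifies the branching at punctures, combined with the $H_1$-obstruction coming from the structure of $H_1(X_{\vec{d}}^{2n})$. There is no genuine analytic difficulty, since no virtual perturbation is required once the curve is known to be simple.
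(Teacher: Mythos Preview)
Your argument is correct and follows essentially the same strategy as the paper: both derive a contradiction from the first homology constraint $\sum \vec{w}_i = q\vec{d}$ on the underlying simple curve $\ovl{u}$, together with the covering relation forcing $\kappa q = 1$. The only difference is that you first use primitivity of the $\beta_j$ to identify the asymptotics of $\ovl{u}$ explicitly as $\beta_1^{\times d_1/\kappa},\dots,\beta_k^{\times d_k/\kappa}$, whereas the paper skips this step and works directly at the level of the $\vec{v}$-vectors: since $\sum_i \vec{v}_i = \kappa \sum_j \vec{w}_j$ regardless of branching at the punctures, one immediately gets $\vec{d} = \kappa q \vec{d}$ without ever needing to know that $\ovl{u}$'s ends are themselves $\beta_j$'s. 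Your primitivity step is correct but not needed; the paper's version is slightly more streamlined and would apply even if the asymptotics of $u$ were not all simple orbits.
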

\begin{proof}
Suppose by contradiction that $u$ is a $\kappa$-fold cover of its underlying simple curve $\ovl{u}$ for some $\kappa \geq 2$. 
Let $\gamma^{B_1}_{\vec{w}_1},\dots,\gamma^{B_{\ovl{l}}}_{\vec{w}_{\ovl{l}}}$ denote positive ends of $\ovl{u}$.
Since we have $\sum_{i=1}^{\ovl{l}}[\gamma^{B_i}_{\vec{w}_i}] = 0 \in H_1(X_{\vec{d}}^{2n})$, we must have $\sum_{i=1}^{\ovl{l}} \vec{w}_i = q\vec{d}$ for some $q \in \Z_{\geq 1}$. However, we then must have $\kappa q\vec{d} = \sum_{i=1}^k d_i\vec{e}_i = \vec{d}$, which is not possible unless $\kappa = 1$.
\end{proof}

The following proposition is roughly the geometric analogue of Lemma~\ref{lem:bar_cycle}.
Let $J_{\R \times \bdy X}$ be a fixed generic cylindrical almost complex structure on the symplectization $\R \times \bdy X_{\vec{d}}^{2n}$,
and let $\calJ_{X}$ denote the space of all admissible almost complex structures on $\wh{X}_{\vec{d}}^{2n}$ which agree with $J_{\R \times \bdy X}$ on a neighborhood of the cylindrical end.
\begin{prop}\label{prop:cnt_fin_and_well_def}
For generic $J_X \in \calJ_X$, 
the signed count 
of (genus zero) $J_X$-holomorphic curves in $X_{\vec{d}}^{2n}$ with positive asymptotics $\beta_1^{\times d_1},\dots,\beta_k^{\times d_k}$ and satisfying the constraint $\lll \T^{n-1}p\rrr$ is finite, nonzero, and independent of $J_X$.
\end{prop}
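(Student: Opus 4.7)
The plan is to establish finiteness, nonvanishing, and invariance in turn by combining the index and homological bookkeeping of the preceding lemmas with a neck-stretching argument in $\CP^n$. For \emph{finiteness}, set $\calM := \calM^{J_X}_{X_{\vec{d}}^{2n}}(\beta_1^{\times d_1},\dots,\beta_k^{\times d_k})\lll \T^{n-1}p\rrr$. A direct index computation using $\cz_{\triv}(\beta_i)=n-3$ and $c_1^{\triv}(u)=n+1$ (obtained by capping off each positive end with its small spanning disk to recover a line in $\CP^n$) yields $\ind(u)=0$ for every $u\in\calM$. By Lemma~\ref{lem:main_curves_are_simple} every such $u$ is simple, hence regular for generic $J_X$, so $\calM$ is a smooth oriented $0$-manifold. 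To upgrade this to finiteness I must show that the SFT compactification $\ovl{\calM}$ admits no proper multilevel buildings; such a building has a bottom level in $X^{2n}_{\vec{d}}$ containing a main constrained component $u$ with $\ind(u)\geq 0$ (Lemma~\ref{lem:main_cpnt_nonneg_ind}), possibly unconstrained auxiliary components, and one or more levels in $\R\times\bdy X^{2n}_{\vec{d}}$ whose nontrivial simple components have index $\geq 1$ modulo $\R$-translation. A case analysis parallel to the last paragraph of the proof of Lemma~\ref{lem:bar_cycle} -- using action bounds coming from the wrapping vector, the description of $H_1(X^{2n}_{\vec{d}})$, and the positivity-of-intersection constraints of Lemma~\ref{lem:formal_caps} -- forces every nontrivial symplectization component to be an index-one cylinder asymptotic to some $\beta_i$, and the signed count of such cylinders was shown there to vanish by relating it to the count of index-zero planes in $N^{2n}_{\vec{d}}$. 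Hence $\ovl{\calM}=\calM$ is finite.

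For \emph{nonvanishing}, apply the neck stretching of Lemma~\ref{lem:cpn_ns_result} to $\calM_{\CP^n,[L]}\lll \T^{n-1}p\rrr$. The count at $t=0$ equals the Gromov--Witten number $\gw_{\CP^n,[L]}\lll \T^{n-1}p\rrr$, which counts lines through $p$ tangent to order $n-1$ at $p$ to a generic local divisor -- a transverse and nonempty count. The parametrized moduli space is a $1$-dimensional oriented cobordism whose $t=1$ boundary consists precisely of the two-level buildings of Lemma~\ref{lem:cpn_ns_result}; applying SFT gluing at the simple orbits $\beta_i$, the $t=1$ signed count equals a nonzero rational multiple of
$$(\#\calM)\cdot\prod_{j=1}^k(\#N_j)^{d_j},$$
where $N_j$ denotes the count of index-zero planes in $N^{2n}_{\vec{d}}$ with negative asymptotic $\beta_j$, each of which is nonzero by the last paragraph of the proof of Lemma~\ref{lem:bar_cycle}. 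Since the $t=0$ count is nonzero, so is the $t=1$ count, forcing $\#\calM\neq 0$.

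Finally, \emph{independence of $J_X$} follows from the standard cobordism argument: given generic $J_X^0,J_X^1\in\calJ_X$, choose a generic path $\{J_X^t\}$ and consider the $1$-dimensional parametrized moduli space, an oriented cobordism from $\calM(J_X^0)$ to $\calM(J_X^1)$ whose remaining boundary corresponds to codimension-one breakings; the compactness analysis of the finiteness paragraph, applied parametrically (using that generic paths avoid simple curves of index $-1$), rules these out. The main obstacle throughout is this careful exclusion of codimension-one breakings, since although \S\ref{subsec:upper_bound} treats the top-level anchored planes in $N^{2n}_{\vec{d}}$, the present proposition lives entirely inside $X^{2n}_{\vec{d}}$, so the same action, index, and first-homology considerations that furnished the bar-complex cycle property of Lemma~\ref{lem:bar_cycle} must now be reinterpreted as geometric restrictions on actual breakings rather than as vanishings of algebraic counts.
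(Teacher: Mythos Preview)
Your nonvanishing argument via neck stretching is essentially the paper's, but your treatments of \emph{finiteness} and \emph{independence} contain a genuine gap and diverge from the paper's strategy.

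The gap is the inference ``the signed count of index-one cylinders vanishes, hence $\ovl{\calM}=\calM$.'' The vanishing established in Lemma~\ref{lem:bar_cycle} is an \emph{algebraic} statement: such cylinders may well exist in cancelling pairs, so broken configurations can still occur in $\ovl{\calM}$ and your compactness conclusion does not follow. To salvage a direct compactness argument you would instead need to show that every hypothetical breaking contains a \emph{simple} component of strictly negative index (hence is empty for generic $J_X$); this in turn requires controlling possible multiple covers in the symplectization and unconstrained bottom components, none of which you have addressed. The same issue reappears in your independence argument: in the parametrized $1$-dimensional moduli space, the ``codimension-one breakings'' you need to exclude are exactly the ones whose algebraic count vanishes but whose geometric presence you have not ruled out.

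The paper sidesteps this entirely. It never analyzes $\ovl{\calM}$ inside $X_{\vec{d}}^{2n}$. Instead, it works in the \emph{closed} manifold $\CP^n$, where the neck-stretching family $\ovl{\calM}_{\CP^n,[L]}^{\{J_t\}}\lll\T^{n-1}p\rrr$ is a compact one-dimensional cobordism (Lemma~\ref{lem:cpn_ns_result} guarantees all $t=1$ components are simple and regular, so gluing applies). Counting boundary points yields a product formula expressing $(n-1)!$ as $\#\calM$ times a product of plane counts in $N_{\vec{d}}^{2n}$. Finiteness and nonvanishing of $\#\calM$ are then read off \emph{algebraically} from this identity. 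For independence, one takes $J_X'\in\calJ_X$ (same behavior at infinity, hence same $J_N$), reruns the identical neck-stretching in $\CP^n$, and obtains the same product formula with the same $N$-plane factors; since $(n-1)!$ is fixed, $\#\calM$ is forced to agree. No parametrized moduli space interpolating $J_X$ and $J_X'$ inside $X$ is ever considered, so the breaking analysis you attempt is simply not needed.
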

\begin{remark}
Since $J_{\R \times \bdy X}$ is arbitrary, it follows that the count in Proposition~\ref{prop:cnt_fin_and_well_def} is nonzero for any choice of 
generic admissible almost complex structure $J_X$ on $\wh{X}$, i.e. not necessarily with fixed behavior at infinity.
With a bit more work, it is also possible to show that this count is entirely independent of this choice, but since we will not explicitly need this we omit the proof for brevity.
\end{remark}

\begin{proof}[Proof of Proposition~\ref{prop:cnt_fin_and_well_def}]
As before, let $J$ be a generic compatible almost complex structure on $\CP^n$ which agrees with $J_{\R \times \bdy X}$ on $\Op(\bdy X_{\vec{d}}^{2n})$ and restricts to $J_X$ and $J_N$ on $X_{\vec{d}}^{2n}$ and $N_{\vec{d}}^{2n}$ respectively, and let $\{J_t\}_{t \in [0,1)}$ be a family of almost complex structures on $\CP^n$ with $J_0 = J$ which realizes the neck stretching along $\bdy X_{\vec{d}}^{2n}$.
According to \cite[Prop. 2.2.2]{McDuffSiegel_counting}, the count $\gw_{\CP^n,[L]}\lll \T^{n-1}p\rrr = \# \calM^{J,\simp}_{\CP^n,[L]}\lll \T^{n-1}p\rrr$ is independent of $J$ (provided that it is generic), and in fact by \cite[Prop. 3.4]{CM2} we have $\gw_{\CP^n,[L]}\lll \T^{n-1}p\rrr = (n-1)! \neq 0$.
Now consider the compactified moduli space $\ovl{\calM}_{\CP^n,[L]}^{\{J_t\}}\lll \T^{n-1}p\rrr$, and let $\pi$ denote its natural projection to $[0,1]$.
The fiber $\pi^{-1}(0)$ coincides with $\calM_{\CP^n,[L]}^{J,\simp}\lll\T^{n-1}p\rrr$, 
whereas according to Lemma~\ref{lem:cpn_ns_result} the fiber $\pi^{-1}(1)$ consists of two-level pseudoholomorphic buildings with regular components.
 In particular, by standard gluing along cylindrical ends (see e.g. \cite[Thm. 2.54]{pardon2019contact}), $\ovl{\calM}_{\CP^n,[L]}^{\{J_t\}}\lll \T^{n-1}p\rrr$ defines a one-dimensional oriented topological cobordism, at least after restricting the family to $[1-\delta,1)$ for $\delta > 0$ sufficiently small.
Using again \cite[Prop. 2.2.2]{McDuffSiegel_counting}, the counts $\# \pi^{-1}(0)$ and $\# \pi^{-1}(1-\delta)$ coincide, and hence by counting signed boundary points we obtain the relation
  \begin{align*}\label{eqn:neck_stretch}\tag{*}
(n-1)! = \#\calM^{J_X}_{X_{\vec{d}}^{2n}}(\beta_1^{\times d_1},\dots,\beta_k^{\times d_k})\lll \T^{n-1}p \rrr \cdot \prod_{i=1}^k \left(\#\calM^{J_N}_{N_{\vec{d}}^{2n},[c_i]}(\beta_i)\right)^{d_i}.
\end{align*}
In particular, we have $\#\calM^{J_X}_{X_{\vec{d}}^{2n}}(\beta_1^{\times d_1},\dots,\beta_k^{\times d_k})\lll \T^{n-1} p\rrr \neq 0$, as well as 
${\#\calM^{J_N}_{N_{\vec{d}}^{2n},[c_i]}(\beta_i) \neq 0}$ for $i = 1,\dots,k$, 
and all of these counts must be finite.

Finally, suppose that we have another generic admissible almost complex structure $J_X'$ which coincides with $J_X$ on a neighborhood of the cylindrical end of $\wh{X}_{\vec{d}}^{2n}$.
Let $J'$ be the compatible almost complex structure on $\CP^n$ which restricts to $J_{X'}$ and $J_N$ on $X_{\vec{d}}^{2n}$ and $N_{\vec{d}}^{2n}$ respectively.
Since we have also 
\begin{align}
\# \calM^{J',\simp}_{\CP^n,[L]}\lll \T^{n-1}p\rrr = \gw_{\CP^n,[L]}\lll \T^{n-1}p\rrr,
\end{align}
 by comparing \eqref{eqn:neck_stretch} with the analogous relation using $J'$ instead of $J$, we must have
\begin{align}
\#\calM^{J_X}_{X_{\vec{d}}^{2n}}(\beta_1^{\times d_1},\dots,\beta_k^{\times d_k})\lll \T^{n-1}p  \rrr = \#\calM^{J_X'}_{X_{\vec{d}}^{2n}}(\beta_1^{\times d_1},\dots,\beta_k^{\times d_k})\lll \T^{n-1} p\rrr.
\end{align}

\end{proof}

\subsection{Completing the obstructions proof}\label{subsec:completing}

We now complete the proof of Theorem~\ref{thm:main_combinatorial}.
Fix $n \in \Z_{\geq 1}$ and tuples of positive integers $\vec{d}=(d_1,\dots,d_k) \in \calS$ and $\vec{d'} = (d_1',\dots,d_{k'}') \in \calS$ with $\sum_{i=1}^k d_i, \sum_{i=1}^{k'}d_i' \geq n+1$, put $X := X_{\vec{d}}^{2n}$ and $X' := X_{\vec{d'}}^{2n}$, 
and consider a hypothetical Liouville embedding $\iota: X \es X'$. 
Let $\la$ and $\la'$ denote the preferred Liouville one-forms on $X$ and $X'$ respectively provided by Theorem~\ref{thm:div_compl1}. 
By Lemma~\ref{lem:liouville_emb_lemmas} (b), after applying a Liouville homotopy to $X'$ we can assume that we have $\iota^*\la' = \la$.
Note that since this homotopy only modifies the contact form on $\bdy X'$ by a positive scaling factor, the Reeb dynamics are unaffected (expect for possibly rescaling the periods of all Reeb orbits by a fixed constant), so all of our results about the geometry of $X'$ still apply.

Let $J'$ be a generic admissible almost complex structure on the symplectic completion of $X'$.  After deforming $J'$, we can assume that $\iota^*(J')$ is the restriction of a generic admissible almost complex structure $J$ on the symplectic completion of $X$. In particular, $J'$ is cylindrical near $Y := \iota(\bdy X)$.
Now let $\{J_t'\}_{t \in [0,1)} \in \calJ_{X'}$ be a corresponding neck stretching family of admissible complex structures on the symplectic completion of $X$, with $J_0' = J'$ and $J_t'$ limiting as $t \rightarrow 1$ to a broken almost complex structure on $X \circledcirc (X' \setminus \iota(X))$.
We consider the compactification $\ovl{\calM}^{\{J_t'\}}_{X'}\lll \T^{n-1}p\rrr(\beta_1^{\times d_1'},\dots,\beta_k^{\times d_k'})$ provided by the SFT compactness theorem, and let $\pi$ denote the natural projection to $[0,1]$.
\begin{lemma}
In the situation above, the fiber $\pi^{-1}(1)$ is nonempty.
\end{lemma}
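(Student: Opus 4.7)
The plan is to combine the invariance of the signed curve count from Proposition~\ref{prop:cnt_fin_and_well_def} (applied to the target $X' = X^{2n}_{\vec{d'}}$) with SFT compactness applied to the parametrized moduli space. First I would observe that the family $\{J_t'\}_{t \in [0,1)}$ lies entirely in $\calJ_{X'}$: the neck stretching is performed along $Y = \iota(\bdy X)$, which sits in the interior of $X'$, so each $J_t'$ agrees with the fixed cylindrical almost complex structure $J_{\R\times\bdy X'}$ near the cylindrical end of $\wh{X'}$. After a small perturbation of $\{J_t'\}$ supported in a compact subset of $(0,1)$ (so that the broken limit at $t = 1$ is unaffected), I can arrange that $J_t'$ is generic within $\calJ_{X'}$ for $t$ in a dense subset $T \subset [0,1)$.

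Next, for each $t \in T$, Proposition~\ref{prop:cnt_fin_and_well_def} applied to $X'$ tells us that the signed count $\#\pi^{-1}(t)$ equals a fixed positive integer $N > 0$ (independent of $t$). In particular $\pi^{-1}(t) \neq \nil$ for each $t \in T$. Taking a sequence $t_n \to 1$ with $t_n \in T$ and picking $u_n \in \pi^{-1}(t_n)$, the SFT compactness theorem (as recalled in \S\ref{subsubsec:neck_stretching}) guarantees that the parametrized compactification
$$\ovl{\calM}^{\{J_t'\}}_{X'}\lll \T^{n-1}p\rrr(\beta_1^{\times d_1'},\dots,\beta_k^{\times d_k'})$$
is sequentially compact, so after passing to a subsequence the pairs $(t_n,u_n)$ converge to a limit configuration $(1,u_\infty)$ in the fiber $\pi^{-1}(1)$. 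This witnesses the desired nonemptiness.

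The main obstacle is the genericity step: to invoke Proposition~\ref{prop:cnt_fin_and_well_def} at intermediate values of $t$, one has to perturb the neck-stretching family so that $J_t'$ is a generic point of $\calJ_{X'}$ for a dense set of times, while keeping the family in $\calJ_{X'}$ (i.e., fixing the behavior near $\bdy X'$) and without disturbing the broken limit at $t=1$. This is standard but essential. Once genericity is in hand, the rest is purely bookkeeping: the invariance of the count across $t \in T$ rules out the possibility that all index-zero curves disappear as $t \to 1$, and SFT compactness promotes any sequence of such curves to an honest broken limit in $\pi^{-1}(1)$.
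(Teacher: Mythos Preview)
Your proof is correct and follows the same overall strategy as the paper: use Proposition~\ref{prop:cnt_fin_and_well_def} to show $\pi^{-1}(t)\neq\nil$ for $t$ approaching $1$, then invoke SFT compactness. The only difference is that you insert a perturbation step to make $J_t'$ generic along a dense set of times, whereas the paper bypasses this by observing that if $\pi^{-1}(t)$ were empty for some $t\in[0,1)$, the moduli space would be vacuously regular, so the signed count would be zero---contradicting the nonzero invariant of Proposition~\ref{prop:cnt_fin_and_well_def}. Your route is slightly more laborious but avoids the (mild) implicit step of extending the invariance statement from ``generic $J_X$'' to ``any $J_X$ for which the moduli space happens to be regular''; the paper's route is shorter but relies on that standard extension.
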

\begin{proof}
Observe that the fiber $\pi^{-1}(t)$ is nonempty for any $t \in [0,1)$.
Indeed, if $\pi^{-1}(t)$ were empty, then in particular the moduli space $\calM_{X'}^{J_t}\lll \T^{n-1}p\rrr(\beta_1^{\times d_1'},\dots,\beta_k^{\times d_k'})$ would be empty, and hence trivially regular, contradicting Proposition~\ref{prop:cnt_fin_and_well_def}.
It follows then by compactness of $\ovl{\calM}^{\{J_t'\}}_{X'}\lll \T^{n-1}p\rrr(\beta_1^{\times d_1'},\dots,\beta_k^{\times d_k'})$ that $\pi^{-1}(1)$ is nonempty.
\end{proof}

We now consider a configuration in $\pi^{-1}(1)$ and use its existence to read off various consequences.  A priori, we have a pseudoholomorphic building with
\begin{itemize}
	\item some number (possibly zero) of levels in the symplectization $\R \times \bdy X'$
	\item a level in the cobordism $X' \setminus \iota(X)$
	\item some number (possibly zero) of levels in the symplectization $\R \times \bdy X$
	\item bottom level in the domain $X$ consisting of one ``main'' component inheriting the constraint $\lll \T^{n-1}p\rrr$, along with some number (possibly zero) of unconstrained components.
\end{itemize}
By formally gluing together all pairs of ends except for those corresponding to the positive ends of the main component in the bottom level, we arrive at the following simplified picture:
\begin{itemize}
	\item a level in the cobordism $X' \setminus \iota(X)$ consisting of some number $l \geq 1$ of formal components, anchored in $X$
	\item bottom level consisting of a main component $u$ with $l$ positive ends and satisfying the constraint $\lll \T^{n-1}p\rrr$.
\end{itemize}
Note that main component is pseudoholomorphic with respect to the generic admissible almost complex structure $J$ on $X$, and so by Lemma~\ref{lem:main_cpnt_nonneg_ind} we must have $\ind(u) \geq 0$. Therefore, by Lemma~\ref{lem:too_few_ends} we must have $l \geq \sum_{i=1}^kd_i$.
Since each component in the cobordism level has at least one positive end, we also must have $l \leq \sum_{i=1}^{k'}d_i'$.

Let $\gamma_{\vec{x}_1}^{A_1},\dots,\gamma_{\vec{x}_l}^{A_l}$ be the positive ends of $u$.
Then for $i = 1,\dots,l$, the $i$th formal component in the cobordism level has negative end $\gamma_{\vec{x}_i}^{A_i}$, and its positive ends form a nonempty subcollection of $\underbrace{\beta_1,\dots,\beta_1}_{d_1},\dots,\underbrace{\beta_k,\dots,\beta_k}_{d_k}$. Each of these positive ends corresponds to one of the unit basis vectors $e_1,\dots,e_k$. Let $\vec{y}_i \in \Z_{\geq 0}^{k'} \setminus \{\vec{0}\}$ denote the sum of these unit basis vectors.
Note that by construction we have $\sum_{i=1}^l \vec{y}_i = \vec{d'}$.

We now consider the map $\iota_*: H_1(X) \rightarrow H_1(X')$ induced by the Liouville embedding $\iota: X \hookrightarrow X'$.
Under the identification from \S\ref{subsec:hypersurf_compl}, this is naturally viewed as a group homomorphism $\Phi: \Z^k/(\vec{d}) \rightarrow \Z^{k'}/(\vec{d'})$, and as such it sends $\vec{x}_i \text{ mod } (\vec{d})$ to $\vec{y}_i\text{ mod }(\vec{d'})$ for $i = 1,\dots,l$.
Since $u$ provides a nulhomology of $\sum_{i=1}^l[\gamma_{\vec{x}_i}^{A_i}] \in H_1(X)$, we must have $\sum_{i=1}^l\vec{x}_i = q\vec{d}$ for some $q \in \Z_{\geq 1}$.

Finally, nonnegativity of the index of $u$ translates into
\begin{align*}
0 &\leq (n-3)(2-l) - (4n-4) + \sum_{i=1}^l \cz_{\triv}(\gamma_{\vec{x}_i}^{A_i}) + 2c_1^{\triv}(u)\\
&= (n-3)(2-l) - (4n-4) + \sum_{i=1}^l \left( \delta(\gamma_{\vec{x}_i}^{A_i}) - 2\vec{x}_i \cdot \vec{1} \right) + 2q(n+1)\\
&\leq (n-3)(2-l) - (4n-4) + l(n-1) - 2q\sum_{i=1}^k d_i + 2q(n+1)\\
&= -2n + 2l - 2 - 2q(\sum_{i=1}^k d_i - n -1),
\end{align*}
i.e. $q(\sum_{i=1}^k d_i - n - 1) \leq l - n - 1.$
This completes the proof of Theorem~\ref{thm:main_combinatorial}.

\begin{remark}
In the above neck stretching argument, it is not too difficult to show that any configuration in $\pi^{-1}(t)$ with $t \in [0,1)$ consists entirely of simple components, and hence can be assumed to be regular, meaning that $\pi^{-1}([0,1))$ is a one-dimensional topological manifold with boundary. 
However, we did not show (or require) that the configurations in $\pi^{-1}(1)$ are transversely cut out, and multiply covered components in the cobordism $X' \setminus \iota(X)$ could be in principle appear. 
Transversality for the whole compactification $\ovl{\calM}^{\{J_t'\}}_{X'}\lll \T^{n-1}p\rrr(\beta_1^{\times d_1'},\dots,\beta_k^{\times d_k'})$ should follow by adapting the Cieliebak--Mohnke framework \cite{CM1,CM2} or a more general virtual perturbation frameworks (c.f. Remark~\ref{rmk:virtual}), leading to a slight strengthening of Theorem~\ref{thm:main_combinatorial}.
\end{remark}

\section{Constructions}\label{sec:constructions}

\subsection{Weinstein cobordisms from degenerations}

In this subsection we prove Theorem~\ref{thm:embeddings} based on the idea that in a degenerating family of divisors there is a Weinstein cobordism from the complement of the special fiber to the complement of the general fiber.
We then complete the proof of Theorem~\ref{thm:main_liouville}.

\begin{proof}[Proof of Theorem~\ref{thm:embeddings}]
Since Weinstein cobordisms can be concatenated, it suffices to consider the case that $\vec{d'}$ is obtained from $\vec{d}$ by either a combination move or a duplication move.
In the former case, put $\vec{d} = (d_1,\dots,d_k)$ and $\vec{d'} = (d_1,\dots,d_{k-2},d_{k-1}+d_{k})$ without loss of generality, and let $D_t$, $t \in [0,1]$, be a smooth family of simple normal crossing divisors in $\CP^n$ such that
\begin{itemize}
\item for $t > 0$, $D_t$ has $k-1$ irreducible components of degrees $d_1,\dots,d_{k-2},d_{k-1}+d_k$ respectively
\item $D_0$ has $k$ irreducible components of degrees $d_1,\dots,d_{k-1},d_k$ respectively.
\end{itemize}
Namely, the last component, which is a smooth hypersurface of degree $d_{k-1}+d_k$, degenerates into a union of two hypersurfaces of degrees $d_{k-1}$ and $d_k$.
Put $\calL := \mathcal{O}(\sum_{i=1}^k d_i)$. Correspondingly, we can find a smooth family of holomorphic sections $\sigma_t \in H^0(\CP^n;\calL)$, i.e. degree $\sum_{i=1}^kd_i$ homogeneous polynomials in $\C[X_0,\dots,X_n]$,
such that $D_t = \sigma_t^{-1}(0)$ for $t \in [0,1]$.
Then the existence of a Weinstein embedding of $X_{\vec{d}}^{2n}$ into $X_{\vec{d'}}^{2n}$ follows from Proposition~\ref{prop:cob_gen_proj_var} below.

Similarly, suppose now that $\vec{d'}$ differs from $\vec{d}$ by a duplication move, and put $\vec{d} = (d_1,\dots,d_k)$ and $\vec{d'} = (d_1,\dots,d_k,d_k)$ without loss of generality.
Let $D_t$, $t \in [0,1]$, be a smooth family of simple normal crossing divisors in $\CP^n$ such that
\begin{itemize}
\item for $t > 0$, $D_t$ has $k+1$ irreducible components of degrees $d_1,\dots,d_k,d_k$ respectively
\item $D_0$ has $k$ irreducible components of degrees $d_1,\dots,d_k$ respectively.
\end{itemize}
Note here that, by \cite[Lem. 4.4]{Seidel_biased_view}, considering a divisor component with multiplicity does not change the Weinstein structure on the complement up to Weinstein homotopy.
Namely, the last two components, which are smooth hypersurfaces of degree $d_k$, degenerate into a single hypersurface of degree $d_k$ (but with multiplicity two).
Put $\calL := \mathcal{O}(\sum_{i=1}^k d_i + d_k)$,
and let  $\sigma_t \in H^0(\CP^n;\calL)$ be smooth family of holomorphic sections such that $D_t = \sigma_t^{-1}(0)$ for $t \in [0,1]$.
Then again the existence of a Weinstein embedding of $X_{\vec{d}}^{2n}$ into $X_{\vec{d'}}^{2n}$ follows from Proposition~\ref{prop:cob_gen_proj_var} below.

\end{proof}

Recall from \S\ref{subsec:div_compl} that if $M$ is a smooth complex projective variety and $D \subset M$ is an ample simple normal crossing divisor, then $M \setminus \Op(D)$ is canonically a Weinstein domain up to Weinstein deformation equivalence.

\begin{prop}\label{prop:cob_gen_proj_var}
Let $M$ be a smooth complex projective variety, let $\calL \rightarrow M$ be an ample line bundle,
and let $\sigma_t \in H^0(M;\calL)$, $t \in [0,1]$ be a smooth family of holomorphic sections such that $D_t := \sigma_t^{-1}(0)$ is a simple normal crossing divisor for each $t \in [0,1]$.
Then for $\delta > 0$ sufficiently small, there is a Weinstein embedding of $M \setminus \Op(D_0)$ into $M \setminus \Op(D_\delta)$.
\end{prop}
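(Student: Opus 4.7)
The plan is to realize $M \setminus \Op(D_0)$ and $M \setminus \Op(D_t)$ as sublevel sets of the plurisubharmonic exhaustion functions $\phi_s := -\log \|\sigma_s\|$ (for a fixed positive Hermitian metric on $\calL$), and then to exhibit the embedding by constructing a Weinstein homotopy that interpolates between these two plurisubharmonic functions on a common sublevel domain.

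Concretely, I would first fix a Hermitian metric $\|\cdot\|$ on $\calL$ with positive curvature, so that each $\phi_s := -\log\|\sigma_s\|$ is a strictly plurisubharmonic exhausting function on $M \setminus D_s$. After a small strictly plurisubharmonic perturbation we may assume $\phi_0$ is Morse; pick $C \in \R$ a regular value above all critical values of $\phi_0$, and set $W_0 := \{\phi_0 \leq C\}$. Then $(W_0, -d^\C\phi_0, \phi_0)$ is a Weinstein domain representing $M \setminus \Op(D_0)$ up to Weinstein deformation equivalence. For $t_0 > 0$ small, compactness of $W_0$ together with $\sigma_t \to \sigma_0$ in $C^\infty(M,\calL)$ ensures that for $t \in [0, t_0]$ we have $W_0 \cap D_t = \nil$, $\phi_t$ is strictly plurisubharmonic and $C^\infty$-close to $\phi_0$ on $W_0$, and (after a small Morse perturbation) some sublevel set $W_t := \{\phi_t \leq C'\}$ of $\phi_t$ contains $W_0$ in its interior and represents $M \setminus \Op(D_t)$.

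The main step is to produce the Weinstein embedding $W_0 \ws W_t$. Fix $\delta > 0$ small and a cutoff function $\chi: W_0 \to [0,1]$ equal to $1$ on $\{\phi_0 \leq C - 2\delta\}$ and to $0$ on a neighborhood of $\partial W_0$. For $\tau \in [0,1]$ set
\[
\wt{\phi}_\tau := \phi_0 + \tau\, \chi \cdot (\phi_t - \phi_0) \text{ on } W_0.
\]
Since strict plurisubharmonicity is $C^2$-open and the correction $\chi(\phi_t - \phi_0)$ is $C^2$-small for $t$ small (the $C^2$-norm of $\chi$ being fixed), each $\wt{\phi}_\tau$ is strictly plurisubharmonic on $W_0$. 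Moreover $\wt{\phi}_\tau$ coincides with $\phi_0$ near $\partial W_0$, so $\partial W_0 = \{\wt\phi_\tau = C\}$ is a regular level set throughout the family. After a generic smoothing we may assume $\wt{\phi}_\tau$ is generalized Morse across $\tau$ (allowing isolated birth-death degenerations), yielding a Weinstein homotopy $(W_0, -d^\C\wt\phi_\tau, \wt\phi_\tau)_{\tau \in [0,1]}$. At $\tau = 1$, one has $\wt\phi_1 = \phi_t$ on $\{\phi_0 \leq C - 2\delta\}$.

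To close, I would apply a symmetric cutoff construction to $W_t$: modify $\phi_t$ on $W_t$ in a neighborhood of $\partial W_0$ so that the new function $\wt{\phi}$ equals $\wt\phi_1$ on $W_0$ (in particular equals $\phi_0 = C$ on $\partial W_0$) and coincides with $\phi_t$ outside some larger neighborhood of $\partial W_0$ in $W_t$. The same $C^2$-openness argument shows $\wt\phi$ is strictly plurisubharmonic on $W_t$ and interpolating gives a Weinstein homotopy of $W_t$ after which $W_0 = \{\wt\phi \leq C\}$ is a regular sublevel set and the pulled-back Weinstein structure on $W_0$ agrees with the one from the end of the first homotopy. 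The inclusion $W_0 \hookrightarrow W_t$ is then a strict Weinstein embedding of the homotoped structures, which is a Weinstein embedding by definition.

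The main obstacle is ensuring that the interpolation $\wt{\phi}_\tau$ simultaneously preserves (a) strict plurisubharmonicity, (b) the regular level set structure at $\partial W_0$, and (c) the generalized Morse condition across the entire homotopy, and analogously for the symmetric construction on $W_t$. All three hinge on the $C^2$-smallness of $\phi_t - \phi_0$ on $W_0$, which requires a careful choice of $t_0$ depending on the $C^2$-norm of the chosen cutoff $\chi$ and a generic choice of intermediate perturbations to control birth-death singularities.
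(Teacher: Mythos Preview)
Your approach is correct, but it is more elaborate than necessary compared with the paper's argument. The paper avoids all cutoff interpolations by letting the \emph{domain} vary rather than fixing the domain and varying the function. Concretely, after choosing $R$ above the critical values of $\phi_0$, the paper picks $\delta>0$ small enough that $R$ remains a regular value of $\phi_t$ for all $t\in[0,\delta]$. Then the moving family of sublevel sets $W_t:=\{\phi_t\le R\}$, each equipped with $(-d^{\C}\phi_t,\phi_t)$, is itself a Weinstein homotopy (after trivializing by an ambient isotopy), so $W_0$ is Weinstein deformation equivalent to $W_\delta=\{\phi_\delta\le R\}$. The latter is already a sublevel set of $\phi_\delta$, hence a strict Weinstein subdomain of $W':=\{\phi_\delta\le S\}$ for $S$ large, and $W'$ represents $M\setminus\Op(D_\delta)$. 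This yields the Weinstein embedding directly.

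Your route---fixing $W_0$ and interpolating $\phi_0$ to $\phi_t$ via $\wt\phi_\tau=\phi_0+\tau\chi(\phi_t-\phi_0)$, then symmetrically modifying $\phi_t$ on $W_t$---achieves the same end, and the $C^2$-openness of strict plurisubharmonicity does make the details go through. What the paper's approach buys is that no cutoffs, no $C^2$-smallness estimates depending on $\|\chi\|_{C^2}$, and no second ``symmetric'' construction are needed: the single observation that $R$ stays regular along the family suffices. Your approach, on the other hand, would generalize more readily to situations where one has two nearby plurisubharmonic functions not arising from a common one-parameter family.
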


\begin{proof}
Pick a Hermitian metric $\langle -,-\rangle$ on $\calL$, with associated norm $||-||$.
For $t \in [0,1]$, put $\phi_t := -\log ||\sigma_t||$.
As in \S\ref{subsec:div_compl}, the function $\phi_t: M \setminus D_t \rightarrow \R$ is exhausting and strictly plurisubharmonic for all $t \in [0,1]$, with critical points contained in a compact subset.
After a small perturbation, we can further assume that $\phi_t$ is generalized Morse for all $t \in [0,1]$. 
Put $N := \phi_0^{-1}((R,\infty)) \cup D_0$ for $R$ sufficiently large,
 so that we have $D_0 \subset N$ and all of the critical points of $\phi_0$ lie in $M \setminus \ovl{N}$.
 Pick $\delta > 0$ sufficiently small such that $R$ is a regular value of $\phi_t$ for all $t \in [0,\delta]$. 
Put $U := \phi_\delta^{-1}((S,\infty)) \cup D_\delta$ for $S$ sufficiently large, so that we have $D_\delta \subset U \subset N$ and none of the critical points of $\phi_\delta$ lie in $\ovl{U}$.
Put $W_t := \phi_t^{-1}((-\infty,R])$ for $t \in [0,\delta]$ and $W' := \phi_\delta^{-1}((-\infty,S])$.
Then $(W_0,-d^{\C}\phi_0|_{W_0},\phi_0|_{W_0})$ and $(W',-d^{\C}\phi_\delta|_{W'},\phi_{\delta}|_{W'})$ are Weinstein domains which represent the natural Weinstein structures on $M \setminus \Op(D_0)$ and $M \setminus \Op(D_\delta)$ respectively up to Weinstein deformation equivalence.
Moreover, the former is Weinstein deformation equivalent to the Weinstein subdomain of the latter corresponding to $\{\phi_\delta \leq R\}$. Indeed, the family of Weinstein domains $(W_t,-d^\C\phi_t|_{W_t},\phi_t|_{W_t})$ for $t \in [0,\delta]$ induces the desired Weinstein deformation equivalence.

\end{proof}

\sss

\begin{proof}[Proof of Theorem~\ref{thm:main_liouville}]
The ``if'' part is immediate from Theorem~\ref{thm:embeddings}, so it suffices to prove the ``only if'' statement,
i.e. given a Liouville embedding $X_{\vec{d}}^{2n} \es X_{\vec{d'}}^{2n}$ we must have $\vec{d} \leqq \vec{d'}$.
By Theorem~\ref{thm:main_combinatorial} we have 
\begin{align*}
q \leq \dfrac{l-n-1}{\sum_{i=1}^k d_i - n -1} \leq \dfrac{\sum_{i=1}^{k'}d_i' - n - 1}{\sum_{i=1}^k d_i - n - 1} < \dfrac{2 \sum_{i=1}^k d_i - 2n - 2}{\sum_{i=1}^k d_i - n - 1} = 2,
\end{align*}
and hence $q = 1$.
We therefore have
\begin{align*}
\vec{d} \cdot \vec{1} \leq l \leq \sum_{i=1}^l \vec{x}_i \cdot \vec{1} = \vec{d} \cdot \vec{1},
\end{align*}
which forces $l = \vec{d} \cdot \vec{1}$ and $\vec{x}_i \cdot \vec{1} = 1$ for $i = 1,\dots,l$.
Therefore, after possibly reordering, we can assume that we have
\begin{align*}
\vec{x}_1,\dots,\vec{x}_l = \underbrace{\vec{e}_1,\dots,\vec{e}_1}_{d_1},\dots,\underbrace{\vec{e}_k,\dots,\vec{e}_k}_{d_k}.
\end{align*}
Note that for two equal tuples $\vec{x}_i,\vec{x}_j$, the corresponding elements $\vec{y}_i,\vec{y}_j$ must have equal residues in $\Z^{k'}/(\vec{d'})$, and hence must simply be equal.
We therefore have
$$ d_1\vec{z}_1 + \dots + d_k\vec{z}_k = \vec{d'}$$
for some tuples $\vec{z}_1,\dots,\vec{z}_k \in \Z_{\geq 0}^{k'} \setminus \{\vec{0}\}$.
To see that $\vec{d} \leqq \vec{d'}$, observe that 
we have
\begin{align*}
(d_1,\dots,d_k) \leqq (\underbrace{d_1,\dots,d_1}_{\vec{z}_1 \cdot \vec{1}},\dots,\underbrace{d_k,\dots,d_k}_{\vec{z}_k\cdot\vec{1}}) \leqq (d_1',\dots,d_{k'}'),
\end{align*}
where the first inequality comes from iteratively applying duplication moves and the second inequality comes from iteratively applying combination moves.
\end{proof}

\subsection{Flexible constructions}

As already pointed out, the main obstructions in this paper are of an exact symplectic nature, i.e. they obstruct exact symplectic embeddings in situations where symplectic embeddings (and in particular formal symplectic embeddings) do exist.
Still, since the divisor complements $X_{\vec{d}}^{2n}$ have fairly nontrivial smooth topology, it is natural to wonder what role (if any) the topology plays.
It turns out that there is a fair amount of freedom to modify the diffeomorphism type without invalidating the obstructions, although first homology groups appear to play an essential role.

We begin with a definition:
\begin{definition}
Two Liouville domains $X,X'$ of the same dimension are {\em Liouville (resp. symplectic) embedding equivalent} if there is a Liouville (resp. symplectic) embedding of $X$ into $X'$ and of $X'$ into $X$.
\end{definition}
\NI For instance, if $X$ and $X'$ are Liouville domains which are Liouville embedding equivalent, and if $X''$ is a third Liouville domain, then we have $X \es X''$ if and only if $X' \es X''$.
In an attempt to remove all smooth topology from the discussion, it is natural to ask whether any Liouville domain $X$ is Liouville embedding equivalent to another domain which is diffeomorphic to the ball. This is easily seen to be false. 
Concretely, suppose by contradiction that $X_{n+1}^{2n} \cong D^*\mathbb{T}^n$ were Liouville embedding equivalent to a Liouville domain $Q^{2n}$ which is diffeomorphic to the ball.
Since $X_{n+1}^{2n}$ symplectically embeds into the standard Liouville $\C^n$, we also have $Q \shookrightarrow \C^n$, and hence $Q \es \C^n$ (since $H^1(Q;\R) = 0$), whence $X_{n+1}^{2n} \es \C^n$, which is a contradiction.

Now suppose $X$ is a Weinstein domain, and $X'$ is obtained from $X$ by Weinstein handle attachments.
Then we have $X \ws X'$, so by monotonicity we have $\G\lll \T^m p \rrr(X) \leq \G\lll \T^m p\rrr(X')$ and more generally $\I^{\leq l}(X) \supset \I^{\leq l}(X')$ for any $m \in \Z_{\geq 0}$ and $l \in \Z_{\geq 1}$.
In the special case that $X'$ differs from $X$ by subcritical or flexible Weinstein handle attachments, a well-known metaprinciple states that ``all'' pseudoholomorphic curve invariants of $X$ and $X'$ should coincide (see e.g. \cite{handleattachinginSH,fauck2016handle,bourgeois2012effect,murphy2018subflexible} for the case of symplectic cohomology).
Accordingly, we conjecture that $\I^{\leq l}(X)$ and in particular $\G\lll\T^m p \rrr$ are invariant under subcritical and flexible Weinstein handle attachments. 
In particular, this would give a large amount of freedom to change the smooth topology of $X$ without affecting these Liouville embedding obstructions (although one cannot kill the homology in the critical dimension $\tfrac{1}{2}\dim(X)$ by Weinstein handles, since these have index at most $\tfrac{1}{2}\dim(X)$).
Note that the above discussion shows that the Liouville embedding type is {\em not} generally invariant under subcritical handle attachments, since it is possible to kill the fundamental group by adding Weinstein two-handles.

Using somewhat more geometric considerations, we have:
\begin{prop}\label{cor:flex_constr}
Fix $n \geq 3$ and $N \in \Z_{\geq 1}$. For each $k \in \Z_{\geq 1}$ and $\vec{d} \in \Z_{\geq 1}^k$ with $\sum_{i=1}^k d_i \leq N$ and such that $1$ is an entry of $\vec{d}$, 
there is a Weinstein domain $\wt{X}_{\vec{d}}^{2n}$ such that
\begin{itemize}
\item
there is a Weinstein embedding $X_{\vec{d}}^{2n} \ws \wt{X}_{\vec{d}}^{2n}$ and a Liouville embedding $\wt{X}_{\vec{d}}^{2n} \es X_{\vec{d}}^{2n}$
\item
$\wt{X}_{\vec{d}}^{2n}$ is almost symplectomorphic to $\wt{X}_{\vec{d'}}^{2n}$ for any such $\vec{d},\vec{d'}$.
\end{itemize}
In particular, Theorem~\ref{thm:main_liouville} still holds if we replace each $X_{\vec{d}}^{2n}$ with the corresponding $\wt{X}_{\vec{d}}^{2n}$.
\end{prop}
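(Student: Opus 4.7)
The plan is to construct $\wt X_{\vec{d}}^{2n}$ by attaching flexible Weinstein handles to $X_{\vec{d}}^{2n}$, chosen uniformly in $\vec{d}$, and then to verify the two embeddings using flexibility. First I would exploit the hyperplane-entry assumption: after reordering we may take $d_k=1$, so one divisor component is a hyperplane $H$ and $X_{\vec{d}}^{2n}$ sits canonically as a Weinstein subdomain of $\CP^n \setminus \op{Op}(H) \cong \C^n$. The bound $\sum d_i \leq N$ ensures that $X_{\vec{d}}^{2n}$ Weinstein-embeds into a fixed round ball $B^{2n}_R \subset \C^n$ with $R=R(N,n)$ independent of $\vec{d}$, and that the smooth complement $B^{2n}_R \setminus X_{\vec{d}}^{2n}$ has homotopy type uniformly controlled by $N$ and $n$.

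Next, I would define $\wt X_{\vec{d}}^{2n}$ by attaching to $X_{\vec{d}}^{2n}$ a collection of subcritical and loose critical Weinstein handles, arranged (using Murphy's h-principle for loose Legendrians together with the Cieliebak--Eliashberg h-principle for flexible Weinstein structures, valid since $n \geq 3$) so that the resulting Weinstein domain is diffeomorphic to a fixed model $\wt Y^{2n}_N$ via a diffeomorphism carrying the almost-symplectic structure to a fixed one depending only on $N$ and $n$. The Weinstein embedding $X_{\vec{d}}^{2n} \ws \wt X_{\vec{d}}^{2n}$ and the almost-symplectomorphism statement between the various $\wt X_{\vec{d}}^{2n}$ are then both immediate from this construction.

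The main step is producing the reverse Liouville embedding $\wt X_{\vec{d}}^{2n} \es X_{\vec{d}}^{2n}$. Let $W_{\vec{d}} = \wt X_{\vec{d}}^{2n} \setminus \op{Int} X_{\vec{d}}^{2n}$ be the flexible Weinstein cobordism constituting the handle attachments. I would first use the negative Liouville flow inside $\wh X_{\vec{d}}^{2n}$ to shrink $X_{\vec{d}}^{2n}$ to a strictly smaller Liouville subdomain $X'_{\vec{d}} \subset \op{Int} X_{\vec{d}}^{2n}$, whose complementary collar is contactomorphic to a long piece of the symplectization of $\bdy X_{\vec{d}}^{2n}$. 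By the h-principle for flexible Weinstein cobordisms (in the spirit of the subflexibility framework of Murphy--Siegel), $W_{\vec{d}}$ can be realized up to Weinstein homotopy as an arbitrarily thin Weinstein cobordism attached on top of $X'_{\vec{d}}$ via loose isotopic copies of its original attaching isotropic/Legendrian spheres placed inside the collar. This squeezed copy fits inside the collar, producing a Weinstein embedding $X'_{\vec{d}} \cup W_{\vec{d}} \ws X_{\vec{d}}^{2n}$; since $X'_{\vec{d}} \cup W_{\vec{d}}$ is Weinstein homotopic to $\wt X_{\vec{d}}^{2n}$, this yields the desired Liouville embedding.

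The final clause of the proposition is then a formal consequence: any Liouville embedding $\wt X_{\vec{d}}^{2n} \es \wt X_{\vec{d'}}^{2n}$ composes with $X_{\vec{d}}^{2n} \ws \wt X_{\vec{d}}^{2n}$ and $\wt X_{\vec{d'}}^{2n} \es X_{\vec{d'}}^{2n}$ to yield $X_{\vec{d}}^{2n} \es X_{\vec{d'}}^{2n}$, to which the obstructive direction of Theorem~\ref{thm:main_liouville} applies verbatim, while Theorem~\ref{thm:embeddings} provides the constructive direction (upgraded through the Weinstein embeddings $X_{\vec{d}}^{2n} \ws \wt X_{\vec{d}}^{2n}$). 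The principal technical obstacle is the squeezing step: one must verify that the necessary loose attaching data exists in the collar uniformly in $\vec{d}$, which rests on the dimension hypothesis $n \geq 3$ and the fact that the attaching spheres of $W_{\vec{d}}$ lie in a fixed formal-isotopy class after the uniform identification provided by the construction of $\wt X_{\vec{d}}^{2n}$.
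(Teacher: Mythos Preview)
Your strategy matches the paper's in outline---build $\wt X_{\vec{d}}^{2n}$ by attaching a flexible Weinstein cobordism to $X_{\vec{d}}^{2n}$, then use an h-principle to produce the reverse Liouville embedding---but the execution differs in two notable ways.

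First, the paper's choice of the flexible cobordism is more concrete. Rather than constructing an abstract common model $\wt Y^{2n}_N$, the paper picks a single tuple $\vec{f}$ with $\vec{d} \leqq \vec{f}$ for every admissible $\vec{d}$ (this is precisely where the ``$1$ is an entry'' hypothesis enters: duplicating the $1$ and then combining lets one reach any such common $\vec{f}$), takes the Weinstein cobordism $W$ from $X_{\vec{d}}^{2n}$ to $X_{\vec{f}}^{2n}$ provided by Theorem~\ref{thm:embeddings}, and sets $\wt X_{\vec{d}}^{2n} := X_{\vec{d}}^{2n} \cup \flex(W)$. The almost-symplectomorphism between different $\wt X_{\vec{d}}^{2n}$ is then automatic, since flexibilization preserves the almost-symplectic type and each $\wt X_{\vec{d}}^{2n}$ is almost symplectomorphic to the single domain $X_{\vec{f}}^{2n}$. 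Your route through $X_{\vec{d}}^{2n} \subset \C^n$ and a fixed ball is plausible but leaves the choice of handles and the almost-symplectic identification less explicit, and uses the ``$1$ is an entry'' hypothesis for a different purpose than the paper does.

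Second, for the reverse Liouville embedding the paper exploits a specific geometric input you do not invoke: the symplectic (non-exact) embedding $X_{\vec{f}}^{2n} \shookrightarrow X_{\vec{d}}^{2n}$ given by adding back divisor components. Composing this with the Weinstein inclusion $X_{\vec{d}}^{2n} \subset X_{\vec{f}}^{2n}$ shows directly that the cobordism $W$ sits symplectically inside a collar of $\bdy X_{\vec{d}}^{2n}$, after which the Lagrangian caps h-principle of Eliashberg--Murphy is invoked to upgrade this to a Liouville embedding of $X_{\vec{d}}^{2n} \cup \flex(W)$. Your approach appeals instead to a general ``flexible cobordisms can be made arbitrarily thin'' principle; while results in this direction exist, the paper's use of the divisor geometry supplies the needed formal embedding data for free and avoids having to formulate such a principle precisely.
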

\NI The point here is that all of our Liouville embedding obstructions apply equally if we replace $X^{2n}_{\vec{d}}$ with $\wt{X}^{2n}_{\vec{d}}$ (since these are Liouville embedding equivalent), and yet the almost symplectomorphism type of $\wt{X}^{2n}_{\vec{d}}$ is independent of $\vec{d}$, meaning that these obstructions must be purely symplectic in nature (i.e. there are no smooth embedding obstructions).
\begin{proof}
Observe that we can find some (very large) unordered tuple $\vec{f} \in \calS$ (recall \S\ref{subsubsec:constructions}) such that $\vec{d} \leqq \vec{f}$ for each $\vec{d} \in \calS$ with $\sum_{i=1}^k d_i \leq N$ having $1$ as an entry.
More specifically, we assume that $\vec{f}$ contains each $\vec{d}$ with $\vec{d} \cdot \vec{1} \leq N$ which contains $1$ as an entry as a subtuple (up to reordering).
For each such $\vec{d}$, we consider the Weinstein embedding $X_{\vec{d}}^{2n} \ws X_{\vec{f}}^{2n}$ provided by Theorem~\ref{thm:embeddings}.
Note that this presents $X_{\vec{f}}^{2n}$ as the result after concatenating $X_{\vec{d}}^{2n}$ with a Weinstein cobordism $W$.
We define $\wt{X}_{\vec{d}}^{2n}$ to be the Weinstein domain obtained by concatenating $X_{\vec{d}}^{2n}$ with the flexibilization $\flex(W)$ of the Weinstein cobordism $W$ (see \cite[\S11.8]{cieliebak2012stein}).
Then $\wt{X}_{\vec{d}}^{2n}$ is almost symplectomorphic to $X_{\vec{f}}^{2n}$, and evidently there is a Weinstein embedding of
$X_{\vec{d}}^{2n}$ into $\wt{X}_{\vec{d}}^{2n}$.

To see that there is a Liouville embedding $\wt{X}_{\vec{d}}^{2n} \es X_{\vec{d}}^{2n}$, note that by our assumption on $\vec{f}$ there is a symplectic embedding $X_{\vec{f}}^{2n} \shookrightarrow X_{\vec{d}}^{2n}$ (up to Weinstein homotopy), given by adding back in some of the divisor components.
Now consider the result after concatenating the Weinstein embedding $X_{\vec{d}}^{2n} \ws X_{\vec{f}}^{2n}$ with the above symplectic embedding $X_{\vec{f}}^{2n} \shookrightarrow X_{\vec{d}}^{2n}$. 
This can be identified with the result after attaching to $X_{\vec{d}}^{2n}$ a finite piece of the symplectization of $\bdy X_{\vec{d}}^{2n}$.
We see that there are no formal obstructions to extending this to a Liouville embedding $X_{\vec{f}}^{2n} \es X_{\vec{d}}^{2n}$, so there are also no formal obstructions to extending it to a Liouville embedding $\wt{X}_{\vec{d}}^{2n} \es X_{\vec{d}}^{2n}$.
Since $\wt{X}_{\vec{d}}^{2n}$ is obtained from $X_{\vec{d}}^{2n}$ by attaching subcritical and flexible handles, 
we can apply the Lagrangian caps h principle \cite{eliashberg2013lagrangian} to produce a genuine Liouville embedding $\wt{X}_{\vec{d}}^{2n} \es X_{\vec{d}}^{2n}$.
\end{proof}

\begin{remark}\label{rmk:Liouville_vs_Weinstein_flex}
We do not know of any example of a Liouville domain which is diffeomorphic to a Weinstein domain but not symplectomorphic to any Weinstein domain. Still, Liouville embeddings between Weinstein domains are often more flexible than Weinstein embeddings.
For example, any $n \geq 3$, $\flex(D^*\mathbb{T}^n)$ admits a Liouville embedding into $\C^n$ (see \cite[Cor. 6.3]{eliashberg2013lagrangian}), but it does not admit a Weinstein embedding for (ordinary) homological reasons (i.e. Weinstein handle attachments cannot shrink the rank of the $n$th homology group).
More generally, if $X$ and $X'$ are Weinstein domains with $X \ws X'$, then the boundary connect sum of $X$ with sufficiently many copies of $\flex(D^*\mathbb{T}^n)$ still Liouville embeds into $X'$ (again using \cite{eliashberg2013lagrangian}), whereas a Weinstein embedding is impossible for homological reasons.
\end{remark}

\begin{remark}
If $X$ is a Weinstein domain, it is interesting to ask whether the invariant $\I^{\leq l}(X)$ can be computed from the wrapped Fukaya category $\wfuk(X)$ of $X$. 
By \cite{ganatra2019cyclic} together with generation results from \cite{chantraine2017geometric,ganatra2018structural}, the ``cyclic open-closed map'' gives an isomorphism from the cyclic homology of $\wfuk(X)$ to $\sh_{S^1}(X)$. After taking into account additional naturality properties it should be possible to recover the invariant $\F(X)$ from \S\ref{subsec:obs_cyls}.
However, it seems unlikely that we can recover the full invariant $\I^{\leq l}(X)$ solely from the $\mathcal{A}_\infty$ category $\wfuk(X)$ without any additional structure.
Rather, the $\Li$ structure on $\sc_{S^1}$ should be equivalent to an $\Li$ structure on cyclic chains of $\wfuk(X)$ which depends on a smooth Calabi--Yau structure on $\wfuk(X)$ (see e.g. \cite{chen2020gravity} for a somewhat analogous setup), the existence of which is guaranteed by  \cite{ganatra2019cyclic}. Under the cyclic open-closed map, we expect that this data is sufficient to recover the $\Li$ homomorphism ${P_0 \circ \delta_{S^1}: \sc^*_{S^1,+}(X) \rightarrow \K[u^{-1}]}$ (c.f. \S\ref{subsec:obs_higher}), which in turn determines an invariant which is closely analogous (and conjecturally equivalent up to an isomorphism of $\ovl{S}\K[u^{-1}]$) to $\I^{\leq l}(X)$.
Note that subcritical handle attachment does not change the wrapped Fukaya category (see \cite[\S1.7]{ganatra2018structural}), so this approach could potentially be used to determine the effect on $\I^{\leq l}(X)$ of subcritical handle attachment.
\end{remark}

\section{Possible extensions}\label{sec:concl}

In this brief conclusion we sample a few interesting directions for further research.

\subsubsection*{Allowing self-crossings}

One fairly mild generalization of Problem~\ref{prob:hyp} is to let $D$ be a normal crossing divisor which is not necessarily simple, i.e. the irreducible components are allowed to have self intersections. For example, the log Calabi--Yau surfaces studied in \cite{pascaleff2019symplectic} can all be viewed as complements of degree three curves in $\CP^2$ which are not necessarily smooth or irreducible. 
Although these examples do not quite fit into the framework of Theorem~\ref{thm:div_compl1}, it seems natural to expect the theorem to generalize to this case in light of \cite{tehrani2018normal}. 

More ambitiously, one could consider e.g. complements of (nongeneric) hyperplane arrangements in $\C^n$.
In this case it is interesting to ask to what extent the invariant $\I^{\leq l}$ is sensitive to the intersection poset of the arrangement.

\subsubsection*{More general hypersurface singularities}

It is also natural to consider divisors with more general hypersurface singularities. 
For instance, complements in $\C^n$ of vanishing loci of Brieskorn polynomials $P(z_1,\dots,z_n) = z_1^{a_1} + \dots + z_n^{a_n} \in \C[z_1,\dots,z_n]$ for $a_1,\dots a_n \in \Z_{\geq 2}$
play an essential role in McLean's constructions of symplectically exotic affine spaces \cite{mcleanlefschetz} (see also \cite[\S4b]{abouzaid2010altering}).
Concretely, if we take $P = z_1^2 + \dots + z_{n-1}^2 + z_n^3$ for $n \geq 4$ even or $P = z_1^2 + \dots + z_{n-2}^2 + z_{n-1}^3 + z_n^5$ for $n \geq 3$ odd,
then, after attaching a (subcritical) Weinstein two-handle, $\C^n \setminus P^{-1}(0)$ becomes diffeomorphic but not symplectomorphic to $\C^n$.
Furthermore, by counting idempotents in the symplectic cohomology algebra over $\Z/2$, McLean shows that $V_1,V_2,V_3,\dots$ are pairwise nonsymplectomorphic, where $V_k$ denotes the boundary connect sum of $k$ copies of $V$.
Evidently we have Weinstein embeddings $V_k \ws V_{k'}$ for $k \leq k'$, and the above suggests that $V_{k'}$ might be ``larger'' than $V_k$:
\begin{question}
Is there a Liouville embedding $V_k \es V_{k'}$ for $k > k'$?
\end{question}
\NI A starting point would be to compute $\G\lll \T^mp\rrr(\C^n \setminus P^{-1}(0))$ for $m \in \Z_{\geq 0}$, where $P(z_1,\dots,z_n)$ is a Brieskorn polynomial as above.
More broadly, what values can the invariant $\G\lll \T^m p \rrr$ assume on symplectically exotic affine spaces in a given dimension?

\subsubsection*{Other ambient spaces}
We are also interested in divisor complements in more general smooth projective varieties.
In fact, recall that every smooth complex affine algebraic variety is the complement of an ample simple normal crossing divisor in a smooth projective variety as a consequence of Hironaka's resolution of singularities theorem.
The neck stretching strategy in \S\ref{sec:computationsI},\S\ref{sec:computationsII} could plausibly extend to compute the invariant $\G\lll\T^mp\rrr$ for divisor complements in other smooth projective varieties $M$, after replacing $\gw_{\CP^n,[L]}\lll \T^{n-1}p\rrr$ with an appropriate invariant $\gw_{M,A}\lll \T^m p \rrr$ which is nonzero for some $A \in H_2(M)$ and $m \in \Z_{\geq 0}$.
At least in the case $n = 2$, the counts $\gw_{M,A}\lll \T^m p\rrr$ can be computed by the algorithm in \cite{McDuffSiegel_counting}. For example, for $M = \CP^1 \times \CP^1$ we have $\gw_{\CP^1 \times \CP^1,A}\lll \T^{2d}p\rrr = 1 \neq 0$ when $A = d[L_1] + [L_2]$ (i.e. we are counting curves of bidegree $(d,1)$) for $d \in \Z_{\geq 0}$.

\subsubsection*{Higher genus analogues}

On the other hand, there are also examples having no apparent rational curves whatsoever.
For instance, suppose that $M$ is a product of $n$ closed Riemann surfaces, each having genus at least one.
In this case we have $\pi_2(M) = 0$, and hence $\gw_{M,A}\lll\T^{m}p\rrr = 0$ for all $m \in \Z_{\geq 0}$ and all $A \in H_2(M)$.
In fact, if $D \subset M$ is an ample simple normal crossing divisor and $X = M \setminus \Op(D)$ denotes the corresponding divisor complement (endowed with a contact form as in Theorem~\ref{thm:div_compl1}), then there cannot be any nonconstant asymptotically cylindrical pseudoholomorphic curves of genus zero in $X$.
Note that this holds by purely topological considerations, i.e. such a curve could be capped off to give a homologically essential map $S^2 \rightarrow M$, which is a contradiction.
In particular, the obstructions defined in this paper are vacuous in this case. It is possible that analogous invariants defined using higher genus curves could provide more refined obstructions.

As a special case, we consider the Liouville domains given by products of Riemann surfaces with boundary. Note that there are obvious symplectic embeddings given by filling in some of the boundary components.
In dimension $2n=2$, Example~\ref{ex:surfaces} provides complete Liouville embedding obstructions by purely elementary considerations, but this does not extend to higher dimensions.

\subsubsection*{Cotangent bundles}
Another interesting direction is to compute $\I^{\leq l}$ for cotangent bundles $T^*Q$ of closed smooth manifolds $Q$.
This case has implications for (exact) Lagrangian embeddings.
As a starting point, recall that if $Q$ admits a Riemannian metric of negative sectional curvature, then the unit sphere bundle $S^*Q$ admits a corresponding contact form all of whose Reeb orbits have Conley--Zehnder index zero.
For $\dim(Q) \geq 3$, it follows that any formal curve in $D^*Q$ without constraints has nonpositive index (c.f. \cite[Cor. 1.7.4]{EGH2000}), and hence has negative index after imposing a point constraint, so we have $\G\lll\T^m p\rrr(D^*Q) = \infty$ for any $m \in \Z_{\geq 0}$, and in fact:
\begin{prop}
Let $X^{2n \geq 6}$ be a Liouville domain such that $\G\lll \T^mp\rrr(X) < \infty$ for some $m \in \Z_{\geq 0}$.
Then $X$ does not have any embedded Lagrangian submanifold which admits a Riemannian metric of negative sectional curvature.
\end{prop}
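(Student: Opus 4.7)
The plan is to reduce via the Weinstein Lagrangian neighborhood theorem to the cotangent disk bundle of $L$ and then show $\G\lll \T^m p\rrr$ of the latter is infinite by a direct Fredholm index calculation. Concretely, if $L \subset X$ is an embedded Lagrangian carrying a Riemannian metric $g$ of negative sectional curvature, then a small cotangent disk bundle $D^*_\eps L$, endowed with the Liouville structure induced by $g$, Liouville embeds into $X$. Monotonicity of $\G \lll \T^m p \rrr$ gives $\G\lll \T^m p \rrr(D^*_\eps L) \leq \G\lll \T^m p \rrr(X) < \infty$, so it suffices to prove $\G\lll \T^m p \rrr(D^*_\eps L) = \infty$ whenever $L$ has $n := \dim L \geq 3$ and admits such a metric.

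For the geometric input, equip $S^*_\eps L$ with the contact form induced by $g$, whose Reeb orbits correspond to closed geodesics on $L$. As noted in the paragraph preceding the proposition, after a small Morse--Bott perturbation making the contact form nondegenerate, every Reeb orbit $\gamma$ (including all iterates, which remain hyperbolic with vanishing Morse index in their free homotopy classes) satisfies $\cz_\tau(\gamma) = 0$, where $\tau$ is the framing of $\xi|_\gamma$ coming from the vertical Lagrangian distribution. Since this same vertical distribution trivializes the canonical bundle of $T^*L$, every formal curve $u$ in $D^*_\eps L$ has $c_1^\tau(u) = 0$.

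For the index estimate, let $u$ be any formal curve in $D^*_\eps L$ with $s^+ \geq 1$ positive punctures and no negative punctures (since $D^*_\eps L$ is a filling), carrying a constraint $\lll \T^m p \rrr$ of codimension $2n + 2m - 2$. The Fredholm index formula collapses to
\[
\ind(u) = (n-3)(2-s^+) - (2n + 2m - 2),
\]
and a case analysis in $s^+$ shows $\ind(u) \leq -4 - 2m < 0$ for every $n \geq 3$. Since this holds for all asymptotics and all covering multiplicities, no rigid curves can contribute to any $\aug^k \lll \T^m p \rrr$; the $t^m$-component of $\pi_1 \circ \wh{\aug}\lll \TT p \rrr(x)$ vanishes for every bar cycle $x \in \bar\cclin(D^*_\eps L)$, so $t^m \notin \pi_1(\I^{\leq l}(D^*_\eps L))$ for any $l$, yielding $\G\lll \T^m p \rrr(D^*_\eps L) = \infty$.

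The main delicacy is the geometric input on the Reeb spectrum and Conley--Zehnder indices, which rests on classical facts about closed geodesics in negative sectional curvature together with the Viterbo framing convention for cotangent bundles. Because the conclusion is a \emph{nonexistence} statement for rigid curves rather than the computation of any count, no virtual perturbation input is required; the argument uses only the definition of $\G\lll \T^m p \rrr$ together with the Fredholm index bookkeeping.
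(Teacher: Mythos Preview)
Your proof is correct and follows exactly the approach the paper sketches in the paragraph immediately preceding the proposition: reduce via a Weinstein neighborhood and monotonicity of $\G\lll\T^m p\rrr$ to $D^*_\eps L$, then use $\cz_\tau \equiv 0$ and $c_1^\tau \equiv 0$ for the geodesic contact form to force every formal curve with a $\lll\T^m p\rrr$ constraint to have negative index (your bound $\ind(u)\le -4-2m$ is sharp at $n=3$, $s^+\ge 2$). One point worth making explicit is that the Weinstein neighborhood gives a \emph{Liouville} (rather than merely symplectic) embedding $D^*_\eps L \es X$ only when $L$ is exact---this is implicit in the paper's context (note the parenthetical ``(exact)'' in the sentence just before the proposition) but is not literally part of the hypothesis as stated.
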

\NI In light of the discussion in \S\ref{sec:invariants} and the relationship between symplectic cohomology and string topology (see e.g. \cite{abouzaid2013symplectic,cohencalabi}), the invariants defined in this paper for cotangent bundles should be computable using techniques from rational homotopy theory.

\bibliographystyle{math}

\bibliography{liouvillecomplexity}

\end{document}